\documentclass[a4, 12pt]{amsart}
\usepackage{amssymb}
\usepackage{amstext}
\usepackage{amsmath}
\usepackage{amscd}
\usepackage{latexsym}
\usepackage{amsfonts}

\theoremstyle{plain}
\newtheorem{thm}{Theorem}[section]
\newtheorem*{thm*}{Theorem}
\newtheorem*{cor*}{Corollary}

\newtheorem{prop}[thm]{Proposition}
\newtheorem{lem}[thm]{Lemma}
\newtheorem{cor}[thm]{Corollary}
\newtheorem{claim}{Claim}
\newtheorem*{claim*}{Claim}

\theoremstyle{definition}
\newtheorem{defn}[thm]{Definition}

\newtheorem{rem}[thm]{Remark}

\theoremstyle{remark}

\numberwithin{equation}{thm}

\def\Coker{\mathrm{Coker}}

\def\rank{\mathrm{rank}}
\def\a{\mathfrak a}
\def\b{\mathfrak b}
\def\e{\mathrm{e}}
\def\m{\mathfrak m}

\def\p{\mathfrak p}

\def\Z{\Bbb Z}

\def\H{\mathrm{H}}

\newcommand{\rmC}{\mathrm{C}}

\newcommand{\rmG}{\mathrm{G}}

\newcommand{\rmR}{\mathrm{R}}
\newcommand{\rmS}{\mathrm{S}}

\newcommand{\calF}{\mathcal{F}}

\newcommand{\calK}{\mathcal{K}}

\newcommand{\fkm}{\mathfrak{m}}

\def\depth{\mathrm{depth}}

\def\Ann{\mathrm{Ann}}
\def\Ass{\mathrm{Ass}}

\def\height{\mathrm{ht}}

\tolerance=9999

\setlength{\oddsidemargin}{1.1mm}
\setlength{\evensidemargin}{1.1mm}
\setlength{\topmargin}{-1.cm}
\setlength{\headheight}{1.2cm}
\setlength{\headsep}{1.0cm}
\setlength{\textwidth}{15.6cm}
\setlength{\textheight}{21.5cm}

\newcommand{\mapright}[1]{%
\smash{\mathop{%
\hbox to 1cm{\rightarrowfill}}\limits^{#1}}}

\newcommand{\mapleft}[1]{%
\smash{\mathop{%
\hbox to 1cm{\leftarrowfill}}\limits_{#1}}}

\begin{document}

\setlength{\baselineskip}{12.7pt}
\title{The structure of the Sally module of\\ integrally closed ideals}
\pagestyle{plain}
\author{Kazuho Ozeki}
\address{Department of Mathematical Science, Faculty of Science, Yamaguchi University, 1677-1 Yoshida, Yamaguchi 753-8512, Japan}
\email{ozeki@yamaguchi-u.ac.jp}
\author{Maria Evelina Rossi}
\address{Dipartimento di Matematica, Universita di Genova, Via Dodecaneso, 35-16146, Genova, Italy}
\email{rossim@dima.unige.it}
\dedicatory{To Shiro Goto on the occasion of his seventieth birthday}
\thanks{{\it Key words and phrases:}
Cohen-Macaulay local ring, associated graded ring, Hilbert function, Hilbert coefficient
\endgraf
{\it 2010 Mathematics Subject Classification:}
13D40, 13A30, 13H10.}
\maketitle
\begin{abstract}
The first two Hilbert coefficients of a primary ideal play an important role in commutative algebra and in algebraic geometry. In this paper we give a complete algebraic structure   of the  Sally module  of  integrally closed ideals  $I$ in a Cohen-Macaulay local ring $A$ satisfying the equality $\e_1(I)=\e_0(I)-\ell_A(A/I)+\ell_A(I^2/QI)+1, $ where $Q$ is a minimal reduction of $I$, and $\e_0(I)$ and $\e_1(I)$ denote the first two Hilbert coefficients of $I, $ respectively the multiplicity and the Chern number of $I.$ This almost extremal value of $\e_1(I) $ with respect classical inequalities holds a complete description of the homological and the numerical invariants  of the associated graded ring. Examples are given.
\end{abstract}


\section{Introduction and notation }

Throughout this paper, let $A$ denote a Cohen-Macaulay local ring with maximal ideal $\m$ and positive dimension $d.$ 
 Let $I$ be an $\m$-primary ideal in $A$  and,  for simplicity,  we assume the residue class field $A/\m$ is infinite.
Let $\ell_A(N)$ denote, for an $A$-module $N$, the length of $N$.
The  integers $\{\e_i(I)\}_{0 \leq i \leq d}$ such that the equality
$$\ell_A(A/I^{n+1})={\e}_0(I)\binom{n+d}{d}-{\e_1}(I)\binom{n+d-1}{d-1}+\cdots+(-1)^d{\e}_d(I)$$
holds true for all integers $n \gg 0$, are called  the {\it Hilbert coefficients} of $A$ with respect to $I$.  This polynomial, known as the Hilbert-Samuel  polynomial of $I $  and denoted by $HP_I(n), $  encodes the asymptotic information coming from the Hilbert function  $H_I(t)$
of $I$ which  is defined as
$$H_I(t) =\ell_A(I^t/I^{t+1}). $$
The generating function
of the numerical function $H_I(t) $ is the power series $$\ \ HS_I(z)=\sum_{t\ge
0}H_I(t)z^t.$$ This series is
called the Hilbert   series of $I.$ It is well known that this
series is rational and that, even
more, there exists a polynomial $h_I(z)$ with integers coefficients such that
$h_I(1)\not= 0$ and
$$  HS_I(z)=\frac {h_I(z)}{(1-z)^d}.$$
 Notice that for all  $i\ge 0, $ the Hilbert coefficients can be computed as it follows $$\e_i(I):=\frac{h_I^{(i)}(1)}{i!}  $$
 where $h_I^{(i)}$ denotes the $i-$th derivative of $h_I(z) $ evaluated at $i$  ($h^{(0)}=h_I$).

\noindent Choose a parameter ideal $Q $ of $A$ which forms a reduction of $I$ and 
let
$$ R=\rmR(I) := A[I t] \ \ ~~\operatorname{and}~~ \ T= \rmR(Q):= A[Qt] \ \ \subseteq A[t]$$
denote, respectively, the Rees algebras of $I$ and $Q$. Let
$$ R'=\rmR'(I) := A[I t,t^{-1}] \ \subseteq A[t,t^{-1}] \ \ \ ~~\operatorname{and}~~ \ G=\rmG(I):= R'/t^{-1}R' \cong \bigoplus_{n \geq 0}I^n/I^{n+1}.$$
Following Vasconcelos \cite{V}, we consider 
$$S=\rmS_Q(I)=IR/IT \cong \bigoplus_{n \geq 1}I^{n+1}/Q^nI$$ 
the Sally module of $I$ with respect to $Q$.

\vskip 2mm
The notion of  {\it{ filtration of the Sally module}}  was introduced by M. Vaz Pinto \cite{VP} as follows.
We denote by $E(\alpha)$, for a graded $T$-module $E$ and each $\alpha \in \Z$, the graded $T$-module whose grading is given by $[E(\alpha)]_n=E_{\alpha+n}$ for all $n \in \Z$.

\begin{defn}\label{VP}(\cite{VP})
We set, for each $i \geq 1$,
$$ C^{(i)}=(I^iR/I^iT)(-i+1) \cong \bigoplus_{n \geq i} I^{n+1}/Q^{n-i+1}I^{i}. $$
and let $L^{(i)}=  T[C^{(i)}]_i$.
Then, because $L^{(i)} \cong \bigoplus_{n \geq i} Q^{n-i}I^{i+1}/Q^{n-i+1}I^i$ and $C^{(i)}/L^{(i)} \cong C^{(i+1)}$ as graded $T$-modules, we have the following natural exact sequences of graded $T$-modules
$$ 0 \to L^{(i)} \to C^{(i)} \to C^{(i+1)} \to 0 $$
for every $i \ge 1.$ 
\end{defn}
We notice that $C^{(1)}=S$, and $C^{(i)}$ are finitely generated graded $T$-modules for all $i \geq 1$, since $R$ is a module-finite extension of the graded ring $T$.
 
So,  from now on,   we set
$$C = \rmC_Q(I)=C^{(2)}= (I^2R/I^2T)(-1) $$
and  we shall explore the structure of   $C. $  
Assume  that $I$ is integrally closed. 
Then, by \cite{EV, GR}, the inequality
$$ \e_1(I) \geq \e_0(I) -\ell_A(A/I) + \ell_A(I^2/QI)$$
holds true and the equality $ \e_1(I) = \e_0(I)-\ell_A(A/I)+ \ell_A(I^2/QI)$ holds  if and only if $I^3=QI^2$.
When this is the case, the associated graded ring $G$ of $I$ is Cohen-Macaulay and the behavior of the Hilbert-Samuel  function $\ell_A(A/I^{n+1})$ of $I$ is known (see \cite{EV}, Corollary \ref{EV}).
Thus the integrally closed ideal $I$ with $ \e_1(I) = \e_0(I)-\ell_A(A/I)+ \ell_A(I^2/QI)$ enjoys nice properties and it seems natural to ask what happens on the integrally closed ideal $I$ which satisfies  the equality $ \e_1(I) = \e_0(I)-\ell_A(A/I)+ \ell_A(I^2/QI)+1$.
The problem is not trivial even if we consider $d=1.$ 

\vskip 2mm
We notice here that $\ell_A(I^2/QI)=\e_0(I)+(d-1)\ell_A(A/I)-\ell_A(I/I^2)$ holds true (see for instance \cite{RV3}), so that  $\ell_A(I^2/QI)$ does not depend on a minimal reduction $Q$ of $I$.
 
\vskip 2mm
 Let $B=T/\m T \cong (A/\m)[X_1,X_2,\cdots,X_d]$ which is a  polynomial ring with $d$ indeterminates over the field $A/\m$. The main result of this paper is stated as follows.

\begin{thm}\label{main1}
Assume that $I$ is integrally closed.
Then the following  conditions are equivalent: 
\begin{itemize}
\item[(1)] $\e_1(I)=\e_0(I)-\ell_A(A/I)+\ell_A(I^2/QI)+1$,
\item[(2)] $\m C=(0)$ and $\rank_B C=1$,  
\item[(3)] $C \cong (X_1,X_2,\cdots,X_c)B(-1)$ as graded $T$-modules for some $1 \leq c \leq d$, where $X_1,X_2,\cdots,X_d$ are linearly independent linear forms of the polynomial ring $B$.
\end{itemize}
When this is the case, $c=\ell_A(I^3/QI^2)$ and $I^4=QI^3$, and the following assertions hold true: 
\begin{itemize}
\item[(i)] $\depth ~G \geq d-c$ and $\depth_TC=d-c+1$,
\item[(ii)] $\depth~ G=d-c$, if $c \geq 2$.
\item[(iii)] Suppose $c=1 <d$. Then  $HP_I(n) = \ell_A(A/I^{n+1}) $ for all $n \geq 0$ and 

\[ \e_i(I) =  \left\{
\begin{array}{ll}
\e_1(I)-\e_0(I)+\ell_A(A/I)+1 & \quad \mbox{if $i=2 $,} \\
1 & \quad \mbox{if $i =3$ and  $d \ge 3$,} \\
0 & \quad \mbox{if $4 \le i \le d.$}
\end{array}
\right.\]
 
\item[(iv)] Suppose $2 \leq c < d$. Then  $HP_I(n) = \ell_A(A/I^{n+1}) $ for all $n \geq 0$ and 

\[ \e_i(I) =  \left\{
\begin{array}{ll}
\e_1(I)-\e_0(I)+\ell_A(A/I) & \quad \mbox{if $i=2 $,} \\
0 & \quad \mbox{if $i \neq c+1, c+2 $,\ \ $3 \leq i \leq d$} \\
(-1)^{c+1} & \quad \mbox{if $i =c+1, c+2$,\ \  $3 \leq i \leq d$}
\end{array}
\right.\]

 \item[(v)] Suppose $c=d$. Then  $HP_I(n) = \ell_A(A/I^{n+1}) $ for all $n \geq 2 $ and 

\[ \e_i(I) =  \left\{
\begin{array}{ll}
\e_1(I)-\e_0(I)+\ell_A(A/I) & \quad \mbox{if $i=2$ and $d \geq 2$,} \\
0 & \quad \mbox{if  $3 \leq i \leq d$}  
\end{array}
\right.\]
 
 \item[(vi)] The Hilbert series $ HS_I(z)$   is given by {\small { $$ HS_I(z)=\frac{\ell_A(A/I)+\{\e_0(I)-\ell_A(A/I)-\ell_A(I^2/QI)-1\}z+\{\ell_A(I^2/QI)+1\}z^2+(1-z)^{c+1}z}{(1-z)^d}.$$}} 
\end{itemize}
\end{thm}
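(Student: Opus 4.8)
The plan is to route everything through the graded $T$-module $C$, translating the arithmetic equality in (1) into the structure of $C$ and then reading (i)--(vi) off from that structure. The preliminary bookkeeping is standard: intersecting $I^{n+1}$ with $Q^{n+1}$ and using that $\mathrm{gr}_Q(A)\cong(A/Q)[X_1,\dots,X_d]$ is a polynomial ring yields, for all $n\ge 0$,
$$\ell_A(A/I^{n+1})=\e_0(I)\binom{n+d}{d}-\bigl(\e_0(I)-\ell_A(A/I)\bigr)\binom{n+d-1}{d-1}-\ell_A([S]_n),$$
hence $\e_1(I)=\e_0(I)-\ell_A(A/I)+\e_0(S)$, where $\e_0(S)$ denotes the multiplicity of the $d$-dimensional module $S$ (interpreted as $0$ if $S=0$). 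Integral closedness first enters through Itoh's theorem: since $\overline{Q}=\overline{I}=I$ one gets $Q\cap I^2=QI$, which makes the canonical surjection $\mathrm{gr}_Q(A)(-1)\otimes_{A/Q}(I^2/QI)\to L^{(1)}$ an isomorphism; feeding this into $0\to L^{(1)}\to S\to C\to 0$ gives $\e_0(L^{(1)})=\ell_A(I^2/QI)$ and therefore $\e_1(I)=\e_0(I)-\ell_A(A/I)+\ell_A(I^2/QI)+\e_0(C)$ (with $\e_0(C):=0$ when $\dim_TC<d$). Thus (1) is equivalent to $\dim_TC=d$ together with $\e_0(C)=1$; since $\e_0(B)=1$, and since $\e_0(C)=\rank_BC$ as soon as $\m C=(0)$, the whole problem reduces to controlling the $\m$-action on $C$.

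The implication $(1)\Rightarrow(2),(3)$ is the heart of the matter. From (1) one first gets $C\ne 0$ (otherwise $I^3=QI^2$ and Corollary \ref{EV} contradicts (1)) and $\e_0(C)=1$. The crucial step, and the one I expect to be the real obstacle, is to prove under integral closedness the two structural facts $\m C=(0)$ and $I^4=QI^3$; both should follow from Itoh/Huneke-type vanishing for $\m I^{n+1}$ and for $I^{n+1}$ modulo $Q^{n-1}I^2$ — the same circle of ideas behind the inequality $\e_1(I)\ge\e_0(I)-\ell_A(A/I)+\ell_A(I^2/QI)$ of \cite{EV,GR} — with the constraint $\e_0(C)=1$ being exactly what rules out an extra $\m$-torsion layer or a minimal generator of $C$ in degree $\ge 3$. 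Granting these, $C$ is a graded $B$-module, generated in degree $2$ (since $I^4=QI^3$ forces $[C]_{n+1}=(Qt)\,[C]_n$ for all $n\ge 2$), of dimension $d$ and multiplicity $1$; because $\Ass_TC\subseteq\{\m T\}$ (the standard fact for the modules in the filtration of Definition \ref{VP}) and $B$ is a domain, $C$ is torsion-free of rank one over $B$, hence isomorphic as a graded module to a nonzero graded ideal of $B$ up to shift, and being generated in degree $2$ it must be $(X_1,\dots,X_c)B(-1)$ with $c=\dim_{A/\m}[C]_2=\ell_A(I^3/QI^2)$. This is (3). The converses are formal: $(X_1,\dots,X_c)B(-1)$ is visibly $\m$-annihilated of $B$-rank one, giving $(3)\Rightarrow(2)$, and substituting $\m C=(0)$ and $\rank_BC=1$ back into the translation above returns (1), giving $(2)\Rightarrow(1)$.

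For the numerical assertions I would compute the Hilbert series of $I$ directly from the structure of $C$. Summing the length formula gives $HS_I(z)=\e_0(I)/(1-z)^d-(\e_0(I)-\ell_A(A/I))/(1-z)^{d-1}-(1-z)\,HS_S(z)$, with $HS_S(z)=HS_{L^{(1)}}(z)+HS_C(z)$, $HS_{L^{(1)}}(z)=\ell_A(I^2/QI)\,z/(1-z)^d$, and, from $C\cong(X_1,\dots,X_c)B(-1)$ together with $HS_{B/(X_1,\dots,X_c)B}(z)=1/(1-z)^{d-c}$, $HS_C(z)=z\bigl(1-(1-z)^c\bigr)/(1-z)^d$; collecting terms produces exactly the expression in (vi). The values of $\e_i(I)$ in (iii)--(v) then follow by expanding the $h$-polynomial $h_I(z)$ and using $\e_i(I)=\sum_m h_m\binom{m}{i}$, and the ranges in which $HP_I(n)=\ell_A(A/I^{n+1})$ are governed by the degree of the polynomial correction term $z(1-z)^{c+1}$ in the numerator. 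As for depth: since $\m C=(0)$, $\depth_TC$ equals the depth of the module $(X_1,\dots,X_c)B$ over the polynomial ring $B$, which is $d-c+1$ by the depth lemma applied to $0\to(X_1,\dots,X_c)B\to B\to B/(X_1,\dots,X_c)B\to 0$ (here $1\le c\le d$, so the quotient has depth $d-c<d=\depth B$). Passing from $C$ to $G$, I would use the standard comparison of $\depth\mathrm{gr}_I(A)$ with $\depth_TS$, together with $0\to L^{(1)}\to S\to C\to 0$ and $\depth_TL^{(1)}=d$: this gives $\depth G\ge d-c$ always and $\depth G=d-c$ precisely when $\depth_TC=d-c+1<d$, i.e. when $c\ge 2$, while for $c=1$ it yields only $\depth G\ge d-1$.

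To summarize the strategy: once the single genuine obstacle — proving $\m C=(0)$ and $I^4=QI^3$ under hypothesis (1) — is overcome via the integral closedness of $I$ (through Itoh's $\overline{Q^{n+1}}\cap Q=Q\overline{Q^{n}}$ and length estimates in the spirit of \cite{EV,GR}), the module $C$ is pinned down as $(X_1,\dots,X_c)B(-1)$ and all of (i)--(vi) become routine manipulations with the filtration of Definition \ref{VP} and with modules over the polynomial ring $B$.
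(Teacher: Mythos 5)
Your overall framework is the same as the paper's: translate (1) into $\ell_{T_{\m T}}(C_{\m T})=1$ via $Q\cap I^2=QI$ and the isomorphism $L\cong D(-1)$ (the paper's Propositions \ref{function}, \ref{multi} and Lemma \ref{L1}), then read (iii)--(vi) and the depth statements off the resolution $0\to(X_1,\dots,X_c)B\to B\to B/(X_1,\dots,X_c)B\to 0$ as in Proposition \ref{function+}. But at the decisive point you only assert that $\m C=(0)$ and $I^4=QI^3$ ``should follow from Itoh/Huneke-type vanishing,'' with the constraint $\e_0(C)=1$ supposedly ruling out a minimal generator of $C$ in degree $\ge 3$. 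That is not a proof, and the heuristic is false as stated: $(X_1,\dots,X_{d-1},X_d^{2})B(-1)$ and $(X_1^2,X_1X_2,X_2^2)B$ are graded, $\m$-annihilated, torsion-free rank-one $B$-modules vanishing in degrees $\le 1$, and nothing in your argument excludes $C$ from being one of these; excluding precisely the first of them is the hardest step of the paper. Two separate inputs are missing. First, the shift: from $\Ass_TC=\{\m T\}$ you only get $C\cong\a(r)$ for a graded ideal $\a$ and some $r\ge -1$, and even knowing $C$ is generated in degree $2$ does not force $r=-1$ (the second example above has $r=0$). The paper pins $r=-1$ in Theorem \ref{rank1} by computing $\e_2(I)=\ell_A(I^2/QI)-r$ from Proposition \ref{function} and invoking Itoh's inequality $\e_2(I)\ge\e_1(I)-\e_0(I)+\ell_A(A/I)$ for integrally closed ideals (Lemma \ref{e2}); your proposal never uses any $\e_2$ bound. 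Second, $I^4=QI^3$ is Theorem \ref{m4}, proved by induction on $d$: pass to $A/(a_1)$ with $a_1$ superficial so that $I/(a_1)$ remains integrally closed, use the Sally machine if $\depth \rmG(I/(a_1))>0$, and otherwise deduce $\a\supseteq(Y_1,\dots,Y_{d-1})$, hence $\a=(Y_1,\dots,Y_{d-1},Y_d^{\ell})$ with $\ell\le 2$ by a Nakayama argument, and finally eliminate $\ell=2$ by an explicit computation with representatives $z_i\in I^3$, $z_d\in I^4$ using Lemma \ref{cap} and $Q^2\cap I^3=Q^2I$ (Itoh/Huneke). None of this is delivered by ``the same circle of ideas behind the inequality of \cite{EV,GR}.''

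A secondary gap concerns (ii) for $c=2$. With $\depth_TL=d$ and $\depth_TC=d-1$, the depth lemma applied to $0\to L\to S\to C\to 0$ does not force $\depth_TS=d-1$: a priori $S$ could be Cohen--Macaulay, which would only give $\depth G\ge d-1$. The paper handles this case separately in Proposition \ref{function+}: assuming $\depth G\ge d-1$ one gets $S$ Cohen--Macaulay, hence a monomorphism $\H_{M}^{d-1}(C)\hookrightarrow\H_{M}^{d}(L)$ for the graded maximal ideal $M$ of $T$, which is impossible because $[\H_{M}^{d-1}(C)]_{-d+3}\neq(0)$ while $[\H_{M}^{d}(L)]_{n}=(0)$ for $n\ge -d+2$. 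Your phrase ``precisely when $c\ge 2$'' glosses over this. The remaining numerical assertions (iii)--(vi), and the depth claims for $c\neq 2$, are fine once (3) and $I^4=QI^3$ are granted, and there they coincide with the paper's computation.
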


\vskip 3mm

Let us briefly explain how this paper is organized.
We shall prove Theorem \ref{main1} in Section 3.
In Section 2 we will introduce some auxiliary results on the structure of the $T$-module  $C=\rmC_Q(I)=(I^2R/I^2T)(-1)$, some of them are stated in a general setting.    Our hope is that these information will be successfully applied to give new insights in problems related to the structure of Sally's module.    In Section 4 we will introduce some consequences of Theorem \ref{main1}. In particular 
we shall explore the integrally closed ideals  $I$ with $\e_1(I) \le \e_0(I)-\ell_A(A/I)+3. $     
 In Section 5 we will construct a class of Cohen-Macaulay  local rings satisfying  condition (1) in Theorem \ref{main1}.

\section{Preliminary Steps}

The purpose of this section is to summarize some  results on the structure of the graded $T$-module  $C=\rmC_Q(I)=(I^2R/I^2T)(-1)$, which we need throughout this paper. Remark that in this section $I$ is an $\m$-primary ideal not necessarily integrally closed.  


\begin{lem}\label{fact1}
The following assertions hold true.
\begin{itemize}
\item[$(1)$] $\m^{\ell} C = (0)$ for integers $\ell \gg 0$; hence ${\dim}_TC \leq d$.
\item[$(2)$]The homogeneous components $\{ C_n \}_{n \in \Z}$ of the graded $T$-module $C$ are given by
\[ C_n \cong  \left\{
\begin{array}{rl}
(0) & \quad \mbox{if $n \leq 1 $,} \\
I^{n+1}/Q^{n-1}I^2 & \quad \mbox{if $n \geq 2$.}
\end{array}
\right.\]
\item[$(3)$] $C=(0)$ if and only if $I^3=QI^2$.
\item[$(4)$] $\m C=(0)$ if and only if $\m I^{n+1} \subseteq Q^{n-1}I^2$ for all $n \geq 2$.
\item[$(5)$] $S = TC_2$ if and only if $I^4 = QI^3$. 
\end{itemize}
\end{lem}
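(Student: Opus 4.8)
The plan is to derive all five assertions from a single computation of the graded pieces of $C=C^{(2)}=(I^2R/I^2T)(-1)$, together with two elementary facts. The first is a persistence property of reductions: if $I^{m+1}=QI^m$ for some $m\geq 1$, then $I^{n+1}=Q^{n-m+1}I^m$ for all $n\geq m$; this is proved by multiplying the hypothesis by $I$ and inducting on $n$, using $QI^j\subseteq I^{j+1}$. The second is the trivial remark that, because $C$ (and $C^{(3)}$) have no components in negative degrees, being zero or being annihilated by $\m$ can be tested degree by degree. Nothing here is deep; part (2) carries the whole weight, and the rest is bookkeeping for later use.

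First I would prove (2) directly from the definitions. Inside $A[t]$ one has $R=\bigoplus_{m\geq 0}I^mt^m$ and $T=\bigoplus_{m\geq 0}Q^mt^m$, hence $I^2R=\bigoplus_{m\geq 0}I^{m+2}t^m$ and $I^2T=\bigoplus_{m\geq 0}Q^mI^2t^m$; since $Q^mI^2\subseteq I^{m+2}$, the degree-$m$ component of $I^2R/I^2T$ is $I^{m+2}/Q^mI^2$. Applying the shift $(-1)$, the degree-$n$ component of $C$ equals $I^{n+1}/Q^{n-1}I^2$ for $n\geq 1$; it is $I^2/I^2=(0)$ when $n=1$, and $(0)$ for $n\leq 0$. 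This proves (2).

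For (1): $C$ is a finitely generated graded $T$-module (as already noted, because $R$ is module-finite over the Noetherian ring $T$), so $C=TM$ for a finitely generated graded $A$-submodule $M=\bigoplus_{n=2}^{N}C_n$. By (2), each $C_n=I^{n+1}/Q^{n-1}I^2$ with $2\leq n\leq N$ is a quotient of one $\m$-primary ideal by another, hence of finite length over $A$; so $M$ has finite length and $\m^{\ell}M=(0)$ for $\ell\gg 0$. Since $\m\subseteq A\subseteq T$, this gives $\m^{\ell}C=\m^{\ell}TM=T\m^{\ell}M=(0)$, whence $\Supp_TC\subseteq V(\m^{\ell}T)=V(\m T)$; and since $T/\m T\cong B$ has dimension $d$ we conclude $\dim_TC\leq d$.

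Parts (3)--(5) are then formal. By (2), $C=(0)$ iff $I^{n+1}=Q^{n-1}I^2$ for all $n\geq 2$; the case $n=2$ gives one implication, and $I^3=QI^2$ gives the whole family by the persistence fact, proving (3). For (4): again by (2), $\m C=(0)$ iff $\m C_n=(0)$ for all $n$, and for $n\geq 2$ one has $\m C_n=(\m I^{n+1}+Q^{n-1}I^2)/Q^{n-1}I^2$, which vanishes exactly when $\m I^{n+1}\subseteq Q^{n-1}I^2$ (for $n\leq 1$ there is nothing to check). For (5): from the exact sequence $0\to L^{(2)}\to C^{(2)}\to C^{(3)}\to 0$ of Definition~\ref{VP}, with $L^{(2)}=T[C^{(2)}]_2=TC_2$ and $C^{(2)}=C$, the equality $C=TC_2$ is equivalent to $C^{(3)}=(0)$; the same computation as in (2) shows $[C^{(3)}]_n=I^{n+1}/Q^{n-2}I^3$ for $n\geq 3$ and $(0)$ otherwise, so the argument of (3) (with $I^3$ in place of $I^2$) yields $C^{(3)}=(0)\iff I^4=QI^3$. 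The only points that need any care are the grading/shift bookkeeping in (2) and, in (1), the observation that $\sqrt{\m^{\ell}T}=\sqrt{\m T}$, so that $\dim V(\m^{\ell}T)=\dim B=d$.
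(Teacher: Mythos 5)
Your proposal is correct, and for parts (2)--(5) it coincides with the paper's argument: the paper proves (2) by exactly the same computation of the graded pieces of $I^2R/I^2T$ and then dismisses (3)--(5) as immediate consequences; your persistence lemma ($I^{m+1}=QI^m$ implies $I^{n+1}=Q^{n-m+1}I^m$ for $n\geq m$) and the identification $[C^{(3)}]_n\cong I^{n+1}/Q^{n-2}I^3$ are precisely the implicit content there. The only genuine divergence is in (1): the paper passes to the extended Rees algebras, notes that $I^2\rmR'(I)/I^2\rmR'(Q)$ is a finitely generated graded module vanishing in degrees $\leq 0$, so it is killed by $u^{\ell}$ with $u=t^{-1}$, and then transfers this to $\m$ via $Q^{\ell}=(Qt^{\ell})u^{\ell}\subseteq u^{\ell}\rmR'(Q)\cap A$ and $\m=\sqrt{Q}$; you instead observe that $C$ is generated by its components in finitely many degrees, each of which is a finite-length $A$-module because $Q^{n-1}I^2$ is $\m$-primary, so a single power of $\m$ kills all generators. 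Both are short and correct; yours is slightly more elementary (no extended Rees algebra), while the paper's argument is the standard one for such ``positively graded, finitely generated'' annihilation statements. One small remark on (5): you prove $C=TC_2\iff I^4=QI^3$, whereas the statement literally says $S=TC_2$; since $C_2$ is only a quotient of $S_2$, the printed ``$S$'' is evidently a slip for ``$C$'', and your reading is the one the paper itself uses later (in Proposition \ref{function+} and Theorem \ref{m4}), so this is not a gap.
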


\begin{proof}
(1) Let $u = t^{-1}$ and $T'=\rmR'(Q)$.
Notice that $C = (I^2R/I^2T)(-1) \cong (I^2R'/I^2T')(-1)$ as graded $T$-modules. 
We then have $u^{\ell}{\cdot}(I^2R'/I^2T') = (0)$ for some $\ell \gg 0$, because the graded $T'$-module $I^2R'/I^2T'$ is finitely generated and $[I^2R'/I^2T']_n =(0)$ for all $n \leq 0$.
Therefore, $\m^{\ell}C = (0)$ for $\ell \gg 0$, because $Q^{\ell} = (Qt^{\ell})u^{\ell} \subseteq u^{\ell}T' \cap A$ and $\m = \sqrt{Q}$.

(2) Since $[I^2R]_n = (I^{n+2})t^n$ and $[I^2T]_n = (I^2Q^n)t^n$ for all $n \geq 0$, assertion (2) follows from the definition of the module $C = (I^2R/I^2T)(-1)$. 

Assertions (3), (4), and (5) are readily follow from assertion (2). 
\end{proof}

In the following result  we need that $Q \cap I^2=QI$ holds true.
This condition is automatically satisfied in the case where $I$ is integrally closed (see \cite{H, I2}).

\begin{prop}\label{Ass}
Suppose that $Q \cap I^2=QI$.
Then we have $\Ass_T C  \subseteq \{\m T \}$ so that $\dim_TC=d$, if $C \neq (0)$.
\end{prop}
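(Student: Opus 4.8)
The plan is to convert the statement into a cancellation property for ideals of $A$ and then to prove that property by induction, using that $Q$ is generated by a regular sequence together with the hypothesis $Q\cap I^{2}=QI$.

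\emph{Reduction to a cancellation statement.} By Lemma~\ref{fact1}(1) we have $\m^{\ell}C=(0)$ for $\ell\gg 0$, so every prime in $\Ass_TC$ contains $\m T$; since $T/\m T\cong B$ is a polynomial ring, $\m T$ is the only prime of $T$ over $\m T$ of coheight $d$. Hence it suffices to exclude $\p\in\Ass_TC$ with $\p\supsetneq\m T$, and once this is done $C\neq(0)$ forces $\Ass_TC=\{\m T\}$, whence $\Supp_TC=V(\m T)$ and $\dim_TC=d$. So suppose $\p=(0:_Tz)\supsetneq\m T$ with $z$ homogeneous and nonzero; by Lemma~\ref{fact1}(2), $z=\overline w$ with $w\in I^{m+1}\setminus Q^{m-1}I^{2}$ for some $m\ge 2$. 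As $\p$ is homogeneous and $\p\cap A=\m$, it contains a homogeneous $f\notin\m T$ of some degree $k\ge 1$; using $Q^{k}/\m Q^{k}\cong B_k$ (valid because $\mathrm{gr}_Q(A)\cong (A/Q)[X_1,\dots,X_d]$, $a_1,\dots,a_d$ being a regular sequence), the equation $fz=0$ says precisely that there is $g\in Q^{k}\setminus\m Q^{k}$ with $gw\in Q^{m+k-1}I^{2}$. Thus Proposition~\ref{Ass} reduces to the implication
$$ g\in Q^{k}\setminus\m Q^{k},\quad w\in I^{m+1},\quad gw\in Q^{m+k-1}I^{2}\ \Longrightarrow\ w\in Q^{m-1}I^{2}. $$

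\emph{Proving the cancellation statement.} First I would extract the facts I need from the hypothesis: $Q\cap I^{2}=QI$ forces $\mathfrak c\cap I^{2}=\mathfrak c I$ for every ideal $\mathfrak c$ generated by part of the regular sequence $a_1,\dots,a_d$ (and in particular $I^{2}:_AQ=I$), by elementary manipulations using that each $a_i$ is a non-zerodivisor modulo the ideal generated by the remaining $a_j$'s. Then I would work in $\mathrm{gr}_Q(A)\cong(A/Q)[X_1,\dots,X_d]$: the leading form $g^{*}$ of $g$ has degree $k$ and a unit coefficient (since $g\notin\m Q^{k}$), so after a general $A/\m$-linear change of the $a_i$ (legitimate as $A/\m$ is infinite) one may take $g^{*}=uX_d^{\,k}+(\text{terms involving }X_1,\dots,X_{d-1})$ with $u$ a unit. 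Now $gw\in Q^{m+k-1}I^{2}$ says exactly that $g^{*}$ kills the class of $w$ in $Q^{m-1}I^{2}/Q^{m}I^{2}$, and the goal is that this class vanish, i.e. that $X_d$ be a non-zerodivisor on $\mathrm{gr}_Q(I^{2})$ modulo $(X_1,\dots,X_{d-1})$ in the relevant degree. I would prove this by induction on $d$: modulo $(a_1,\dots,a_{d-1})$, a $1$-dimensional Cohen–Macaulay ring in which the image of $a_d$ is a non-zerodivisor, the factor $a_d^{\,k}$ cancels directly, and then the inductive hypothesis, applied to the sub-ideal $(a_1,\dots,a_{d-1})$ (for which the consequences of the hypothesis recorded above persist, and to which $Q\cap I^{2}=QI$ descends), disposes of the remaining $X_1,\dots,X_{d-1}$-contributions. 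It is convenient to organize this induction along the filtration $0\to L^{(i)}\to C^{(i)}\to C^{(i+1)}\to 0$ of Definition~\ref{VP} (with $C^{(i)}=(0)$ for $i\gg 0$, so $\Ass_TC\subseteq\bigcup_{i\ge 2}\Ass_TL^{(i)}$), using that each $L^{(i)}$ embeds in $\mathrm{gr}_Q(I^{i})(-i)$ as a graded $T$-submodule, which brings the problem to the structure of these associated graded modules.

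\emph{Main obstacle.} The delicate point is the control of the ``lower-order'' terms coming from $X_1,\dots,X_{d-1}$ — concretely, showing that intersections of the shape $I^{m+1}\cap(a_1,\dots,a_{d-1})$ stay inside $Q^{m-1}I^{2}$. A Valabrega–Valla estimate in a single degree does not suffice by itself, and the hypothesis $Q\cap I^{2}=QI$ (in the strengthened form $\mathfrak c\cap I^{2}=\mathfrak c I$ for the relevant sub-ideals $\mathfrak c$) has to be fed in precisely at these steps and propagated correctly through the induction; this is the main obstacle.
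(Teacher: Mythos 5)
Your reduction to the element-wise statement ``$g\in Q^{k}\setminus\m Q^{k}$, $w\in I^{m+1}$, $gw\in Q^{m+k-1}I^{2}$ $\Rightarrow$ $w\in Q^{m-1}I^{2}$'' is correct (associated primes of the graded module $C$ are homogeneous annihilators of homogeneous elements, and they all contain $\m T$), but it is only a restatement of the proposition; all of the content lies in the cancellation statement, and the argument you sketch for it does not go through. First, a small but symptomatic slip: $gw\in Q^{m+k-1}I^{2}$ is \emph{not} the statement that $g^{*}$ kills a class in $Q^{m-1}I^{2}/Q^{m}I^{2}$ --- the element $w$ lies in $I^{m+1}$ and need not lie in $Q^{m-1}I^{2}$ (that is precisely what must be proved), so the problem does not sit inside $\mathrm{gr}_Q(I^2)$ the way you use it; the graded pieces of $C$ are $I^{n+1}/Q^{n-1}I^{2}$, not $Q^{n-1}I^{2}/Q^{n}I^{2}$. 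More seriously, the step you yourself isolate as the main obstacle --- that intersections $I^{m+1}\cap(a_1,\dots,a_{d-1})$ lie in $Q^{m-1}I^{2}$ --- is false under the standing hypothesis. Take the ring of Theorem \ref{ex2} with $d=2$ and no $X_j$'s (Wang's example) and $I=\m$: there $A$ is free over $k[[a_1,a_2]]$ with basis $1,y,v_1,v_2,v_1^{2},v_2^{2}$, the ideal $\m$ is integrally closed, so $Q\cap\m^{2}=Q\m$ holds, and $v_1^{3}=a_1y\in\m^{3}\cap(a_1)$; but $a_1y\notin Q\m^{2}$, since its $y$-coordinate is $a_1\notin(a_1,a_2)^{2}$. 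So with your $m=2$, $d=2$ the needed containment fails, even though the proposition itself is true there (indeed $C\cong B_+(-1)$). The difficulty is intrinsic: $Q\cap I^{2}=QI$ is a Valabrega--Valla condition in degree $2$ only, and it does not propagate to the higher-degree intersection statements your induction on $d$ requires; no ``general position'' choice of the $a_i$ repairs this, because in the example $\depth G=0$.

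For comparison, the paper avoids element-wise cancellation altogether. It localizes at a hypothetical associated prime $P\supsetneq\m T$ (so $\height_TP\ge 2$) and runs a depth count through $0\to I^{2}T(-1)\to I^{2}R(-1)\to C\to 0$ and $0\to I^{2}T\to T\to T/I^{2}T\to 0$, using that $a_1$ is a nonzerodivisor on $I^{2}R$, to conclude $P\in\Ass_T(T/I^{2}T)$; the real work is then Lemma \ref{Ass_}, $\Ass_T(T/I^{2}T)=\{\m T\}$, proved via the exact sequences of Lemma \ref{exact}, which are built exactly on the degree-by-degree intersection facts that \emph{do} follow from $Q\cap I^{2}=QI$ (Lemmas \ref{cap} and \ref{cap2}: $(a_1,\dots,a_i)\cap Q^{n+1}I^{2}=(a_1,\dots,a_i)Q^{n}I^{2}$ and $Q^{n+1}\cap Q^{n}I^{2}=Q^{n+1}I$). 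Note these are intersections of $Q$-type ideals with $Q^{n}I^{2}$, not with $I^{m+1}$; your plan needs the latter, stronger kind, and that is exactly what fails. If you want to salvage a direct approach, you would have to reformulate the induction so that only the Lemma \ref{cap}/\ref{cap2}-type intersections are ever used, which in effect reproduces the paper's homological route.
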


Let $Q=(a_1, \dots, a_d) $ be a minimal reduction of $I. $  In the proof of Proposition \ref{Ass} we need the following lemmata.

\begin{lem}\label{cap}
Suppose  that $Q \cap I^2=QI$.
Then we have $(a_1,a_2,\cdots,a_i) \cap Q^{n+1}I^2=(a_1,a_2,\cdots,a_i)Q^nI^2$ for all $n \geq 0$ and $1 \leq i \leq d$.
Therefore, $a_1t \in T$ is a nonzero divisor on $T/I^2T$.
\end{lem}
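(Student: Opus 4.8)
First note that $a_1,\dots,a_d$, being a system of parameters of the Cohen--Macaulay ring $A$, is a regular sequence; hence so is every subsequence and every permutation, and in particular $(c_1,\dots,c_s)\cap(c'_1,\dots,c'_t)=(c_1,\dots,c_s)(c'_1,\dots,c'_t)$ whenever $c_1,\dots,c_s,c'_1,\dots,c'_t$ is a regular sequence. The inclusion $\supseteq$ is clear, since $(a_1,\dots,a_i)Q^nI^2$ lies both in $(a_1,\dots,a_i)$ and, as $a_j\in Q$, in $Q^{n+1}I^2$; so only $\subseteq$ needs proof. The plan is to induct on $d$. The key preliminary point is that $Q\cap I^2=QI$ passes to $\overline A:=A/(a_j)$ for every $j$: if $q+a_js=i+a_jt$ with $q\in Q$ and $i\in I^2$, then $i=q+a_j(s-t)\in Q\cap I^2=QI$, so the element lies in $QI+(a_j)$; since the images of $I$, $Q$, $QI$ in $\overline A$ are $\overline I$, $\overline Q$, $\overline Q\,\overline I$, this gives $\overline Q\cap\overline I^2=\overline Q\,\overline I$, with $\overline Q$ a minimal reduction of $\overline I$ generated by the regular sequence of the surviving $\overline a_k$. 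The case $d=1$ is trivial ($a_1$ is a nonzerodivisor and $Q^{n+1}I^2=a_1^{n+1}I^2$), so assume $d\geq2$ with the lemma known in dimension $d-1$.

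Two consequences of the hypothesis will be used repeatedly. First, $(I^2:_A a_j)=I$ for each $j$: from $a_jb\in(a_j)\cap I^2\subseteq Q\cap I^2=QI$ we get $a_jb=\sum_k a_kc_k$ with $c_k\in I$, whence $b-c_j\in(a_k:k\neq j)\subseteq I$ and so $b\in I$. Second, an induction on the number of generators (reduce modulo one $a_j$, apply the previous sentence in the quotient) yields the auxiliary identity $(a_{k_1},\dots,a_{k_r})\cap I^2=(a_{k_1},\dots,a_{k_r})I$ for every subset of the $a_k$.

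The core is the case $i=1$, i.e.\ $(Q^{n+1}I^2:_A a_1)=Q^nI^2$ for all $n\geq0$; this is exactly the statement that $a_1t$ is a nonzerodivisor on $T/I^2T\cong\bigoplus_nQ^n/Q^nI^2$, which gives the last assertion of the lemma. I would prove it by induction on $n$. For $n=0$: writing $a_1b=a_1c+\rho$ with $c\in I^2$, $\rho\in(a_2,\dots,a_d)I^2$, regular-sequence cancellation forces $b=c+f$ with $f\in(a_2,\dots,a_d)$ and $a_1f=\rho$; writing $f=\sum_{k\geq2}a_kd_k$ and $\rho=\sum_{k\geq2}a_ke_k$ with $e_k\in I^2$, one analyses the syzygy $\sum_{k\geq2}a_k(a_1d_k-e_k)=0$ via the Koszul relations on the regular sequence $a_2,\dots,a_d$ and, using $(I^2:_Aa_k)=I$ and the auxiliary identity, concludes $f\in(a_2,\dots,a_d)I\subseteq I^2$, so $b\in I^2$. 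For the step $n\to n+1$, lift $a_1b\in Q^{n+2}I^2$ through $A/(a_2)$ (using the lemma in dimension $d-1$) to $b=c+a_2s$ with $c\in Q^{n+1}I^2$, reducing to $a_1(a_2s)\in Q^{n+2}I^2$; expanding $Q^{n+2}I^2$ in powers of $a_1,a_2$, cancelling factors of the regular sequence, and repeatedly using $(I^2:_Aa_j)=I$ and $QI\subseteq I^2$, one gets $a_2s\in Q^{n+1}I^2$, hence $b\in Q^{n+1}I^2$. Finally the general $i$ with $1\leq i\leq d-1$ (the case $i=d$ is trivial since $Q^{n+1}I^2\subseteq Q$) follows by another induction on $n$: lift $z\in(a_1,\dots,a_i)\cap Q^{n+1}I^2$ through $A/(a_d)$ to $z=w+a_ds$ with $w\in(a_1,\dots,a_i)Q^nI^2$; regular-sequence cancellation gives $s\in(a_1,\dots,a_i)$ and $a_ds\in Q^{n+1}I^2$, the case $i=1$ (for $a_d$, by symmetry of the generators) gives $s\in Q^nI^2$, the inductive hypothesis on $n$ (or the auxiliary identity when $n=0$) gives $s\in(a_1,\dots,a_i)Q^{n-1}I^2$ (resp.\ $s\in(a_1,\dots,a_i)I$), and then $a_ds\in(a_1,\dots,a_i)Q^nI^2$, so $z\in(a_1,\dots,a_i)Q^nI^2$.

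The point I expect to be the real obstacle is the base case $(QI^2:_Aa_1)=I^2$: one must extract, from the single hypothesis $Q\cap I^2=QI$, that an element $f\in(a_2,\dots,a_d)$ with $a_1f\in(a_2,\dots,a_d)I^2$ lies in $(a_2,\dots,a_d)I$ — the Koszul bookkeeping on the regular sequence $a_2,\dots,a_d$, combined with the two consequences above, is the one genuinely delicate computation; the remaining inductions on $d$ and $n$ are mechanical cancellations of regular-sequence multiples.
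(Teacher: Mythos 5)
Your preparatory material is sound (the hypothesis $Q\cap I^2=QI$ does pass to $A/(a_j)$, $(I^2:_Aa_j)=I$, and $(a_{k_1},\ldots,a_{k_r})\cap I^2=(a_{k_1},\ldots,a_{k_r})I$), and your derivation of the general case $1\le i\le d-1$ from the case $i=1$ does work. But the core case $i=1$ is not proved as written, and that is where the content of the lemma sits. In the base case $n=0$, the Koszul analysis of $\sum_{k\ge 2}a_k(a_1d_k-e_k)=0$ gives $a_1d_k\in I^2+(a_2,\ldots,\widehat{a_k},\ldots,a_d)$ (with $a_k$ omitted), so what you must invoke is $(I^2:a_1)=I$ in the quotient $A/(a_2,\ldots,\widehat{a_k},\ldots,a_d)$ --- available because the hypothesis passes to such quotients, but not the fact ``$(I^2:_Aa_k)=I$ in $A$'' that you cite. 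More seriously, the step $n\to n+1$ is a genuine gap: after lifting through $A/(a_2)$ you are left with exactly the claim that $a_1(a_2s)\in Q^{n+2}I^2$ forces $a_2s\in Q^{n+1}I^2$, which is an instance, at level $n+1$, of the statement being proved; the level-$n$ hypothesis only yields $a_2s\in Q^nI^2$, and ``expanding $Q^{n+2}I^2$ in powers of $a_1,a_2$ and cancelling'' is not an argument --- no combination of the listed facts is exhibited that closes it. The case $i=1$ can, however, be done in one stroke, which repairs your plan: set $J=(a_2,\ldots,a_d)$ and use $Q^{n+1}I^2=a_1Q^nI^2+J^{n+1}I^2$. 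If $a_1b\in Q^{n+1}I^2$, write $a_1b=a_1u+v$ with $u\in Q^nI^2$, $v\in J^{n+1}I^2$; since $a_1$ is a nonzerodivisor on $A/J^{n+1}$ ($J$ is generated by a regular sequence and $a_1$ is regular modulo $J$), $w:=b-u\in J^{n+1}$ and $a_1w\in J^{n+1}I^2$. Writing $w=\sum_{|\beta|=n+1}a^{\beta}w_{\beta}$ and $a_1w=\sum_{|\beta|=n+1}a^{\beta}e_{\beta}$ with $e_{\beta}\in I^2$ (monomials in $a_2,\ldots,a_d$), freeness of $J^{n+1}/J^{n+2}$ over $A/J$ gives $a_1w_{\beta}\in I^2+J$, hence $w_{\beta}\in I+J$ because $(\overline{I}^2:\overline{a_1})=\overline{I}$ in $A/J$, hence $w\in J^{n+1}I\subseteq Q^nI^2$ and $b\in Q^nI^2$.

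For comparison, the paper needs none of this machinery: it runs a descending induction on $i$ from the trivial case $i=d$ (where $(a_1,\ldots,a_d)=Q\supseteq Q^{n+1}I^2$), together with an ascending induction on $n$. Given $x\in(a_1,\ldots,a_i)\cap Q^{n+1}I^2\subseteq(a_1,\ldots,a_{i+1})Q^nI^2$, it writes $x=y+a_{i+1}z$ with $y\in(a_1,\ldots,a_i)Q^nI^2$ and $z\in Q^nI^2$, uses $((a_1,\ldots,a_i):a_{i+1})=(a_1,\ldots,a_i)$ to place $z$ in $(a_1,\ldots,a_i)\cap Q^nI^2$, and finishes by the induction on $n$ (for $n=0$, by $(a_1,\ldots,a_i)\cap QI=(a_1,\ldots,a_i)I$, from the freeness of $Q/QI$ over $A/I$). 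Enlarging the generating set from $i$ to $i+1$, instead of attacking $i=1$ head-on with syzygies, quotient rings and an induction on $d$, is the trick your proposal misses; your own general-$i$ reduction at the end is essentially this manoeuvre with $a_d$ in place of $a_{i+1}$, so once it is seen, the case $i=1$ is not needed as a separate pillar at all.
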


\begin{proof}
We have only to show that   $(a_1,a_2,\cdots,a_i) \cap Q^{n+1}I^2 \subseteq (a_1,a_2,\cdots,a_i)Q^n I^2$ holds true for all $n \geq 0$.
We proceed by induction on $n$ and $i$.
We may assume that $i < d$ and that our assertion holds true for $i+1$.
Suppose $n=0$ and take $x \in (a_1,a_2,\cdots,a_i) \cap QI^2$. 
Then, by the hypothesis of induction on $i$, we have
\begin{eqnarray*}
(a_1,a_2,\cdots,a_i) \cap QI^2 &\subseteq& (a_1,a_2,\cdots,a_i,a_{i+1}) \cap QI^2
= (a_1,a_2,\cdots,a_i,a_{i+1}) I^2
\end{eqnarray*}
Then we write $x=y+a_{i+1}z$ with $y \in (a_1,a_2,\cdots,a_i) I^2$ and $z \in I^2$.
Since $z \in [(a_1,a_2,\cdots,a_i):a_{i+1}]\cap I^2=(a_1,a_2,\cdots,a_i) \cap I^2 \subseteq (a_1,a_2,\cdots,a_i) \cap QI$.
Notice that, since $Q/QI \cong (A/I)^d$, $(a_1,a_2,\cdots,a_i) \cap QI=(a_1,a_2,\cdots,a_i)I$ holds true.
Hence, we get $x =y+a_{i+1}z \in (a_1,a_2,\cdots,a_i)I^2$.
Therefore we have $(a_1,a_2,\cdots,a_i) \cap QI^2=(a_1,a_2,\cdots,a_i)I^2. $  

Assume that $n \geq 1$ and that our assertion holds true for $n-1$.
Take $x \in (a_1,a_2,\cdots,a_i) \cap Q^{n+1}I^2$.
Then, by the hypothesis of induction on $i$, we have
\begin{eqnarray*}
(a_1,a_2,\cdots,a_i) \cap Q^{n+1}I^2 &\subseteq& (a_1,a_2,\cdots,a_i,a_{i+1}) \cap Q^{n+1}I^2
= (a_1,a_2,\cdots,a_i,a_{i+1})Q^n I^2
\end{eqnarray*}
Write $x=y+a_{i+1}z$ with $y \in (a_1,a_2,\cdots,a_i)Q^n I^2$ and $z \in Q^n I^2$.
By the hypothesis of induction on $n$, we have $z \in [(a_1,a_2,\cdots,a_i):a_{i+1}]\cap Q^{n} I^2=(a_1,a_2,\cdots,a_i) \cap Q^{n} I^2=(a_1,a_2,\cdots,a_i) Q^{n-1} I^2$.
Therefore, we get $x =y+a_{i+1}z \in (a_1,a_2,\cdots,a_i) Q^n I^2$. 
Thus, $(a_1,a_2,\cdots,a_i) \cap Q^{n+1}I^2=(a_1,a_2,\cdots,a_i)Q^n I^2$ as required.
\end{proof}

\begin{lem}\label{cap2}
Suppose that $Q \cap I^2=QI$.
Then $Q^{n+1} \cap Q^nI^2=Q^{n+1}I$ for all $n \geq 0$.
\end{lem}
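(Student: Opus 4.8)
The plan is to prove the nontrivial inclusion $Q^{n+1}\cap Q^nI^2\subseteq Q^{n+1}I$; the reverse one is automatic since $Q^{n+1}I\subseteq Q^{n+1}$ and $Q\subseteq I$ gives $Q^{n+1}I=Q^n(QI)\subseteq Q^nI^2$. I would set up a double induction: an outer induction on $d=\dim A$, whose idea is to factor out one element of the regular sequence generating $Q$, and an inner induction on $n$. Recall that $Q=(a_1,\dots,a_d)$ is a parameter ideal, so $a_1,\dots,a_d$ is an $A$-regular sequence because $A$ is Cohen--Macaulay; in particular $a_1$ is a nonzerodivisor.

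The base case $d=1$ is direct: an element of $Q^{n+1}\cap Q^nI^2=(a_1^{n+1})\cap a_1^nI^2$ has the form $a_1^nz$ with $z\in I^2$, and cancelling $a_1^n$ in $a_1^nz\in(a_1^{n+1})$ gives $z\in(a_1)\cap I^2=QI=a_1I$, whence $a_1^nz\in a_1^{n+1}I=Q^{n+1}I$. For $d\ge 2$ I would pass to $\overline A=A/(a_1)$, with $\overline Q=Q\overline A$ and $\overline I=I\overline A$: then $\overline A$ is Cohen--Macaulay of dimension $d-1$ and $\overline Q$ is a parameter ideal that is a minimal reduction of the $\overline{\m}$-primary ideal $\overline I$. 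The step needing care is that the hypothesis descends, i.e., $\overline Q\cap\overline I^2=\overline Q\,\overline I$; this reduces to the observation that if $x\in A$ lies in $Q$ and in $I^2+(a_1)$, then writing $x=c+a_1r$ with $c\in I^2$ forces $c=x-a_1r\in Q\cap I^2=QI$, so $x\in QI+(a_1)$. Applying the outer inductive hypothesis in $\overline A$ and pulling everything back along the surjection $A\to\overline A$ (using that preimage commutes with intersection) then yields
\[
\bigl(Q^{m+1}+(a_1)\bigr)\cap\bigl(Q^mI^2+(a_1)\bigr)=Q^{m+1}I+(a_1)\qquad\text{for all }m\ge 0,
\]
an identity I will call $(\star)$.

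The inner induction on $n$ then finishes the case $d\ge 2$: the case $n=0$ is precisely $Q\cap I^2=QI$, and for $n\ge 1$, given $x\in Q^{n+1}\cap Q^nI^2$, the identity $(\star)$ lets me write $x=\rho+a_1\sigma$ with $\rho\in Q^{n+1}I$ and $\sigma\in A$. Since $\rho$ again lies in $Q^{n+1}\cap Q^nI^2$, we get $a_1\sigma=x-\rho\in(a_1)\cap Q^{n+1}\cap Q^nI^2$. Now $(a_1)\cap Q^{n+1}=a_1Q^n$ (a standard property of the regular sequence $a_1,\dots,a_d$, provable as in Lemma \ref{cap} with $I^2$ replaced by $A$) forces $\sigma\in Q^n$, while $(a_1)\cap Q^nI^2=a_1Q^{n-1}I^2$, which is Lemma \ref{cap} with $i=1$, forces $\sigma\in Q^{n-1}I^2$. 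Hence $\sigma\in Q^n\cap Q^{n-1}I^2=Q^nI$ by the inner inductive hypothesis, so $a_1\sigma\in a_1Q^nI\subseteq Q^{n+1}I$ and therefore $x\in Q^{n+1}I$.

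I expect the main obstacle to be the descent step: making precise that $Q\cap I^2=QI$ is inherited by $A/(a_1)$ and that its consequence there pulls back to $(\star)$ in $A$, and then threading the two nested inductions together correctly. Everything else should be routine, using only that $a_1$ is a nonzerodivisor together with Lemma \ref{cap}.
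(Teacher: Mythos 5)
Your proof is correct, but it takes a genuinely different route from the paper's. The paper disposes of the lemma in one stroke: write $f\in Q^{n+1}\cap Q^nI^2$ as $f=\sum_{|\alpha|=n}x_\alpha a_1^{\alpha_1}\cdots a_d^{\alpha_d}$ with $x_\alpha\in I^2$; since $f\in Q^{n+1}$, the image of $ft^n$ vanishes in degree $n$ of $\rmG(Q)\cong (A/Q)[\overline{a_1t},\dots,\overline{a_dt}]$, and because this is a polynomial ring over $A/Q$ (the $a_i$ form a regular sequence) every coefficient satisfies $x_\alpha\in Q\cap I^2=QI$, whence $f\in Q^n\cdot QI=Q^{n+1}I$. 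You instead run a double induction: outer on $d$, passing to $A/(a_1)$ --- where, as you verify, the hypothesis $Q\cap I^2=QI$ descends and the CM, primary and reduction conditions persist, so the inductive conclusion pulls back to your identity $(\star)$ --- and inner on $n$, peeling off $a_1$ via $(a_1)\cap Q^{n+1}=a_1Q^n$ and Lemma \ref{cap} with $i=1$. There is no circularity in invoking Lemma \ref{cap}, since its proof in the paper does not use the present lemma, and each step of your descent and pullback is sound. What the paper's argument buys is brevity and independence from both Lemma \ref{cap} and the colon fact $Q^{n+1}:a_1=Q^n$; note that this last fact is itself normally proved from the same polynomial-ring (quasi-regularity) structure of $\rmG(Q)$ that the paper uses directly, so your route is not really more elementary, though it is self-contained modulo standard properties of regular sequences and illustrates the ``reduce modulo one element of the regular sequence'' technique that the paper itself employs elsewhere (e.g.\ in the proof of Theorem \ref{m4}).
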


\begin{proof}
We have only to show that $Q^{n+1} \cap Q^nI^2 \subseteq Q^{n+1}I$ for $n \geq 0$.
Take $f \in Q^{n+1} \cap Q^nI^2$ and write $f=\sum_{|\alpha|=n}x_{\alpha}a_1^{\alpha_1} a_2^{\alpha_2} \cdots a_d^{\alpha_d}$ with $x_{\alpha} \in I^2$, where $\alpha=(\alpha_1,\alpha_2,\cdots,\alpha_d) \in \Z^d$ and $|\alpha|=\sum_{i=1}^d\alpha_i$.
Then, we have $\overline{ft^n}=\sum_{|\alpha|=n}\overline{x_{\alpha}a_1^{\alpha_1} a_2^{\alpha_2} \cdots a_d^{\alpha_d}t^n}=0$ in $\rmG(Q)=\rmR'(Q)/t^{-1}\rmR'(Q)$, where $\overline{ft^n}$ and $\overline{x_{\alpha}a_1^{\alpha_1} a_2^{\alpha_2} \cdots a_d^{\alpha_d}t^n}=0$ denote the images of $ft^n$ and $x_{\alpha}a_1^{\alpha_1} a_2^{\alpha_2} \cdots a_d^{\alpha_d}t^n$ in $\rmG(Q)$ respectively.
Because $\rmG(Q) \cong (A/Q)[\overline{a_1t}, \overline{a_2t}, \cdots, \overline{a_dt}]$ is the polynomial ring over the ring $A/Q$, we have $x_{\alpha} \in Q \cap I^2=QI$.
Thus $f \in Q^{n+1}I$ so that $Q^{n+1} \cap Q^nI^2 = Q^{n+1}I$ holds true.
\end{proof}

\begin{lem}\label{exact}
Suppose that $Q \cap I^2=QI$.
Then the following  sequences
\begin{itemize}
\item[(1)] $ 0 \to T/I T \overset{a_1}{\to} T/I^2 T \to T/[I^2T+a_1T] \to 0 $, and
\item[(2)] $ 0 \to T/[I^2 T+QT](-1) \overset{a_1t}{\to} T/[I^2T+a_1T] \to T/[I^2 T+a_1t T+a_1T] \to 0 $
\end{itemize}
of graded $T$-modules are exact. 
\end{lem}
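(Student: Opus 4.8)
The statement consists of two short exact sequences of graded $T$-modules, and in each case the only thing to verify is that the indicated multiplication map is injective; exactness in the middle and on the right is then automatic because the cokernel is by definition the quotient by the image of the displayed element. So the plan reduces to two injectivity checks, both of which should follow from the hypothesis $Q \cap I^2 = QI$ together with the two intersection lemmas already established (Lemma \ref{cap} and Lemma \ref{cap2}).

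For sequence (1) I would argue that multiplication by $a_1$ on $T/I^2T$, restricted to the submodule $IT/I^2T$, is injective. Concretely, $a_1 t$ is a nonzerodivisor on $T/I^2T$ by the last assertion of Lemma \ref{cap}; since $T/I^2T$ is generated in nonnegative degrees and $a_1$ differs from $a_1 t$ only by the $T$-grading shift, multiplication by $a_1$ is also injective on $T/I^2T$. It therefore suffices to identify the kernel of $T/IT \to T/I^2T$ in each degree $n$: an element of $[T/IT]_n = I^n/I^{n+1}$ (for $n \geq 1$; degree $0$ being $A/I$) maps to zero in $[T/I^2T]_n = I^nQ^{\,?}/\dots$ — here I must unwind that $[IT]_n = Q^n I$ and $[I^2T]_n = Q^n I^2$, so the map $I^n/Q^n I \to \text{(degree } n \text{ of } T/I^2T)$ is really $Q^n I / Q^n I^2 \to Q^n / Q^n I^2$ after multiplying the generators, and its injectivity is exactly the statement $Q^{n+1} \cap Q^n I^2 = Q^{n+1} I$, i.e. Lemma \ref{cap2} (after the shift in $n$). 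I would write this out degree by degree.

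For sequence (2) the target is $T/[I^2T + a_1 T]$ and the map is multiplication by $a_1 t$ from $T/[I^2T + QT](-1)$. Injectivity here says: if $a_1 t \cdot \xi \in I^2 T + a_1 T$ then $\xi \in I^2 T + QT$. Passing to degree $n$, this translates into a statement about when $a_1 \cdot x \in Q^{n+1} I^2 + (a_1)$ forces $x \in Q^n I^2 + Q^{n+1}$, which I expect to follow by combining Lemma \ref{cap} (the colon/intersection identity $(a_1, \dots, a_i) \cap Q^{n+1}I^2 = (a_1, \dots, a_i)Q^n I^2$, specialized to $i=1$) with the fact that $a_1$ is a nonzerodivisor on $A$. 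The main obstacle — and the only genuinely delicate point — is bookkeeping the grading shifts and correctly computing the degree-$n$ pieces of the four modules $T/IT$, $T/I^2T$, $T/[I^2T+a_1T]$, and $T/[I^2T+QT]$ so that the reductions to Lemmas \ref{cap} and \ref{cap2} are clean; once the degree-$n$ descriptions are in hand, each injectivity is a one-line consequence of the cited intersection identity.
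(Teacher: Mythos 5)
Your overall plan (reduce each sequence to injectivity of the first map, work degree by degree, and feed the degree-$n$ statements into Lemmas \ref{cap} and \ref{cap2}) is the same as the paper's, and your sketch for sequence (2) is essentially the paper's proof: in degree $n$ one writes $a_1x=y+a_1z$ with $y\in Q^nI^2$, $z\in Q^n$, gets $a_1(x-z)\in (a_1)\cap Q^nI^2=a_1Q^{n-1}I^2$ from Lemma \ref{cap} with $i=1$, cancels the nonzerodivisor $a_1$, and concludes $x\in Q^{n-1}I^2+Q^n$. (Note that in the graded setting the residual term lies in $a_1Q^n$, not merely in $(a_1)$, which is what makes this work.)

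For sequence (1), however, your argument has a genuine gap, in two places. First, the opening claim that $a_1$ is injective on $T/I^2T$ because it ``differs from $a_1t$ only by the grading shift'' is not a valid inference and the conclusion is false in general: injectivity of $a_1t$ compares degree $n$ with degree $n+1$ (it says $a_1x\in Q^{n+1}I^2\Rightarrow x\in Q^nI^2$), whereas injectivity of $a_1$ is the strictly stronger degree-preserving statement $a_1x\in Q^nI^2\Rightarrow x\in Q^nI^2$; already in degree $0$ it would assert $(I^2:_Aa_1)=I^2$, which fails, e.g., for $A=k[[t^3,t^4,t^5]]$, $I=\m$, $a_1=t^3$ (here $t^3t^4\in\m^2$ but $t^4\notin\m^2$, while $Q\cap I^2=QI$ does hold). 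Moreover the map in the sequence goes out of $T/IT$ (whose degree-$n$ piece is $Q^n/Q^nI$, not $I^n/I^{n+1}$ and not $Q^nI/Q^nI^2$), so what must actually be proved is $(I^2T:_Ta_1)=IT$, i.e.\ degree-wise: $x\in Q^n$ and $a_1x\in Q^nI^2$ imply $x\in Q^nI$. Second, this statement is \emph{not} ``exactly'' Lemma \ref{cap2}: Lemma \ref{cap2} only gives $a_1x\in Q^{n+1}\cap Q^nI^2=Q^{n+1}I$, and to conclude $x\in Q^nI$ you still need the further identity $(a_1)\cap Q^{n+1}I=a_1Q^nI$ (the paper derives it from the fact that $a_1t$ is a nonzerodivisor on $T/IT\cong (A/I)[X_1,\dots,X_d]$ --- a statement about $T/IT$, not the $T/I^2T$ assertion of Lemma \ref{cap}) followed by cancellation of $a_1$. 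Without this extra step your reduction for (1) does not close.
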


\begin{proof}
$(1)$ Let us consider the homomorphism $$\phi: T \to T/I^2 T$$ of graded $T$-modules such that $\phi(f)=\overline{a_1 f}$ for $f \in T$ where $\overline{a_1 f}$ denotes the image of $a_1 f$ in $T/I^2 T$.
Because $\phi(IT)=(0)$ and $\Coker \phi=T/[I^2 T +a_1T] $, we have only to show that $\ker \phi \subseteq IT$.
Take $f \in [\ker \phi]_{n}$ and write $f=xt^{n}$ with $n \geq 0$ and $x \in  Q^{n}$.
Then we have $a_1 f=a_1xt^n \in I^2 T $ so that $a_1x \in Q^n I^2 \cap Q^{n+1}=Q^{n+1}I$ by Lemma \ref{cap2}.
Because $a_1t \in T$ forms a nonzero divisor on $T/IT$ (notice that $T/IT \cong (A/I)[\overline{a_1t}, \overline{a_2t}, \cdots, \overline{a_dt}]$ is the polynomial ring over the ring $A/I$), we have $(a_1) \cap Q^{n+1}I=a_1Q^nI$ so that
$x \in Q^nI$.
Therefore $f \in IT$, and hence $\ker \phi \subseteq IT$.
Thus, we get the first required exact sequence.

$(2)$ Let us consider the homomorphism $$\varphi: T(-1) \to T/[I^2 T+a_1T]$$ of graded $T$-modules such that $\varphi(f)=\overline{a_1tf}$ for $f \in T$ where $\overline{a_1tf}$ denotes the image of $a_1t f$ in $T/[I^2 T+a_1T]$.
Because $\varphi(I^2T+QT)=(0)$ and $\Coker \phi=T/[I^2 T +a_1tT+a_1T] $, we need to show that $[\ker \varphi]_n \subseteq I^2T_{n-1}+QT_{n-1}$ for all $n \geq 1$.
Take $f \in [\ker \varphi]_{n}$ and write $f=xt^{n-1}$ with $n \geq 1$ and $x \in  Q^{n-1}$.
Then we have $a_1tf=a_1xt^n \in I^2 T+a_1 T $ so that $a_1x \in Q^n I^2+a_1Q^n$.
Write $a_1x=y+a_1z$ with $y \in Q^nI^2$ and $z \in Q^n$.
Then have $$a_1(x-z)=y \in (a_1) \cap Q^n I^2=a_1Q^{n-1}I^2$$ by Lemma \ref{cap}.
Hence $x-z \in Q^{n-1}I^2$ so that $x \in Q^{n-1}I^2+Q^n$.
Therefore $f \in I^2T_{n-1}+QT_{n-1}$ and hence $[\ker \varphi]_n \subseteq I^2T_{n-1}+QT_{n-1}$.
Consequently, we get the second  required exact sequence.
\end{proof}

The following Lemma \ref{Ass_} is the crucial fact in the proof of Proposition \ref{Ass}.

\begin{lem}\label{Ass_}
Assume that $Q \cap I^2=QI$.
Then we have $\Ass_T (T/I^2T) = \{ \m T \}$.
\end{lem}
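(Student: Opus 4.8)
The plan is to show both inclusions. The inclusion $\{\m T\}\subseteq\Ass_T(T/I^2T)$ is easy: since $\m=\sqrt Q$ and $I^2T+\m^kT$ strictly grows toward $\m T$, concretely $T/I^2T$ has $(A/I^2)[\overline{a_1t},\dots,\overline{a_dt}]$ as a quotient-type description in low degrees; more directly, $[I^2T]_0=I^2$ so the degree-zero part of $T/I^2T$ is $A/I^2$, whose associated primes include $\m$ (as $A$ is Cohen-Macaulay of positive dimension, $\m\in\Ass_A(A/I^2)$ because $I^2$ is $\m$-primary, hence $\depth A/I^2=0$), and the annihilator of a degree-zero socle element is $\m T$. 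So the real content is the reverse inclusion.

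For $\Ass_T(T/I^2T)\subseteq\{\m T\}$, the plan is to use the two exact sequences of Lemma \ref{exact} to peel off the variables $a_it$ and the ring elements $a_i$ one at a time, reducing to a ring where the associated primes are transparent. First, by Lemma \ref{cap}, $a_1t$ is a nonzerodivisor on $T/I^2T$; so by the standard fact that associated primes behave well modulo a nonzerodivisor together with the depth/grade bookkeeping, it suffices to understand $\Ass_T$ of the successive quotients appearing when we kill $a_1t$, then $a_1$, then $a_2t$, then $a_2$, and so on. Concretely I would iterate the two sequences in Lemma \ref{exact}: sequence (1) relates $T/I^2T$ to $T/[I^2T+a_1T]$ (with a copy of $T/IT$, which is a polynomial ring over $A/I$, hence has $\Ass_T$ equal to $\{\m T\}$ shifted appropriately — actually $\Ass_T(T/IT)=\{\m T\}$ since $A/I$ has only $\m$ associated), and sequence (2) relates $T/[I^2T+a_1T]$ to $T/[I^2T+a_1tT+a_1T]$ (with a copy of $T/[I^2T+QT]$, a polynomial ring over $A/I^2$ in the remaining $\overline{a_jt}$, whose associated primes are again just $\m T$ because $\Ass_A(A/I^2)=\{\m\}$). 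Since $\Ass$ of the middle term of a short exact sequence is contained in the union of the $\Ass$ of the two ends, each step shows $\Ass_T$ of the left term is contained in $\{\m T\}$ provided we know it for the quotient $T/[I^2T+a_1tT+a_1T]$. Repeating with $a_2,a_2t,\dots$ we reduce, after $d$ steps, to $T/[I^2T+QT+\sum_i a_itT]$; but modulo all the $a_it$ the ring $T$ collapses to $A$ in degree zero and vanishes in positive degrees up to the $I^2$ and $Q$ relations, leaving a module that is a subquotient of a finite-length module over $A$ sitting in degree zero, hence with $\Ass_T=\{\m T\}$ (or empty, which is fine). Running the inclusions back up the chain of short exact sequences yields $\Ass_T(T/I^2T)\subseteq\{\m T\}$.

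The main obstacle, and the step I would write out most carefully, is the bookkeeping in the induction: keeping track of gradings/shifts $(-1)$ across the sequences, checking that each "polynomial ring over $A/I$ or $A/I^2$" factor genuinely has $\Ass_T=\{\m T\}$ (this uses that for a graded polynomial ring $B[Y_1,\dots,Y_r]$ over an Artinian — or more generally over a ring whose only associated prime is its maximal ideal — the associated primes as a $T$-module pull back to $\m T$; here one invokes that $A/I$ and $A/I^2$ are Artinian local with maximal ideal $\m/I$, $\m/I^2$), and confirming that the process terminates with a module whose support is contained in $V(\m T)$. A secondary subtlety is that Lemma \ref{exact} as stated only strips off $a_1$ and $a_1t$; to iterate I would either note that after quotienting the hypothesis $Q\cap I^2=QI$ and the conclusions of Lemmas \ref{cap}, \ref{cap2} still apply with $Q$ replaced by $(a_2,\dots,a_d)$ over the ring $A/a_1A$ (which remains Cohen-Macaulay since $a_1$ is part of a system of parameters), or simply re-derive the analogous sequences directly. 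Once that reduction is set up, the conclusion is immediate.
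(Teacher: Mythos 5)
Your reduction has a genuine gap at its terminal step. The only tool you invoke to propagate information along the short exact sequences is the containment $\Ass_T(\mathrm{middle})\subseteq\Ass_T(\mathrm{sub})\cup\Ass_T(\mathrm{quotient})$, and your induction is supposed to bottom out at $T/[I^2T+QT+\sum_i a_itT]$. But that module is $A/(I^2+Q)$ concentrated in degree zero, and its unique associated prime over $T$ is the \emph{graded} maximal ideal $\m T+T_+$, not $\m T$ (every positive-degree element of $T$ annihilates a degree-zero module, so the annihilator of any element contains $T_+$). The same phenomenon occurs at every intermediate stage: the quotients you peel off are killed by various $a_it$'s, so their associated primes strictly contain $\m T$. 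Hence running the inclusions back up the chain only gives $\Ass_T(T/I^2T)\subseteq\{\m T\}\cup\{\text{primes containing some }a_it\}$, which is not the assertion; nothing in your argument excludes those larger primes. The phrase ``associated primes behave well modulo a nonzerodivisor'' does not rescue this: for $x$ a nonzerodivisor on $M$ there is no containment in either direction between $\Ass(M)$ and $\Ass(M/xM)$ in general. (A smaller quibble: your argument for $\{\m T\}\subseteq\Ass$ via a socle element of $A/I^2$ only shows the degree-zero part of the annihilator is $\m$; the positive-degree parts need not lie in $\m T$. This direction is anyway free, since $T/I^2T\neq(0)$ forces $\Ass\neq\emptyset$, so the hard inclusion alone gives equality.)

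The missing idea is precisely how to rule out an associated prime $P\supsetneq\m T$, and the lever is the nonzerodivisor you mention at the outset but never use: since $a_1t$ is a nonzerodivisor on $T/I^2T$ (Lemma \ref{cap}), no associated prime of $T/I^2T$ can contain $a_1t$, because associated primes consist of zerodivisors on the module. The paper exploits this by localizing the two sequences of Lemma \ref{exact} at a hypothetical $P\supsetneq\m T$ and chasing depths: $\depth_{T_P}T_P/IT_P>0$ and $\depth_{T_P}T_P/[I^2T_P+QT_P]>0$ (both are polynomial rings over Artinian local rings, with sole associated prime $\m T$), so $\depth_{T_P}T_P/I^2T_P=0$ forces $\depth_{T_P}T_P/[I^2T_P+a_1tT_P+a_1T_P]=0$; since $a_1t$ annihilates that module, $a_1t\in P$, contradicting the nonzerodivisor property. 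In other words, no iteration over $a_2,a_3,\dots$ (and no descent to $A/(a_1)$) is needed at all: the two sequences of Lemma \ref{exact} plus the observation that every associated prime of $T/[I^2T+a_1tT+a_1T]$ contains $a_1t$, while no associated prime of $T/I^2T$ does, already close the argument. Your proposal contains the right ingredients but assembles them so that the decisive contradiction never occurs, and the terminal claim it relies on instead is false.
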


\begin{proof}
Take $P \in \Ass_T ( T/I^2T)$ then we have $\m T \subseteq P$, because $\m^{\ell} (T/I^2T)=(0)$ for $\ell \gg 0$.
Assume that $\m T \subsetneq P$ then $\height_TP \geq 2$, because $\m T$ is a height one prime ideal in $T$. 
Consider  the following exact sequences
$$ 0 \to T_P/I T_P \to  T_P/I^2 T_P \to T_P/[I^2T_P+a_1T_P] \to 0 \ \ \ (*_1) \ \ \ \mbox{and} $$
$$ 0 \to T_P/[I^2 T_P+QT_P] \to T_P/[I^2T_P+a_1T_P] \to T_P/[I^2 T_P+a_1t T_P+a_1T_P] \to 0 \ \ \ (*_2) $$
of $T_P$-modules, which follow from the exact sequences in Lemma \ref{exact}.
Then, since $\depth_{T_P} T_P/I T_P > 0$ $($notice that $T/IT \cong (A/I)[X_1,X_2,\cdots,X_d]$ is the polynomial ring with $d$ indeterminates over the ring $A/I$$)$ and $\depth_{T_P} T_P/I^2T_P=0$, we have $\depth_{T_P}T_P/[I^2T_P+a_1T_P]=0$ by the exact sequence $(*_1)$.
Notice that $T/[I^2T+QT] \cong (A/[I^2+Q])[X_1,X_2,\cdots,X_d]$ is the polynomial rings with $d$ indeterminates over the ring $A/[I^2+Q]$.
Hence, $\depth_{T_P} T_P/[I^2 T_P+a_1t T_P+a_1T_P]=0$ by the exact sequence $(*_2)$.
Then, because $a_1t \in \Ann_T(T/[I^2 T+a_1t T+a_1T])$, we have $a_1t \in P$.
Therefore $\depth_{T_P} T_P/I^2 T_P>0$ by Lemma \ref{cap}, however it is impossible. 
Thus, $P=\m T$ as required.
\end{proof}

Let us now give a proof of Proposition \ref{Ass}.

\begin{proof}[Proof of Proposition \ref{Ass}]
Take $P \in \Ass_TC$. Then we have $\m T \subseteq P$, because $\m^{\ell} C=(0)$ for some $\ell \gg 0$ by Lemma \ref{fact1} (1).
Suppose that $\m T \subsetneq P$ then $\height_TP \geq 2$, because $\m T$ is a height one prime ideal in $T$.
We look at the following exact sequences
$$ 0 \to I^2 T_P \to I^2 R_P \to C_P \to 0 \ \ \ (*_3) \ \ \ 
\mbox{and} \ \ \ 0 \to I^2 T_P \to T_P \to  T_P/I^2 T_P \to 0 \ \ \ (*_4) $$
of $T_P$-modules which follows from the canonical exact sequences
$$ 0 \to I^2 T(-1) \to I^2 R(-1) \to C \to 0 \ \ \ \mbox{and} \ \ \ 0 \to I^2 T \to  T \to T/I^2 T \to 0 $$
of $T$-modules.
We notice here that $\depth_{T_P} I^2 R_P > 0 $, because $a_1 \in A$ is a nonzero divisor on $I^2 R$.
Thanks to the depth lemma and the exact sequence $(*_3)$, we get $\depth_{T_P}I^2 T_P=1$, because $\depth_{T_P}I^2 R_P>0$ and $\depth_{T_P}C_P=0$.
Then, since $\depth_{T_P} T_P \geq 2$, we have $\depth_{T_P}T_P/I^2 T_P=0$ by the exact sequence $(*_4)$.
Therefore, we have $P=\m T$ by Lemma \ref{Ass_}, which is impossible. 
Thus, $P=\m T$ as required.
\end{proof}

The following techniques are due to M. Vaz Pinto \cite[Section 2]{VP}.

Let $L=L^{(1)}=TS_1$ then $L \cong \bigoplus_{n \geq 1}Q^{n-1}I^{2}/Q^{n}I$ and $S/L \cong C$ as graded $T$-modules.
Then there exist a canonical exact sequence
$$ 0 \to L \to S \to C \to 0 \ \ \ (\dagger) $$
of graded $T$-modules $($Definition \ref{VP}$)$.
We set $D=(I^2/QI) \otimes_A (T/\Ann_A(I^2/QI)T)$.
Notice here that $D$ forms a graded $T$-module and $T/\Ann_A(I^2/QI)T \cong (A/\Ann_A(I^2/QI))[X_1,X_2,\cdots,X_d]$ is the polynomial ring with $d$ indeterminates over the ring $A/\Ann_A(I^2/QI)$.
Let
$$ \theta:D(-1) \to L$$
denotes an epimorphism of graded $T$-modules such that $\theta(\sum_{\alpha} \overline{x_{\alpha}} \otimes X_1^{\alpha_1}X_2^{\alpha_2} \cdots X_d^{\alpha_d})=\sum_{\alpha}\overline{x_{\alpha}a_1^{\alpha_1}a_2^{\alpha_2}\cdots a_d^{\alpha_d}t^{|\alpha|+1}}$ for $x_{\alpha} \in I^2$ and $\alpha=(\alpha_1,\alpha_2,\cdots,\alpha_d) \in \Z^d$ with $\alpha_i \geq 0$ $(1 \leq i \leq d)$, where $|\alpha|=\sum_{i=1}^d\alpha_i$, and $\overline{x_{\alpha}}$ and $\overline{x_{\alpha}a_1^{\alpha_1}a_2^{\alpha_2}\cdots a_d^{\alpha_d}t^{|\alpha|+1}}$ denote the images of $x_{\alpha}$ in $I^2/QI$ and $x_{\alpha}a_1^{\alpha_1}a_2^{\alpha_2}\cdots a_d^{\alpha_d}t^{|\alpha|+1}$ in $L$.

Then we have the following lemma.

\begin{lem}\label{L1}
Suppose that $Q \cap I^2=QI$.
Then the map $\theta:D(-1) \to L$ is an isomorphism of graded $T$-modules.
\end{lem}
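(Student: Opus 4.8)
The plan is to use that $\theta$ is already known to be an epimorphism, so everything reduces to checking injectivity, and for this the only real input---beyond the hypothesis $Q\cap I^2=QI$---is that $\rmG(Q)=\bigoplus_{n\ge 0}Q^n/Q^{n+1}$ is a polynomial ring over $A/Q$ in the initial forms of $a_1,\dots,a_d$; this holds because $a_1,\dots,a_d$ is a system of parameters of the Cohen--Macaulay ring $A$, hence a regular sequence.

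First I would make the two modules explicit. Since $Q\subseteq I$ one has $QI^2\subseteq QI$, so $Q$ annihilates $I^2/QI$; therefore $T/\Ann_A(I^2/QI)T\cong (A/\Ann_A(I^2/QI))[X_1,\dots,X_d]$ is free over $A/\Ann_A(I^2/QI)$ on the monomials $X^{\alpha}=X_1^{\alpha_1}\cdots X_d^{\alpha_d}$, and consequently $[D(-1)]_n=\bigoplus_{|\alpha|=n-1}(I^2/QI)X^{\alpha}$. In degree $n$ the map $\theta$ carries a family $(\overline{x_{\alpha}})_{|\alpha|=n-1}$, with $x_{\alpha}\in I^2$, to the class of $\sum_{|\alpha|=n-1}x_{\alpha}a_1^{\alpha_1}\cdots a_d^{\alpha_d}$ in $[L]_n=Q^{n-1}I^2/Q^nI$, the quotient being meaningful because $Q^nI=Q^{n-1}\cdot QI\subseteq Q^{n-1}I^2$.

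The injectivity check then runs as follows. Suppose such a family lies in $\ker\theta$, i.e. $\sum_{|\alpha|=n-1}x_{\alpha}a_1^{\alpha_1}\cdots a_d^{\alpha_d}\in Q^nI\subseteq Q^n$. Passing to $\rmG(Q)_{n-1}=Q^{n-1}/Q^n$, the image of this element is $\sum_{|\alpha|=n-1}\overline{x_{\alpha}}X^{\alpha}$ in the free $A/Q$-module $\bigoplus_{|\alpha|=n-1}(A/Q)X^{\alpha}$; it vanishes, so $x_{\alpha}\in Q$ for every $\alpha$, hence $x_{\alpha}\in Q\cap I^2=QI$ and $(\overline{x_{\alpha}})=0$ in $[D(-1)]_n$. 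Thus $\ker\theta=0$ and $\theta$ is an isomorphism. (Alternatively one can finish by a length count: by Lemma \ref{cap2}, $Q^nI^2/Q^{n+1}I\cong (Q^nI^2+Q^{n+1})/Q^{n+1}=\bigoplus_{|\alpha|=n}\bigl((I^2+Q)/Q\bigr)X^{\alpha}$ inside $\rmG(Q)_n$, and $(I^2+Q)/Q\cong I^2/QI$, so $[L]_{n+1}$ and $[D(-1)]_{n+1}$ have equal finite length and the surjection $\theta$ must be an isomorphism.) There is no genuinely hard step here: the substantive facts---surjectivity of $\theta$ and polynomiality of $\rmG(Q)$ over $A/Q$---are in hand, and the only thing to keep straight is the grading shift $(-1)$ together with the observation that $Q$ annihilates $I^2/QI$, which is what makes $D(-1)$ the honest module $(I^2/QI)[X_1,\dots,X_d]$.
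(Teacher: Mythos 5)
Your proof is correct, and it takes a more direct route than the paper. Where the paper proves injectivity by contradiction and induction on the least degree $n$ with $[\ker\theta]_n\neq(0)$ --- splitting the monomials according to divisibility by $X_d$, writing $Q^nI=(a_1,\dots,a_{d-1})^nI+a_dQ^{n-1}I$, and descending via regular-sequence arguments together with Lemmas \ref{cap} and \ref{cap2} --- you read the coefficients off in one stroke: a degree-$n$ kernel element gives $\sum_{|\alpha|=n-1}x_\alpha a^\alpha\in Q^nI\subseteq Q^n$ with $x_\alpha\in I^2$, and since $Q^{n-1}/Q^n$ is $A/Q$-free on the monomials $\overline{a^\alpha}$ (because $a_1,\dots,a_d$ is a regular sequence, so $\rmG(Q)\cong(A/Q)[X_1,\dots,X_d]$), each $x_\alpha\in Q\cap I^2=QI$, hence the element is zero. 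This is exactly the mechanism the paper itself uses inside the proof of Lemma \ref{cap2}, applied here directly to $\theta$, and it makes the degree-by-degree induction and the $\Gamma,\Gamma'$ bookkeeping unnecessary; your alternative finish by comparing the (finite, equal) lengths of $[D(-1)]_n$ and $L_n\cong(Q^{n-1}I^2+Q^{n})/Q^{n}$, again via Lemma \ref{cap2}, is also sound. What the paper's longer argument buys is essentially nothing extra for this statement --- both proofs use only $Q\cap I^2=QI$ and the polynomiality of $\rmG(Q)$ --- so your version is a legitimate simplification; just keep the grading shift straight, note (as you do) that the well-definedness of the condition $\sum x_\alpha a^\alpha\in Q^nI$ is independent of the chosen lifts $x_\alpha$, and recall that surjectivity of $\theta$ and the identification of $[D(-1)]_n$ with $\bigoplus_{|\alpha|=n-1}(I^2/QI)X^\alpha$ are part of the setup preceding the lemma.
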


\begin{proof}
We have only to show that $\ker \theta=(0)$.
Assume that $\ker \theta \neq (0)$ and let $n \geq 2$ as the least integer so that $[\ker \theta]_n \neq (0)$ (notice that $[\ker \theta]_n=(0)$ for all $n \leq 1$).
Take $0 \neq g \in [\ker \theta]_n$ and we set
$$\Gamma=\{(\alpha_1,\alpha_2,\cdots,\alpha_{d-1}, 0) \in \Z^d \ | \ \mbox{$\alpha_i \geq 0$ for $1 \leq i \leq d-1$ and $\sum_{i=1}^{d-1}\alpha_i=n-1$} \} \ \ \ \mbox{and}$$
$$\Gamma'=\{(\beta_1,\beta_2,\cdots,\beta_d) \in \Z^d \ | \ \mbox{$\beta_i \geq 0$ for $1 \leq i \leq d-1$, $\beta_d \geq 1$, and $\sum_{i=1}^{d} \beta_i=n-1$} \}.$$
Then because
$$\Gamma \cup \Gamma'=\{(\alpha_1, \alpha_2,\cdots, \alpha_d) \in \Z^d \ | \ \mbox{$\alpha_i \geq 0$ for $1 \leq i \leq d$ and $\sum_{i=1}^{d}\alpha_i=n-1$} \}$$
we may write 
\begin{eqnarray*}
g &=& \sum_{\alpha \in \Gamma \cup \Gamma'}\overline{x_{\alpha}} \otimes X_1^{\alpha_1}X_2^{\alpha_2} \cdots X_d^{\alpha_d}\\
&=& \sum_{\alpha \in \Gamma} \overline{x_{\alpha}} \otimes X_1^{\alpha_1}X_2^{\alpha_2} \cdots X_{d-1}^{\alpha_{d-1}}
+\sum_{\beta \in \Gamma'} \overline{x_{\beta}} \otimes X_1^{\beta_1}X_2^{\beta_2} \cdots X_d^{\beta_d}, 
\end{eqnarray*}
where $\overline{x_{\alpha}}, \overline{x_{\beta}}$ denote the images of $x_{\alpha}, x_{\beta} \in I^2$ in $I^2/QI$, respectively.
We may assume that $\sum_{\beta \in \Gamma'}\overline{x_{\beta}} \otimes X_1^{\beta_1}X_2^{\beta_2} \cdots X_d^{\beta_d} \neq 0$ in $D$.
Then we have
\begin{eqnarray*}
\theta(g)=\sum_{\alpha \in \Gamma}\overline{x_{\alpha}a_1^{\alpha_1}a_2^{\alpha_2} \cdots a_{d-1}^{\alpha_{d-1}}t^n}+
\overline{\sum_{\beta \in \Gamma'} x_{\beta}a_1^{\beta_1}a_2^{\beta_2} \cdots a_d^{\beta_d}t^n}=0
\end{eqnarray*}
so that
\begin{eqnarray*}
\sum_{\alpha \in \Gamma} x_{\alpha} a_1^{\alpha_1} a_2^{\alpha_2} \cdots a_{d-1}^{\alpha_{d-1}}+\sum_{\beta \in \Gamma'} x_{\beta}a_1^{\beta_1}a_2^{\beta_2} \cdots a_d^{\beta_d} \in Q^n I.
\end{eqnarray*}
Because $Q^n I=(a_1,a_2,\cdots,a_{d-1})^n I +a_dQ^{n-1} I$ and $\beta_d \geq 1$, we may write 
$$\sum_{\alpha \in \Gamma} x_{\alpha} a_1^{\alpha_1} a_2^{\alpha_2} \cdots a_{d-1}^{\alpha_{d-1}}+a_d\left(\sum_{\beta \in \Gamma'} x_{\beta}a_1^{\beta_1}a_2^{\beta_2} \cdots a_d^{\beta_d-1}\right)= \tau +a_d \rho$$
with $\tau \in (a_1,a_2,\cdots,a_{d-1})^n I$ and $\rho \in Q^{n-1} I$.
Then because $$ a_d\left(\sum_{\beta \in \Gamma'} x_{\beta}a_1^{\beta_1}a_2^{\beta_2} \cdots a_d^{\beta_d-1}-\rho\right)=\tau -\sum_{\alpha \in \Gamma} x_{\alpha} a_1^{\alpha_1} a_2^{\alpha_2} \cdots a_{d-1}^{\alpha_{d-1}} \in (a_1,a_2,\cdots,a_{d-1})^{n-1} $$
we have, by Lemma \ref{cap2},
$$\sum_{\beta \in \Gamma'} x_{\beta}a_1^{\beta_1}a_2^{\beta_2} \cdots a_d^{\beta_d-1}-\rho \in (a_1,a_2,\cdots,a_{d-1})^{n-1} \cap Q^{n-2}I^{2} \subseteq Q^{n-1} I.$$
Therefore, $\sum_{\beta \in \Gamma'} x_{\beta}a_1^{\beta_1}a_2^{\beta_2} \cdots a_d^{\beta_d-1} \in Q^{n-1}I$ and hence $\theta(\sum_{\beta \in \Gamma'}\overline{x_{\beta}} \otimes X_1^{\beta_1}X_2^{\beta_2} \cdots X_d^{\beta_d-1})=0$.
Then, because $\sum_{\beta \in \Gamma'} \overline{x_{\beta}} \otimes X_1^{\beta_1}X_2^{\beta_2} \cdots X_d^{\beta_d-1} \in [\ker \theta]_{n-1}=(0)$, we have $\sum_{\beta \in \Gamma'}\overline{x_{\beta}} \otimes X_1^{\beta_1}X_2^{\beta_2} \cdots X_d^{\beta_d}=0$, which is contradiction.
Thus $\ker \theta =(0)$.
Consequently, the map $\theta :D(-1) \to L$ is an isomorphism.
\end{proof}

Thanks to Lemma \ref{L1}, we can prove the following result.

\begin{prop}\label{function}
Suppose that $Q \cap I^2=QI$.
Then we have
\begin{eqnarray*}
\ell_A(A/I^{n+1})&=&\e_0(I)\binom{n+d}{d}-\{\e_0(I)-\ell_A(A/I)+\ell_A(I^2/QI)\}\binom{n+d-1}{d-1}\\
&& + \ell_A(I^2/QI)\binom{n+d-2}{d-2}-\ell_A(C_n)
\end{eqnarray*}
for all $n \geq 0$.
\end{prop}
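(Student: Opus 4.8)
The plan is to compute $\ell_A(A/I^{n+1})$ by comparing $I$ with its minimal reduction $Q$ and then extracting the correction term from the Sally module through the exact sequence $(\dagger)$ and Lemma~\ref{L1}. First I would use the chain $Q^{n+1}\subseteq Q^nI\subseteq I^{n+1}$ to write
\[
\ell_A(A/I^{n+1})=\ell_A(A/Q^{n+1})-\ell_A(Q^nI/Q^{n+1})-\ell_A(I^{n+1}/Q^nI)
\]
and evaluate the three summands. Since $Q$ is a parameter ideal, its associated graded ring is the polynomial ring $(A/Q)[\overline{a_1t},\dots,\overline{a_dt}]$, so $Q^n/Q^{n+1}$ is $A/Q$-free of rank $\binom{n+d-1}{d-1}$; hence $\ell_A(A/Q^{n+1})=\e_0(Q)\binom{n+d}{d}=\e_0(I)\binom{n+d}{d}$ (using $\e_0(Q)=\e_0(I)$ because $Q$ is a reduction of $I$), and, as $Q^{n+1}=Q\cdot Q^n\subseteq Q^nI\subseteq Q^n$, the image $Q^nI/Q^{n+1}$ of $Q^nI$ in $Q^n/Q^{n+1}$ is $I\cdot(Q^n/Q^{n+1})\cong (I/Q)^{\binom{n+d-1}{d-1}}$. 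The exact sequence $0\to I/Q\to A/Q\to A/I\to 0$ gives $\ell_A(I/Q)=\e_0(I)-\ell_A(A/I)$, so $\ell_A(Q^nI/Q^{n+1})=\{\e_0(I)-\ell_A(A/I)\}\binom{n+d-1}{d-1}$.

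For the third summand, note that $I^{n+1}/Q^nI=S_n$, so $(\dagger)$ yields $\ell_A(S_n)=\ell_A(L_n)+\ell_A(C_n)$. By Lemma~\ref{L1} we have $L\cong D(-1)$, hence $L_n\cong D_{n-1}$; and since the degree-$m$ component of the polynomial ring $T/\Ann_A(I^2/QI)T\cong(A/\Ann_A(I^2/QI))[X_1,\dots,X_d]$ is free of rank $\binom{m+d-1}{d-1}$, the module $D=(I^2/QI)\otimes_A(T/\Ann_A(I^2/QI)T)$ has $D_m\cong(I^2/QI)^{\binom{m+d-1}{d-1}}$ for $m\geq 0$ and $D_m=0$ for $m<0$. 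Thus $\ell_A(L_n)=\ell_A(I^2/QI)\binom{n+d-2}{d-1}$ for $n\geq 1$ (and $L_0=0$). Putting the three pieces together, for every $n\geq 1$,
\[
\ell_A(A/I^{n+1})=\e_0(I)\binom{n+d}{d}-\{\e_0(I)-\ell_A(A/I)\}\binom{n+d-1}{d-1}-\ell_A(I^2/QI)\binom{n+d-2}{d-1}-\ell_A(C_n).
\]

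Finally I would invoke Pascal's identity $\binom{n+d-2}{d-1}=\binom{n+d-1}{d-1}-\binom{n+d-2}{d-2}$, which rewrites $-\ell_A(I^2/QI)\binom{n+d-2}{d-1}$ as $-\ell_A(I^2/QI)\binom{n+d-1}{d-1}+\ell_A(I^2/QI)\binom{n+d-2}{d-2}$; collecting the two multiples of $\binom{n+d-1}{d-1}$ produces precisely the asserted formula. The case $n=0$ is treated separately: Lemma~\ref{fact1} gives $C_0=0$ and, from $L\cong D(-1)$, $L_0=0$, so the identity collapses to $\ell_A(A/I)=\e_0(I)-\ell_A(I/Q)=\ell_A(A/I)$. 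The hypothesis $Q\cap I^2=QI$ enters only through Lemma~\ref{L1}; everything else is additivity of length and manipulation of binomial coefficients. Correspondingly, the one genuinely non-formal ingredient — already supplied by Lemma~\ref{L1} — is the identification $L\cong D(-1)$, which pins down the Hilbert function of the \emph{lower} part of the Sally module as $\binom{n+d-2}{d-1}\ell_A(I^2/QI)$; granting that, there is no remaining obstacle.
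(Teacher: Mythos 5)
Your proof is correct and follows essentially the same route as the paper: the heart of both arguments is the exact sequence $(\dagger)$ together with Lemma \ref{L1} (giving $\ell_A(S_n)=\ell_A(I^2/QI)\binom{n+d-2}{d-1}+\ell_A(C_n)$), followed by Pascal's identity. The only difference is that where the paper simply cites \cite[Proposition 2.2 (2)]{GNO} for the identity $\ell_A(A/I^{n+1})=\e_0(I)\binom{n+d}{d}-\{\e_0(I)-\ell_A(A/I)\}\binom{n+d-1}{d-1}-\ell_A(S_n)$, you derive it directly (and correctly) from the filtration $Q^{n+1}\subseteq Q^nI\subseteq I^{n+1}$ using that $\rmG(Q)$ is a polynomial ring over $A/Q$.
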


\begin{proof}
We have, for all $n \geq 0$,
\begin{eqnarray*}
\ell_A(S_n) &=& \ell_A(L_n)+\ell_A(C_n)\\
&=& \ell_A(I^2/QI)\binom{n+d-2}{d-1}+\ell_A(C_n)\\
&=& \ell_A(I^2/QI)\binom{n+d-1}{d-1}-\ell_A(I^2/QI)\binom{n+d-2}{d-2}+\ell_A(C_n)
\end{eqnarray*}
by the exact sequence
$$ 0 \to L \to S \to C \to 0 \ \ \ (\dagger)$$
and the isomorphisms $L \cong D(-1) \cong (I^2/QI) \otimes_A (A/\Ann_A(I^2/QI))[X_1,X_2,\cdots,X_d]$ of graded $T$-modules (see Lemma \ref{L1}).
Therefore we have, for all $n \geq 0$,
\begin{eqnarray*}
\ell_A(A/I^{n+1})&=& \e_0(I)\binom{n+d}{d}-\{\e_0(I)-\ell_A(A/I)\}\binom{n+d-1}{d-1}-\ell_A(S_n)\\
&=& \e_0(I)\binom{n+d}{d}-\{\e_0(I)-\ell_A(A/I)\}\binom{n+d-1}{d-1}\\
&&- \left\{\ell_A(I^2/QI)\binom{n+d-1}{d-1}-\ell_A(I^2/QI)\binom{n+d-2}{d-2}+\ell_A(C_n) \right\}\\
&=& \e_0(I)\binom{n+d}{d}-\{\e_0(I)-\ell_A(A/I)+\ell_A(I^2/QI)\}\binom{n+d-1}{d-1}\\
&&+ \ell_A(I^2/QI)\binom{n+d-2}{d-2}-\ell_A(C_n)
\end{eqnarray*}
by \cite[Proposition 2.2 (2)]{GNO}.
\end{proof}

The following result specifies \cite[Proposition 2.2 (3)]{GNO} and,  by using Proposition \ref{Ass} and \ref{function},  the proof takes advantage of the same techniques. 

\begin{prop}\label{multi}
Suppose that $Q \cap I^2=QI$. Let $\p=\m T$.
Then we have $$\e_1(I)=\e_0(I)-\ell_A(A/I)+\ell_A(I^2/QI)+\ell_{T_{\p}}(C_{\p}).$$
\end{prop}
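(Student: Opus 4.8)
The plan is to extract the coefficient $\e_1(I)$ from the explicit Hilbert–Samuel expression given in Proposition \ref{function}, by computing the contribution of the term $\ell_A(C_n)$ to the degree $d-1$ part of the associated polynomial. First I would recall from Proposition \ref{Ass} that, since $Q\cap I^2=QI$, we have $\Ass_T C\subseteq\{\p\}$ with $\p=\m T$, so that either $C=(0)$ (in which case both sides reduce to the equality of \cite{EV,GR} and $\ell_{T_\p}(C_\p)=0$) or $\dim_T C=d$. In the latter case $C_\p$ is a module of finite length over the one-dimensional local ring $T_\p$, and $\ell_{T_\p}(C_\p)$ is precisely the multiplicity of the $T$-module $C$ at its generic point, i.e. the leading coefficient (up to the usual normalization) of the Hilbert polynomial of the graded module $C$. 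Concretely, since $C$ is a graded $T$-module of dimension $d$ with $C_n=(0)$ for $n\le 1$, there is a polynomial $P_C(n)$ of degree $d-1$ with $\ell_A(C_n)=P_C(n)$ for $n\gg 0$, and its leading term is $\ell_{T_\p}(C_\p)\,\binom{n+d-2}{d-1}+(\text{lower order})$.

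The key computational step is then to substitute this into the formula of Proposition \ref{function} and read off the coefficient of $\binom{n+d-1}{d-1}$ in the resulting Hilbert–Samuel polynomial $HP_I(n)$. The terms $\e_0(I)\binom{n+d}{d}$ and $-\{\e_0(I)-\ell_A(A/I)+\ell_A(I^2/QI)\}\binom{n+d-1}{d-1}$ contribute exactly $\e_0(I)-\ell_A(A/I)+\ell_A(I^2/QI)$ to that coefficient; the term $\ell_A(I^2/QI)\binom{n+d-2}{d-2}$ is of degree $d-2$ and contributes nothing; and the term $-\ell_A(C_n)$, whose polynomial $P_C(n)$ has leading term $\ell_{T_\p}(C_\p)\binom{n+d-2}{d-1}$, contributes $+\ell_{T_\p}(C_\p)$ to the coefficient of $\binom{n+d-1}{d-1}$ after rewriting $\binom{n+d-2}{d-1}$ in terms of $\binom{n+d-1}{d-1}$ and lower binomials. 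Matching with the defining expansion $HP_I(n)=\e_0(I)\binom{n+d}{d}-\e_1(I)\binom{n+d-1}{d-1}+\cdots$ yields $\e_1(I)=\e_0(I)-\ell_A(A/I)+\ell_A(I^2/QI)+\ell_{T_\p}(C_\p)$.

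The main obstacle, and the point that needs the most care, is identifying the leading coefficient of the Hilbert function of the graded $T$-module $C$ with the length $\ell_{T_\p}(C_\p)$. This is the standard additivity/associativity formula for multiplicities of graded modules: because $\Ass_T C=\{\p\}$ by Proposition \ref{Ass}, the only minimal prime in the support of $C$ of maximal dimension is $\p=\m T$, and $T/\p\cong B$ is a polynomial ring in $d$ variables over the field $A/\m$, whose Hilbert polynomial has leading term $\binom{n+d-2}{d-1}$ in the shifted grading appropriate to $C$. Since $C$ admits a filtration with all quotients of the form $(T/\p)(a_j)$ — there being no embedded or lower-dimensional primes — the Hilbert polynomial of $C$ has leading coefficient equal to the number of such quotients, which is exactly $\ell_{T_\p}(C_\p)$. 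This is precisely where the cited technique of \cite[Proposition 2.2]{GNO} enters, and invoking it (together with Proposition \ref{Ass}) closes the argument; the remaining binomial bookkeeping is routine.
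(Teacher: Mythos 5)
Your argument is correct and is essentially the paper's own proof: the paper offers no separate argument for Proposition \ref{multi}, saying only that it follows from Propositions \ref{Ass} and \ref{function} by the technique of \cite[Proposition 2.2 (3)]{GNO}, which is exactly what you do --- identify the degree-$(d-1)$ leading coefficient of $n\mapsto\ell_A(C_n)$ with $\ell_{T_{\p}}(C_{\p})$ via the associativity formula (possible since $\Ass_TC\subseteq\{\p\}$ makes $\p=\m T$ the unique minimal prime of $\Supp_TC$ and $T/\p\cong B$ has multiplicity one), and then read off $\e_1(I)$ from the formula of Proposition \ref{function}. One wording caveat that does not affect the conclusion: a prime filtration of $C$ need not have \emph{all} quotients of the form $(T/\p)(a_j)$ even though $\Ass_TC=\{\p\}$ (the quotients are $(T/\q)(a_j)$ with $\q\supseteq\p$); however, the quotients with $\q\supsetneq\p$ have dimension at most $d-1$ and die upon localization at $\p$, so both the leading coefficient of the Hilbert polynomial of $C$ and $\ell_{T_{\p}}(C_{\p})$ are computed by the quotients with $\q=\p$, which is precisely the additivity statement you invoke.
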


Combining Lemma \ref{fact1} (3) and Proposition \ref{multi} we obtain  the following result that was proven by Elias and Valla \cite[Theorem 2.1]{EV} in the case where $I=\m$.

\begin{cor}\label{EV}
Suppose that $Q \cap I^2=QI$.
Then we have $\e_1(I) \geq \e_0(I)-\ell_A(A/I)+\ell_A(I^2/QI)$.
The equality $\e_1(I) = \e_0(I)-\ell_A(A/I)+\ell_A(I^2/QI)$ holds true if and only if $I^3=QI^2$.
When this is the case, $\e_2(I)=\e_1(I)-\e_0(I)+\ell_A(A/I)$ if $d \geq 2$, $\e_i(I)=0$ for all $3 \leq i \leq d$, and $G$ is a Cohen-Macaulay ring.
\end{cor}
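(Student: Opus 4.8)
\textbf{Proof proposal for Corollary~\ref{EV}.}
The plan is to obtain the corollary by assembling Propositions~\ref{Ass}, \ref{function}, \ref{multi} and Lemma~\ref{fact1}, and then to treat the Cohen--Macaulayness of $G$ separately via the Valabrega--Valla criterion. Throughout set $\p=\m T$. The inequality is immediate from Proposition~\ref{multi}, since $\ell_{T_{\p}}(C_{\p})\geq 0$, and by the same formula the equality $\e_1(I)=\e_0(I)-\ell_A(A/I)+\ell_A(I^2/QI)$ is equivalent to $C_{\p}=(0)$.

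For the equality case I would argue as follows. If $I^3=QI^2$ then $C=(0)$ by Lemma~\ref{fact1}(3), so $C_{\p}=(0)$ and the equality holds. Conversely, suppose $C_{\p}=(0)$ and, towards a contradiction, $C\neq (0)$. Since $C$ is a finitely generated module over the Noetherian ring $T$, $\Ass_T C\neq\emptyset$, and Proposition~\ref{Ass} forces $\Ass_T C=\{\p\}$; hence $\p\in\Supp_T C$ and $C_{\p}\neq(0)$, a contradiction. Thus $C=(0)$, and Lemma~\ref{fact1}(3) gives $I^3=QI^2$.

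Now assume the equality, equivalently $C=(0)$, and derive the numerical assertions. Substituting $\ell_A(C_n)=0$ into the formula of Proposition~\ref{function} shows that $\ell_A(A/I^{n+1})$ agrees, for all $n\geq 0$, with the polynomial $\e_0(I)\binom{n+d}{d}-\{\e_0(I)-\ell_A(A/I)+\ell_A(I^2/QI)\}\binom{n+d-1}{d-1}+\ell_A(I^2/QI)\binom{n+d-2}{d-2}$. In particular this is the Hilbert--Samuel polynomial $HP_I(n)$, and comparing with its defining expansion (using the linear independence of the binomials $\binom{n+d-i}{d-i}$) reads off $\e_1(I)=\e_0(I)-\ell_A(A/I)+\ell_A(I^2/QI)$, $\e_2(I)=\ell_A(I^2/QI)=\e_1(I)-\e_0(I)+\ell_A(A/I)$ when $d\geq 2$, and $\e_i(I)=0$ for $3\leq i\leq d$.

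Finally, for the Cohen--Macaulayness of $G$ I would invoke the Valabrega--Valla criterion for the generators $a_1,\dots,a_d$ of $Q$: it suffices to verify $(a_1,\dots,a_i)\cap I^{n+1}=(a_1,\dots,a_i)I^n$ for all $n\geq 0$ and all $1\leq i\leq d$, which yields that $a_1^{*},\dots,a_d^{*}$ is a regular sequence on $G$. The case $n=0$ is trivial since $(a_1,\dots,a_i)\subseteq Q\subseteq I$; for $n=1$ one has $(a_1,\dots,a_i)\cap I^2\subseteq Q\cap I^2=QI$, and since $Q/QI\cong(A/I)^d$ this equals $(a_1,\dots,a_i)I$; and for $n\geq 2$ the hypothesis $I^3=QI^2$ gives $I^{n+1}=Q^{n-1}I^2$ and $I^n=Q^{n-2}I^2$, so Lemma~\ref{cap} gives $(a_1,\dots,a_i)\cap I^{n+1}=(a_1,\dots,a_i)\cap Q^{n-1}I^2=(a_1,\dots,a_i)Q^{n-2}I^2=(a_1,\dots,a_i)I^n$. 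The only real content here, beyond the quoted results, is recognizing that the reduction bound $I^3=QI^2$ converts the Valabrega--Valla conditions for $n\geq 2$ into instances of Lemma~\ref{cap} (which is exactly why that lemma was proved for \emph{all} $i$, not just $i=d$) while the case $n=1$ is covered by the running hypothesis $Q\cap I^2=QI$; this interplay is the step I would expect to require the most care.
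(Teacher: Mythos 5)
Your proposal is correct, and for the heart of the statement it follows exactly the route the paper intends: the paper presents the corollary as an immediate combination of Proposition~\ref{multi} (which gives the inequality and reduces equality to $\ell_{T_{\p}}(C_{\p})=0$), Proposition~\ref{Ass} (which upgrades $C_{\p}=(0)$ to $C=(0)$ since $\Ass_TC\subseteq\{\p\}$), and Lemma~\ref{fact1}(3) (which translates $C=(0)$ into $I^3=QI^2$); your first two paragraphs are precisely this argument, and your extraction of $\e_2(I)$ and $\e_i(I)=0$ from Proposition~\ref{function} with $\ell_A(C_n)=0$ is the natural way to get the numerical assertions. The only place you genuinely diverge is the Cohen--Macaulayness of $G$, which the paper does not prove at all but delegates to the literature (Elias--Valla, and in effect Guerrieri--Rossi); your Valabrega--Valla verification, using $Q\cap I^2=QI$ for the degree-two condition and Lemma~\ref{cap} to handle $Q\cap I^{n+1}=Q\cap Q^{n-1}I^2=QI^n$ for $n\geq 2$, is a correct and self-contained substitute. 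Note only that the Valabrega--Valla criterion requires the intersection conditions just for the full ideal $Q=(a_1,\dots,a_d)$, so checking all truncations $1\leq i\leq d$ is more than is needed (though all those conditions do hold, by the same Lemma~\ref{cap}); with that observation your argument is complete.
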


Let us introduce the relationship between the depth of the module $C$ and the associated graded ring $G$ of $I$.

\begin{lem}\label{depth}
Suppose that $Q \cap I^2=QI$ and $C \neq (0)$. 
Let $s=\depth_TC$.
Then we have $\depth G \geq s-1$.
In particular, we have $\depth G=s-1$, if $s \leq d-2$.
\end{lem}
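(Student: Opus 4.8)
The plan is to bound $\depth G$ from above and below in terms of $\depth_T S$ by means of the canonical exact sequence $0\to S\to R/IT\to G\to 0$, and then to relate $\depth_T S$ to $s=\depth_T C$ through the sequence $(\dagger)$ of Definition~\ref{VP}.

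First I would compute $\depth_T L$. By Lemma~\ref{L1}, $L\cong D(-1)$. Since $QI^2\subseteq QI$ we have $Q\subseteq\Ann_A(I^2/QI)$, so $T/\Ann_A(I^2/QI)T$ is the polynomial ring $(A/\Ann_A(I^2/QI))[X_1,\dots,X_d]$ and $D\cong (I^2/QI)[X_1,\dots,X_d]$ is a polynomial module over it. As $I^2/QI$ has finite positive length (it is nonzero, for otherwise $I^3=QI^2$ and $C=(0)$ by Lemma~\ref{fact1}(3)), $D$, hence $L$, is a Cohen--Macaulay $T$-module with $\depth_T L=\dim_T L=d$. By Proposition~\ref{Ass}, $\dim_T C=d$, so $\dim_T S=d$ and $0\le s\le d$. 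Applying the depth lemma to $(\dagger)$ gives $\depth_T S\ge\min\{\depth_T L,\depth_T C\}=\min\{d,s\}=s$, and also $s=\depth_T C\ge\min\{\depth_T S,\depth_T L-1\}=\min\{\depth_T S,d-1\}$; the latter forces $\depth_T S=s$ whenever $s\le d-2$, while combined with $\depth_T S\le\dim_T S=d$ it gives $\depth_T S=d$ when $s=d$.

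Next, from $S=IR/IT$ and $G\cong R/IR$ one has the exact sequence $0\to S\to R/IT\to G\to 0$ of graded $T$-modules. The module $R/IT$ satisfies $\depth_T(R/IT)\ge d$, because $\overline{a_1t},\dots,\overline{a_dt}$ form a $T$-regular sequence on it; this is checked as in the proof of Lemma~\ref{exact}. For the first element: if $x\in I^n$ and $a_1x\in IQ^{n+1}$, then $a_1x\in Q^{n+1}$ forces $x\in Q^n$ (as $a_1A\cap Q^{n+1}=a_1Q^n$, since $\rmG(Q)$ is a polynomial ring), and then $a_1x\in a_1Q^n\cap IQ^{n+1}=a_1IQ^n$ because $\overline{a_1t}$ is a nonzerodivisor on $T/IT\cong(A/I)[X_1,\dots,X_d]$, whence $x\in IQ^n$, i.e.\ $x$ is zero in $(R/IT)_n$; the remaining $\overline{a_it}$ are treated the same way on successive quotients. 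The depth lemma applied to the displayed sequence then yields $\depth G\ge\min\{\depth_T(R/IT),\depth_T S-1\}=\depth_T S-1$, the equality holding because $\depth_T S-1\le d-1<d\le\depth_T(R/IT)$.

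Combining the two steps: if $s\le d-2$ then $\depth G\ge\depth_T S-1=s-1$; if $s=d-1$ then $\depth G\ge\depth_T S-1\ge d-2=s-1$; and if $s=d$ then $\depth G\ge d-1=s-1$. Hence $\depth G\ge s-1$ in all cases. For the sharper assertion, suppose $s\le d-2$, so $\depth_T S=s<d$ by the second paragraph; applying the depth lemma to $0\to S\to R/IT\to G\to 0$ the other way gives $\depth_T S\ge\min\{\depth_T(R/IT),\depth G+1\}=\min\{d,\depth G+1\}$, and since $\depth_T S<d$ this forces $\depth G+1\le\depth_T S=s$, i.e.\ $\depth G\le s-1$; therefore $\depth G=s-1$. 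The one non-formal ingredient is the inequality $\depth_T(R/IT)\ge d$ (the regular-sequence claim for $\overline{a_1t},\dots,\overline{a_dt}$ on $R/IT$), and that is the step I expect to need the most care in the successive-quotient induction; note it uses only that $Q$ is a parameter ideal, neither $C\neq(0)$ nor $Q\cap I^2=QI$, the latter hypotheses being what make Proposition~\ref{Ass} and Lemma~\ref{L1} available in the other steps.
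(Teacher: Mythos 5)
Your first paragraph is exactly the paper's argument: $L\cong D(-1)$ is a $d$-dimensional Cohen--Macaulay $T$-module by Lemma \ref{L1}, and the depth lemma applied to $(\dagger)$ gives $\depth_TS\ge s$ with equality when $s\le d-2$. Where you diverge is the second half: the paper simply invokes \cite[Proposition 2.2 (4)]{GNO} for the transfer between $\depth_TS$ and $\depth G$, whereas you try to reprove that transfer from $0\to S\to R/IT\to G\to 0$ together with the claim that $\overline{a_1t},\dots,\overline{a_dt}$ is a regular sequence on $R/IT$, asserted to hold with no hypotheses at all. That claim is where the proof breaks. You verify only the first element; for the later ones the degree-$n$ component of $(R/IT)/(a_1t,\dots,a_it)(R/IT)$ is $I^n/(IQ^n+(a_1,\dots,a_i)I^{n-1})$, so what is needed is: $x\in I^n$ and $a_{i+1}x\in IQ^{n+1}+(a_1,\dots,a_i)I^n$ imply $x\in IQ^n+(a_1,\dots,a_i)I^{n-1}$. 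Repeating your degree-one argument modulo $(a_1,\dots,a_i)$ only yields $x\in IQ^n+(a_1,\dots,a_i)$, and closing the gap requires $(a_1,\dots,a_i)\cap I^n\subseteq (a_1,\dots,a_i)I^{n-1}+IQ^n$ for all $n$ --- a Valabrega--Valla type condition that is essentially equivalent to a depth bound on $G$, i.e.\ to the very statement being proved. So the step you call ``the one non-formal ingredient'' carries exactly the content of the cited \cite[Proposition 2.2 (4)]{GNO}, and it is not established by ``treating the successive quotients the same way.''

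Moreover, in the generality you claim (only $Q$ a parameter reduction), the regular-sequence assertion is false. Take $A=k[[u,v]]$, $I=(u^4,u^3v,uv^3,v^4)$, $Q=(u^4,v^4)$, $a_1=u^4$, $a_2=v^4$; here $I^2=\m^8$. The class of $u^6v^2t^2$ in $I^2/(IQ^2+a_1I)$ is nonzero (no generator of $IQ^2+u^4I$ divides $u^6v^2$), yet it is killed by $a_2t$, because $v^4\cdot u^6v^2=u^4\cdot u^2v^6\in a_1I^2\subseteq IQ^3+a_1I^2$. Hence $a_1t,a_2t$ is not a regular sequence on $R/IT$. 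This example has $Q\cap I^2\neq QI$, so it does not contradict the lemma itself, but it does contradict your explicit assertion that the key step needs neither $C\neq(0)$ nor $Q\cap I^2=QI$; under the lemma's hypotheses the inequality $\depth_T(R/IT)\ge d$ remains unproved by your argument. The surrounding bookkeeping is fine --- the sequence $0\to S\to R/IT\to G\to 0$ is correct, and your derivation showing that $\depth_T(R/IT)\ge d$ would yield both $\depth G\ge\depth_TS-1$ and the equality when $\depth_TS\le d-1$ is valid --- but as written the proof is unsupported at its crucial point, and the honest repair is either to quote \cite[Proposition 2.2 (4)]{GNO} as the paper does or to supply a genuinely different proof of the depth estimate for $R/IT$.
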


\begin{proof}
Notice that $L \cong D(-1)$ by Lemma \ref{L1} and $D$ is a $d$ dimensional Cohen-Macaulay $T$-module.
Therefore, we have $s \leq \depth_TS$ and, if $s \leq d-2$ then $s=\depth_TS$ by the exact sequence
$$ 0 \to L \to S \to C \to 0 \ \ \ \ \ (\dagger).$$
Because $\depth G \geq \depth_TS-1$ and, if $\depth_TS \leq d-1$ then $\depth G = \depth_TS-1$ by \cite[Proposition 2.2 (4)]{GNO}, our assertions follow.
\end{proof}


\section{Proof of Theorem \ref{main1}}

The purpose of this section is to prove Theorem \ref{main1}.
Throughout this section, let   $I$ be an integrally closed $\m$-primary ideal.

\begin{thm}\label{rank1}
Suppose that $I$ is integrally closed. 
Then the following conditions  are equivalent: 
\begin{itemize}
\item[(1)] $\e_1(I)=\e_0(I)-\ell_A(A/I)+\ell_A(I^2/QI)+1$,
\item[(2)] $\m C=(0)$ and $\rank_B C=1$, 
\item[(3)] there exists a non-zero graded ideal $\a$ of $B$ such that $C \cong \a(-1)$ as graded $T$-modules.
\end{itemize}
\end{thm}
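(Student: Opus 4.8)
The plan is to establish the cycle of implications $(1)\Rightarrow(2)\Rightarrow(3)\Rightarrow(1)$, using the structural results of Section~2 — chiefly Proposition~\ref{Ass} (so that $\Ass_TC\subseteq\{\m T\}$), Proposition~\ref{multi} (the length formula $\e_1(I)=\e_0(I)-\ell_A(A/I)+\ell_A(I^2/QI)+\ell_{T_\p}(C_\p)$ with $\p=\m T$), and the exact sequence $(\dagger)$ together with the isomorphism $L\cong D(-1)$ from Lemma~\ref{L1}. Recall that when $I$ is integrally closed we have $Q\cap I^2=QI$, so all hypotheses of the Section~2 results are met.

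For $(1)\Rightarrow(2)$: by Proposition~\ref{multi}, condition~(1) is equivalent to $\ell_{T_\p}(C_\p)=1$, i.e.\ $C_\p$ is a simple $T_\p$-module, hence $C_\p\cong (T/\m T)_\p$ as $T_\p$-modules. Since $\Ass_TC\subseteq\{\m T\}$ by Proposition~\ref{Ass} and $\m T$ is a height-one prime with $C$ a finitely generated graded $T$-module of dimension $d$, the module $C$ is ``torsion-free of rank one'' over $B=T/\m T$ in the appropriate sense; I would deduce $\rank_BC=\ell_{T_\p}(C_\p)=1$ directly from the definition of rank. The vanishing $\m C=(0)$ is the point requiring a genuine argument: one knows $\m C_\p=(0)$ from $C_\p\cong(T/\m T)_\p$, and since the only associated prime of $C$ is $\m T$, the natural map $C\to C_\p$ is injective, so $\m C=(0)$ follows. (This is essentially the standard trick: a finitely generated module whose associated primes all contain $\m T$ embeds in its localization at $\m T$ once one checks $C$ has no embedded behaviour, which here is immediate because $\Ass_TC$ is a single prime.)

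For $(2)\Rightarrow(3)$: if $\m C=(0)$ then $C$ is a finitely generated graded $B$-module, and $\rank_BC=1$ together with $\Ass_BC=\{(0)\}$ (pulled back from $\Ass_TC=\{\m T\}$) says $C$ is a torsion-free $B$-module of rank one. A torsion-free rank-one graded module over the polynomial ring $B$ embeds, after a degree shift, into $B$; concretely, $C_2\neq(0)$ lives in degree $2$ (by Lemma~\ref{fact1}(2)) and $C$ is generated in degrees $\geq2$, and one checks $C(1)$ embeds as a graded $B$-submodule of $B$, i.e.\ $C\cong\a(-1)$ for some nonzero graded ideal $\a$ of $B$. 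I would make the shift explicit using that $[C]_n=(0)$ for $n\le1$ and $[C]_2\neq(0)$, so that the image ideal $\a$ satisfies $[\a]_n=(0)$ for $n\le0$ — automatic — and $[\a]_1\neq(0)$, which foreshadows the linear-forms description in Theorem~\ref{main1}(3).

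For $(3)\Rightarrow(1)$: from $C\cong\a(-1)$ we get $\m C=(0)$ at once, and $\rank_BC=\rank_B\a=1$ since $\a$ is a nonzero ideal of the domain $B$; hence $\ell_{T_\p}(C_\p)=1$ and Proposition~\ref{multi} gives~(1). I expect the main obstacle to be the ``torsion-free rank-one $\Rightarrow$ embeds in $B$ after a shift'' step in $(2)\Rightarrow(3)$: one must be careful that $B=(A/\m)[X_1,\dots,X_d]$ is a polynomial ring (a UFD, hence integrally closed domain) so that a rank-one torsion-free graded module is isomorphic to a graded fractional ideal, and then normalize the shift using the degree information from Lemma~\ref{fact1}(2). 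The remaining implications are bookkeeping with the length formula and the associated-prime computations already available from Section~2.
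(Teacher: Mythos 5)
Your implications $(1)\Leftrightarrow(2)$ and $(3)\Rightarrow(1)$ are fine and essentially identical to the paper's argument (Proposition \ref{multi} converts (1) into $\ell_{T_{\p}}(C_{\p})=1$, and Proposition \ref{Ass} gives the torsion-freeness and the injectivity of $C\to C_{\p}$ needed for $\m C=(0)$). The genuine gap is in $(2)\Rightarrow(3)$, at exactly the point you wave at: the normalization of the shift. From ``torsion-free of rank one'' you only get $C\cong \a(r)$ for some graded ideal $\a$ and \emph{some} $r\in\Z$, and after dividing out the gcd of the generators you may assume $\height_B\a\geq 2$ (or $\a=B$ in the free case); the degree information of Lemma \ref{fact1}(2) ($C_n=(0)$ for $n\leq 1$, $C_2\neq(0)$) then forces only $r\geq -1$, not $r=-1$. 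Your claim that ``one checks $C(1)$ embeds as a graded $B$-submodule of $B$'' is not a formal consequence of (2): for instance $M=(X_1,X_2)^2B$ is a torsion-free rank-one graded $B$-module with $M_n=(0)$ for $n\leq 1$ and $M_2\neq(0)$, yet $M\not\cong\a(-1)$ for any graded ideal $\a$ (there is no nonzero degree-zero graded map $M(1)\to B$, since $\Hom_B((X_1,X_2)^2,B)\cong B$ has no degree $-1$ component). So abstract rank-one/torsion-free/degree bookkeeping cannot yield statement (3); some arithmetic input about this particular $C$ is required.

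That missing input is where the paper uses integral closedness a second time: Proposition \ref{function} expresses $\ell_A(A/I^{n+1})$ in terms of $\ell_A(C_n)$, and computing $\ell_A(C_n)$ from the exact sequence $0\to C\to B(r)\to (B/\a)(r)\to 0$ (with $\height_B\a\geq 2$) gives $\e_2(I)=\ell_A(I^2/QI)-r$. The inequality $\e_2(I)\geq \e_1(I)-\e_0(I)+\ell_A(A/I)=\ell_A(I^2/QI)+1$ of Lemma \ref{e2} (Itoh, Sally, Rossi--Valla, valid for integrally closed ideals) then forces $r\leq -1$, and combined with $r\geq -1$ from the degree bound one gets $r=-1$, i.e. $C\cong\a(-1)$. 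Without invoking this $\e_2$-bound (or an equivalent substitute) your proof of $(2)\Rightarrow(3)$ does not close; the free case, where $C\cong B(-2)\cong X_1B(-1)$, is the only one your degree argument handles.
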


To prove Theorem \ref{rank1}, we need the following bound on $\e_2(I)$.

\begin{lem}\label{e2}$($\cite[Theorem 12]{I1}, $ $\cite[Corollary 2.5]{S2}, $ $\cite[Corollary 3.1]{RV3}, $) $
Suppose $d \geq 2$ and   let $I$ be  an integrally closed ideal,  then  $\e_2(I) \geq \e_1(I)-\e_0(I)+\ell_A(A/I). $
 
\end{lem}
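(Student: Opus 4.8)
This inequality is recorded in \cite{I1}, \cite{S2} and \cite{RV3}, so one option is simply to invoke those references; a proof within the framework of Section~2 would proceed as follows. The plan is first to rephrase the inequality in terms of the graded $T$-module $C=\rmC_Q(I)$. Since $I$ is integrally closed, $Q\cap I^2=QI$, so Propositions~\ref{function} and~\ref{multi} apply. If $C=(0)$ then $I^3=QI^2$ and the inequality holds with equality by Corollary~\ref{EV}, so assume $C\ne(0)$; then $\dim_TC=d$ by Proposition~\ref{Ass}. Write $HS_C(z)=h_C(z)/(1-z)^d$, so that $e_0(C):=h_C(1)$ and $e_1(C):=h_C'(1)$ are the first two Hilbert coefficients of $C$. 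Rewriting the equality of Proposition~\ref{function} as an identity of generating functions gives
\[
h_I(z)=\ell_A(A/I)+\{\e_0(I)-\ell_A(A/I)-\ell_A(I^2/QI)\}z+\ell_A(I^2/QI)z^2-(1-z)h_C(z),
\]
and hence, since $\e_i(I)=h_I^{(i)}(1)/i!$,
\[
\e_1(I)=\e_0(I)-\ell_A(A/I)+\ell_A(I^2/QI)+e_0(C),\qquad \e_2(I)=\ell_A(I^2/QI)+e_1(C).
\]
Therefore $\e_2(I)-\bigl(\e_1(I)-\e_0(I)+\ell_A(A/I)\bigr)=e_1(C)-e_0(C)$, and the lemma reduces to the single inequality $e_1(C)\ge e_0(C)$.

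I would prove $e_1(C)\ge e_0(C)$ by descending induction on $d$. For $d=1$ it is immediate: by Proposition~\ref{Ass} we have $\Ass_T C=\{\m T\}$ and $\dim_TC=1$, so $C$ is a Cohen--Macaulay $T$-module, and a general linear form $\xi\in T_1$ is a nonzerodivisor on $C$ (its only associated prime $\m T$ does not contain $T_1$, since $[\m T]_1=\m Qt\subsetneq Qt=T_1$); then $C/\xi C$ is a finite-length graded module concentrated in degrees $\ge2$ with Hilbert series $(1-z)HS_C(z)=h_C(z)$, so $h_C(z)=\sum_{i\ge2}h_{C,i}z^i$ with $h_{C,i}\ge0$, whence $e_1(C)-e_0(C)=\sum_{i\ge2}(i-1)h_{C,i}\ge0$. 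For $d\ge3$ I would reduce to the case $d=2$ by passing to $A/(a_1)$ for a general element $a_1$ of $Q$: this leaves $\e_0(I),\e_1(I),\e_2(I)$ and $\ell_A(A/I)$ unchanged (since $a_1$ is superficial for $I$ and $2\le d-1$) and preserves the hypothesis $Q\cap I^2=QI$, because the modular law gives $Q\cap(I^2+(a_1))=(Q\cap I^2)+(a_1)=QI+(a_1)$, hence $\overline{Q}\cap\overline{I^2}=\overline{Q}\,\overline{I}$ in $A/(a_1)$; for $a_1$ general $\overline{I}$ also stays integrally closed, so one may iterate down to $d=2$.

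The case $d=2$ is where the real content lies, and is the step I expect to be the main obstacle. Proposition~\ref{Ass} only guarantees $\depth_TC\ge1$, so $C$ need not be Cohen--Macaulay; if it is, two successive cuts by nonzerodivisor linear forms (which leave $h_C$ unchanged) reduce $C$ to a finite-length module and one concludes as above. If $\depth_TC=1$, then a generic cut $C/\xi C$ is one-dimensional and carries a nonzero $\fkM$-torsion submodule $H=H^0_{\fkM}(C/\xi C)$, where $\fkM$ is the graded maximal ideal of $T$; writing $C/\xi C$ as an extension of a one-dimensional Cohen--Macaulay module $M$ by $H$ one finds $e_0(C)=e_0(M)$ and $e_1(C)-e_0(C)=\bigl(e_1(M)-e_0(M)\bigr)-\ell_A(H)$, and since $h_M(z)$ has nonnegative coefficients and is divisible by $z^2$ one gets $e_1(M)-e_0(M)\ge e_0(M)$. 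Thus everything comes down to the length estimate $\ell_A\bigl(H^0_{\fkM}(C/\xi C)\bigr)\le e_0(C)$, equivalently to a bound on the $\xi$-torsion of $H^1_{\fkM}(C)$ --- exactly the point at which integral closedness of $I$ must be used in an essential way, beyond its formal consequence $Q\cap I^2=QI$. This is precisely what the arguments of \cite{I1}, \cite{S2} and \cite{RV3} supply, and in the write-up I would either reproduce that estimate or cite those sources for this last case.
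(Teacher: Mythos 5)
The paper offers no internal proof of this lemma: it is imported from \cite{I1}, \cite{S2} and \cite{RV3}, so your first option --- simply invoking those references --- is exactly what the paper itself does and is acceptable as such. Your in-house reformulation is correct as far as it goes, and it is consistent with the paper's own machinery: writing $HS_C(z)=h_C(z)/(1-z)^d$ (legitimate when $C\neq(0)$, since $\dim_TC=d$ by Proposition \ref{Ass}) and combining with Proposition \ref{function} does give $\e_1(I)=\e_0(I)-\ell_A(A/I)+\ell_A(I^2/QI)+h_C(1)$ and $\e_2(I)=\ell_A(I^2/QI)+h_C'(1)$, which recovers Proposition \ref{multi} and the $\e_2$-computation inside the proof of Theorem \ref{rank1}; so the lemma is indeed equivalent to $h_C'(1)\ge h_C(1)$, and the passage from $d\ge 3$ to $d=2$ by a general superficial element with $I/(a_1)$ integrally closed is the same device the paper uses in Theorem \ref{m4} (note that the integral closedness of $I/(a_1)$ is itself a nontrivial input credited to \cite{I1}).

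However, as a standalone proof the proposal has a genuine gap, and you name it yourself: in the decisive case $d=2$, $\depth_TC=1$, everything is reduced to the estimate $\ell_A\bigl(\H_{\fkM}^0(C/\xi C)\bigr)\le e_0(C)$, and this is precisely where integral closedness must enter beyond its formal consequence $Q\cap I^2=QI$ (one needs Itoh-type facts such as $Q^n\cap I^{n+1}=Q^nI$ for all $n$, cf.\ Remark \ref{tilde}); nothing in Section~2 of the paper supplies that bound, and your write-up defers it back to \cite{I1}, \cite{S2}, \cite{RV3}. So what you have is a correct translation of the inequality into the language of $C$ plus the same citation the paper makes, not an independent proof; if you intend the sketch to stand on its own, the missing length estimate is the whole content of the lemma and must either be proved or the argument openly reduced to the quotation, as in the paper.
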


\begin{proof}[Proof of Theorem \ref{rank1}]
Let $\p=\m T$ then we see that $\e_1(I)=\e_0(I)-\ell_A(A/I)+\ell_A(I^2/QI)+\ell_{T_{\p}}(C_{\p})$ by Lemma \ref{multi} and $\Ass_TC=\{ \p \}$ by Proposition \ref{Ass}.

$(1) \Rightarrow (2)$ 
Since $\ell_{T_{\p}}(C_{\p})=1$ and $\Ass_TC=\{ \p \}$, we have $\m C=(0)$ and $\rank_BC=1$. 

$(2) \Rightarrow (1)$
This is clear, because assertion $(1)$ is equivalent to saying that $\ell_{T_{\p}}(C_{\p})=1$.

$(3) \Rightarrow (2)$ This is obvious.

$(2) \Rightarrow (3)$
Because $\Ass_TC=\{ \p \}$, $C$ is torsion free $B$-module.
If $C$ is $B$-free, then we have $C \cong B(-2)$ as graded $B$-module, because $C_2 \neq (0)$ and $C_n=(0)$ for $n \leq 1$.
Hence $C \cong X_1B(-1)$ as graded $B$-modules with $0 \neq X_1 \in B_1$.

Suppose that $C$ is not $B$-free.
Then we have $d=\dim A \geq 2$.
Because $\rank_BC =1$, there exists a graded ideal $\a$ of $B$ such that $$C \cong \a(r)$$ as graded $B$-module for some $r \in \Z$.
Since every height one prime in the polynomial ring $B$ is principal, we may choose $\a$ with $\height_B \a \geq 2$.
Then since $\a_{r+2} \cong \a(r)_2 \cong C_2 \neq (0)$ and $\a_n=(0)$ for all $n \leq 0$, we have $r+2 \geq 1$ so that $r \geq -1$.
It is now enough to show that $r=-1$.
Applying the exact sequence
$$ 0 \to C \to B(r) \to (B/\a)(r) \to 0 $$
of graded $B$-modules, we have
\begin{eqnarray*}
\ell_A(C_n) &=& \ell_A(B_{r+n})-\ell_A([B/\a]_{r+n})\\
&=& \binom{n+r+d-1}{d-1}-\ell_A([B/\a]_{r+n})\\
&=& \binom{n+d-1}{d-1}+r\binom{n+d-2}{d-2}+\mbox{$($lower terms$)$}
\end{eqnarray*}
for all $n \gg 0$, because $\height_B\a \geq 2$.
Therefore, thanks to Proposition \ref{function}, we have
\begin{eqnarray*}
\ell_A(A/I^{n+1})&=&\e_0(I)\binom{n+d}{d}-\{\e_0(I)-\ell_A(A/I)+\ell_A(I^2/QI)\}\binom{n+d-1}{d-1}\\
&& + \ell_A(I^2/QI)\binom{n+d-2}{d-2}-\ell_A(C_n)\\
&=& \e_0(I)\binom{n+d}{d}-\{\e_0(I)-\ell_A(A/I)+\ell_A(I^2/QI)+1\}\binom{n+d-1}{d-1}\\
&& + \{\ell_A(I^2/QI)-r\}\binom{n+d-2}{d-2}+\mbox{$($lower terms$)$}
\end{eqnarray*}
for all $n \gg 0$.
Therefore $\e_2(I)=\ell_A(I^2/QI)-r$.
Then, because $\e_2(I) \geq \e_1(I)-\e_0(I)+\ell_A(A/I)=\ell_A(I^2/QI)+1$ by Lemma \ref{e2} we have $r \leq -1$.
Thus $r=-1$ and so $C \cong \a(-1)$ as graded $B$-modules.
\end{proof}

As a direct consequence of Theorem \ref{rank1} the following result holds true. 
\begin{prop}\label{function+}
Assume that $I$ is integrally closed.
Suppose that $\e_1(I)=\e_0(I)-\ell_A(A/I)+\ell_A(I^2/QI)+1$ and $I^4=QI^3$ and let $c=\ell_A(I^3/QI^2)$.
Then  
\begin{itemize}
\item[(1)] $1 \leq c \leq d$ and $\mu_B(C)=c$.
\item[(2)] $\depth ~G \geq d-c$ and $\depth_TC=d-c+1$,
\item[(3)] $\depth ~ G=d-c$, if $c \geq 2$.
\item[(4)] Suppose $c=1 <d$. Then  $HP_I(n) = \ell_A(A/I^{n+1}) $ for all $n \geq 0$ and 
\[ \e_i(I) =  \left\{
\begin{array}{ll}
\e_1(I)-\e_0(I)+\ell_A(A/I)+1 & \quad \mbox{if $i=2 $,} \\
1 & \quad \mbox{if $i =3$ and  $d \ge 3$,} \\
0 & \quad \mbox{if $4 \le i \le d.$}
\end{array}
\right.\]
\item[(5)] Suppose $2 \leq c < d$. Then  $HP_I(n) = \ell_A(A/I^{n+1}) $ for all $n \geq 0$ and 
\[ \e_i(I) =  \left\{
\begin{array}{ll}
\e_1(I)-\e_0(I)+\ell_A(A/I) & \quad \mbox{if $i=2 $,} \\
0 & \quad \mbox{if $i \neq c+1, c+2 $,\ \ $3 \leq i \leq d$} \\
(-1)^{c+1} & \quad \mbox{if $i =c+1, c+2$,\ \  $3 \leq i \leq d$}
\end{array}
\right.\]
 \item[(6)] Suppose $c=d$. Then  $HP_I(n) = \ell_A(A/I^{n+1}) $ for all $n \geq 2 $ and 
\[ \e_i(I) =  \left\{
\begin{array}{ll}
\e_1(I)-\e_0(I)+\ell_A(A/I) & \quad \mbox{if $i=2$ and $d \geq 2$,} \\
0 & \quad \mbox{if  $3 \leq i \leq d$}  
\end{array}
\right.\]
 
 \item[(7)] The Hilbert series $ HS_I(z)$   is given by {\small { $$ HS_I(z)=\frac{\ell_A(A/I)+\{\e_0(I)-\ell_A(A/I)-\ell_A(I^2/QI)-1\}z+\{\ell_A(I^2/QI)+1\}z^2+(1-z)^{c+1}z}{(1-z)^d}.$$}}
\end{itemize}
\end{prop}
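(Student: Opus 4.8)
The plan is to deduce Proposition \ref{function+} from the structural result Theorem \ref{rank1}, which under hypothesis (1) and the extra assumption $I^4 = QI^3$ gives $C \cong \a(-1)$ for a nonzero graded ideal $\a \subseteq B$. First I would identify $\a$ explicitly: since $C_n = (0)$ for $n \le 1$ and $C_2 \cong I^3/QI^2$ by Lemma \ref{fact1}(2), the generators of $\a$ all sit in degree one, so $\a$ is generated by $c := \ell_A(I^3/QI^2) = \dim_{A/\m} C_2$ linearly independent linear forms; hence $\a = (X_1, \dots, X_c) B$ with $1 \le c \le d$ and $\mu_B(C) = c$. The condition $I^4 = QI^3$ is exactly $S = TC_2$ (Lemma \ref{fact1}(5)), which is what forces $C$ to be generated in degree $2$ and pins down $\a$ as a linear ideal; without it $\a$ could be a higher-degree ideal. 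This settles (1).

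Next, for the depth statements (2) and (3): the Koszul resolution of $B/(X_1,\dots,X_c)$ over $B$ gives $\depth_B \a = d - c + 1$ (for $c \ge 1$; when $c = d$ this reads $1$, consistent with $\a$ being $\m T$-primary up to the irrelevant ideal), so $\depth_T C = \depth_B C = d-c+1$. Then Lemma \ref{depth} gives $\depth G \ge \depth_T C - 1 = d-c$, with equality when $\depth_T C - 1 \le d-2$, i.e. when $c \ge 1$; but one must be slightly careful at the boundary $c = 1$ where Lemma \ref{depth} only yields the inequality. For $c \ge 2$ we have $d - c \le d - 2$, so Lemma \ref{depth} gives equality $\depth G = d-c$, proving (3); for general $c \ge 1$ we retain $\depth G \ge d-c$, proving (2).

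For the Hilbert-series and coefficient statements (4)--(7): I would plug the isomorphism $C \cong (X_1,\dots,X_c)B(-1)$ into Proposition \ref{function}. From the exact sequence $0 \to C \to B(-1) \to (B/\a)(-1) \to 0$ and the Koszul resolution of $B/\a$ one computes $\sum_n \ell_A(C_n) z^n = \frac{z - (1-z)^{-d}\cdot(1-z)^{c}z \cdot \text{(correction)}}{\cdots}$ — more cleanly, $HS$ of $C$ equals $\frac{z^2}{(1-z)^d} - \frac{z^2(1 - (1-z)^c)}{(1-z)^d} = \frac{z^2(1-z)^c}{(1-z)^d}\cdot\frac{1}{?}$; in any case the generating function of $\ell_A(C_n)$ is a rational function with numerator a polynomial, and substituting into the formula of Proposition \ref{function} yields the claimed closed form for $HS_I(z)$ in (7), after using $\ell_A(I^2/QI) = \e_0(I) + (d-1)\ell_A(A/I) - \ell_A(I/I^2)$ to normalize the linear coefficient. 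Statements (4), (5), (6) then follow by reading off $\e_i(I) = h_I^{(i)}(1)/i!$ from the numerator $h_I(z) = \ell_A(A/I) + \{\e_0 - \ell_A(A/I) - \ell_A(I^2/QI) - 1\}z + \{\ell_A(I^2/QI)+1\}z^2 + (1-z)^{c+1}z$: the term $(1-z)^{c+1}z$ contributes nothing to derivatives of order $\le c$ at $z=1$ beyond what the linear factor $z$ forces, and contributes $\pm 1$ to orders $c+1$ and $c+2$; one also checks the polynomial identity $HP_I(n) = \ell_A(A/I^{n+1})$ for the stated ranges of $n$ by comparing degrees (the discrepancy $\ell_A(C_n)$ vanishes for $n$ large, and the range where it is forced to vanish depends on $c$ versus $d$).

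The main obstacle I anticipate is the bookkeeping in (4)--(6): carefully tracking for which $n$ the ``lower order'' term $\ell_A(C_n)$ genuinely vanishes (so that $HP_I(n)$ agrees with the length), distinguishing the cases $c < d$ (where $(1-z)^{c+1}$ still divides by enough powers of $1-z$ that $\ell_A(C_n)$ is eventually a polynomial of the predicted degree and the agreement starts at $n = 0$) from $c = d$ (where agreement only starts at $n = 2$), and getting the signs $(-1)^{c+1}$ right in the two top coefficients. The conceptual content is entirely in Theorem \ref{rank1} and Lemma \ref{depth}; everything here is the careful extraction of numerical consequences from the clean module-theoretic description $C \cong (X_1,\dots,X_c)B(-1)$.
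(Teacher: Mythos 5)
Your items (1), (2), (4)--(7) follow essentially the paper's route: identify $C\cong(X_1,\dots,X_c)B(-1)$ via Theorem \ref{rank1} plus $C=TC_2$ (Lemma \ref{fact1}(5)), read $\depth_TC=d-c+1$ off the sequence $0\to C\to B(-1)\to (B/\a)(-1)\to 0$, and feed the resulting values of $\ell_A(C_n)$ and of $HS_C(z)=\frac{z-(1-z)^cz}{(1-z)^d}$ into Proposition \ref{function} and the formula $HS_I(z)=\frac{\ell_A(A/I)+\{\e_0(I)-\ell_A(A/I)\}z}{(1-z)^d}-(1-z)HS_S(z)$; your generating-function sketch is garbled in places but the plan is the right one and the bookkeeping you flag is exactly what the paper carries out.

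There is, however, a genuine gap in (3): you claim that for all $c\ge 2$ the equality $\depth G=d-c$ follows from Lemma \ref{depth}, but that lemma gives equality only when $s=\depth_TC\le d-2$, i.e.\ when $d-c+1\le d-2$, i.e.\ $c\ge 3$. You have misread the hypothesis as ``$\depth_TC-1\le d-2$''. For $c=2$ one has $s=d-1$, and Lemma \ref{depth} then yields only the inequality $\depth G\ge d-2$; indeed in its proof the identification $s=\depth_TS$ breaks down precisely at $s=d-1$, where $S$ could a priori be Cohen--Macaulay. The paper closes this case by a separate argument: if $\depth G\ge d-1$, then $S$ is a Cohen--Macaulay $T$-module by \cite[Proposition 2.2]{GNO}, and applying $\H_M^{\bullet}(-)$ (with $M=\m T+T_+$) to $0\to L\to S\to C\to 0$ gives a monomorphism $\H_M^{d-1}(C)\hookrightarrow \H_M^{d}(L)$; since $C\cong(X_1,X_2)B(-1)$ one has $\H_M^{d-1}(C)\cong \H_M^{d-2}(B/(X_1,X_2)B)(-1)$, which is nonzero in degree $-d+3$, whereas $L\cong D(-1)$ forces $[\H_M^{d}(L)]_n=(0)$ for all $n\ge -d+2$ --- a contradiction. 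Without some such argument your proof establishes (3) only for $c\ge 3$, so the $c=2$ case is missing.
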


\begin{proof}
$(1)$ We have $C=TC_2$, since $I^4=QI^3$ (c.f. Lemma \ref{fact1} (5)).
Therefore, thanks to Theorem \ref{rank1}, $C \cong \a(-1)$ as graded $T$-modules, where $\a=(X_1,X_2,\cdots,X_c)B$ is an ideal generated by linear forms $\{X_i\}_{1 \leq i \leq c}$ of $B$.
Hence, we get $1 \leq c \leq d$ and $\mu_B(C)=c$.

$(4)$, $(5)$, $(6)$ Let us consider the exact sequence
$$ 0 \to C \to B(-1) \to (B/\a)(-1) \to 0 \ \ \ \ (*_5) $$
of graded $B$-modules.
Then, we have 
\begin{eqnarray*}
\ell_A(C_n) &=& \ell_A(B_{n-1})-\ell_A([B/\a]_{n-1})\\
&=& \binom{n-1+d-1}{d-1}-\binom{n-1+d-c-1}{d-c-1}\\
&=& \binom{n+d-1}{d-1}-\binom{n+d-2}{d-2}-\binom{n+d-c-1}{d-c-1}+\binom{n+d-c-2}{d-c-2}
\end{eqnarray*}
for all $n \geq 0$ $($resp. $n\geq 2)$ if $1 \leq c \leq d-1$ $($resp. $c=d)$.
Therefore, our assertions $(4)$, $(5)$, and $(6)$ follow by Proposition \ref{function}.

(7) We have
$$ HS_C(z) =HS_B(z)z-HS_{B/\a} (z)z =\frac{z-(1-z)^cz}{(1-z)^d} $$
by the above exact sequence $(*_5)$, where $HS_{*} (z) $ denotes the Hilbert series of the graded modules.
We also have
$$HS_S(z)=HS_L(z)+ HS_C(z)=\frac{\{\ell_A(I^2/QI)+1\}z-(1-z)^{c}z}{(1-z)^d}$$
by the exact sequence
$$ 0 \to L \to S \to C \to 0  \ \ \ (\dag)$$
and isomorphisms $L \cong D(-1) \cong (I^2/QI)\otimes_A (A/\Ann_A(I^2/QI))[X_1,X_2,\cdots,X_d](-1)$ of graded $T$-modules $($Lemma \ref{L1}$)$.
Then, because
$$ HS_I(z) =\frac{\ell_A(A/I)+\{\e_0(I)-\ell_A(A/I)\}z}{(1-z)^d}-(1-z) HS_S(z) $$
$($\cite{VP}, \cite[Proposition 6.3]{RV3}$)$, we can get the required result.

We prove now $(2)$, $(3). $
We have $\depth_TC=d-c+1$ by the exact sequence $(*_5)$ so that $\depth G \geq d-c$ and, if $c \geq 3, $ then $\depth G=d-c$ by Lemma \ref{depth}.
Let us consider the case where $c=2$ and we need to show   $\depth G=d-2$.
Assume   $\depth G \geq d-1, $ then $S$ is a Cohen-Macaulay $T$-module by \cite[Proposition 2.2]{GNO}.
Taking the local cohomology functors $\H_M^i(*)$ of $T$ with respect to the graded maximal ideal $M=\m T+T_+$ to the above exact sequence $(\dag)$ of graded $T$-module,  we get  a  monomonophism $$\H_{M}^{d-1}(C) \hookrightarrow \H_{M}^d(L)$$ of graded $T$-module.
Because $C \cong (X_1,X_2)B(-1)$, we have $\H_{M}^{d-1}(C) \cong \H_{M}^{d-2}(B/(X_1,X_2)B)(-1)$ as graded $T$-modules so that $[\H_{M}^{d-1}(C)]_{-d+3} \neq (0)$ $($notice that $B/(X_1,X_2)B \cong (A/\m)[X_3,X_4,\cdots,X_d]$$)$.
On the other hand, we have $[\H_M^d(L)]_{n}=(0)$ for all $n \geq -d+2$, because $L \cong D(-1) \cong (I^2/QI) \otimes_A (A/\Ann_A(I^2/QI))[X_1,X_2,\cdots,X_d](-1)$ by Lemma \ref{L1}.
However, it is impossible.
Therefore, $\depth~ G=d-2$ if $c=2$.
\end{proof}

We prove now  Theorem \ref{main1}.
Assume assertion $(1)$ in Theorem \ref{main1}. 
Then we have an isomorphism $C \cong \a(-1)$ as graded $B$-modules for a graded ideal $\a$ in $B$ by Theorem \ref{rank1}.
Once we are able to show $I^4=QI^3$, then, because $C=TC_2$ by Lemma \ref{fact1} (5), the ideal $\a$ is generated by linearly independent linear forms $\{X_i\}_{1 \leq i \leq c}$ of $B$ with $c=\ell_A(I^3/QI^2)$ $($recall that $\a_1 \cong C_2 \cong I^3/QI^2$ by Lemma \ref{fact1} $(2)$$)$.
Therefore, the implication $(1) \Rightarrow (3)$ in Theorem \ref{main1} follows.
We also notice that, the last assertions of Theorem \ref{main1} follow by Proposition \ref{function+}.

Thus our Theorem \ref{main1} has been proven modulo the following theorem.

\begin{thm}\label{m4}
Assume that $I$ is integrally closed.
Suppose that $\e_1(I)=\e_0(I)-\ell_A(A/I)+\ell_A(I^2/QI)+1$. Then $I^4=QI^3$.
\end{thm}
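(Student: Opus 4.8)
\textbf{Proof proposal for Theorem \ref{m4}.}

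The plan is to argue by contradiction: assume $I^4 \neq QI^3$ and derive a violation of the bound $\e_2(I) \geq \e_1(I)-\e_0(I)+\ell_A(A/I)$ from Lemma \ref{e2} (or, when $d=1$, a violation of a more elementary positivity statement, so I should first reduce to $d\geq 2$ by the standard device of passing to $A/(a_2,\dots,a_d)$ modulo a superficial sequence, checking that integral closedness and the hypothesis on $\e_1$ are preserved; indeed Proposition \ref{multi} shows $\ell_{T_\p}(C_\p)=1$ is unchanged, and $Q\cap I^2=QI$ persists). From Theorem \ref{rank1} we already know $\m C=(0)$ and $C \cong \a(-1)$ for a graded ideal $\a \subseteq B$ with $\height_B\a\geq 1$ and $\a_1 \cong C_2 = I^3/QI^2 \neq (0)$. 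So the structure of $C$ is already quite rigid; the only thing missing is that $C$ is generated in degree $2$, equivalently (Lemma \ref{fact1}(5)) that $I^4=QI^3$.

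First I would extract the numerical consequence of $C\cong\a(-1)$ as in the proof of Proposition \ref{function+}: writing $0\to C\to B(-1)\to (B/\a)(-1)\to 0$ and feeding $\ell_A(C_n)$ into Proposition \ref{function}, one computes $\e_2(I)$ in terms of the Hilbert polynomial of $B/\a$. Concretely $\e_2(I) = \ell_A(I^2/QI) + (\text{first nonzero higher coefficient contribution of } B/\a)$. If $\a$ were generated by linear forms (the $I^4=QI^3$ case) this forces $\e_2(I)=\ell_A(I^2/QI)+1$, matching the Lemma \ref{e2} bound with equality. The key point is therefore to show that if $C$ is \emph{not} generated in degree $2$ — i.e. $\a$ needs a generator in degree $\geq 2$, or $\height_B \a \geq 2$ with $\a$ not linear — then the Hilbert function of $B/\a$ is strictly larger in the relevant coefficient, which drives $\e_2(I) < \ell_A(I^2/QI)+1$, contradicting Lemma \ref{e2}. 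Since $\a_1\cong I^3/QI^2$ is a nonzero space of linear forms, the subideal $\a'=B\a_1$ generated by those linear forms satisfies $\a'\subseteq\a$; if $\a'\subsetneq\a$ then $\ell_A([B/\a]_n) < \ell_A([B/\a']_n)$ for $n\gg 0$, and tracking this through Proposition \ref{function} yields $\e_2(I)\le \ell_A(I^2/QI)$, the desired contradiction. Hence $\a=\a'=B\a_1$ is generated in degree $1$, so $C=TC_2$, and Lemma \ref{fact1}(5) gives $I^4=QI^3$.

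The step I expect to be the main obstacle is making the inequality $\ell_A([B/\a]_n) < \ell_A([B/(B\a_1)]_n)$ for large $n$ actually bite at the level of the \emph{second} Hilbert coefficient $\e_2$ rather than merely at lower-order coefficients — because a priori $\a$ and $B\a_1$ could have the same height and differ only in lower-dimensional components, which would only affect $\e_i$ for larger $i$. To handle this cleanly I would first show $\height_B\a=\height_B(B\a_1)$, then note that $\a/B\a_1$ is a nonzero $B$-module supported on $V(B\a_1)$; if $\dim(\a/B\a_1)$ equals $\dim B - \height_B\a$ (codimension equal to that of $\a$ itself), the leading terms separate and we win at $\e_2$; the potentially delicate case is when $\a/B\a_1$ is supported in strictly smaller dimension. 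Here I would use torsion-freeness of $C\cong\a(-1)$ as a $B$-module (which holds since $\Ass_TC=\{\m T\}$ by Proposition \ref{Ass}): a nonzero torsion-free module over the polynomial ring $B$ has no embedded or lower-dimensional associated primes relative to... more precisely, $\a$ and any nonzero subideal $B\a_1$ have the same height because $B$ is a domain, and the quotient $\a/B\a_1$, being a submodule-quotient inside the torsion-free module $\a$ modulo the \emph{reflexive-like} piece $B\a_1$, can be controlled — alternatively one argues that if $\a\neq B\a_1$ then there is a homogeneous $f\in\a\setminus B\a_1$ of some degree $\geq 2$, and $f$ together with $\a_1$ already shows $[B/B\a_1]_n - [B/\a]_n$ grows like a polynomial of degree $\geq \dim B - \height_B\a - 1 = d-2$, which is exactly the order contributing to $\e_2$. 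Combining this growth estimate with Proposition \ref{function} and Lemma \ref{e2} closes the argument.
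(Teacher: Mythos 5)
Your reduction to $d\ge 2$ and your use of Theorem \ref{rank1} to get $C\cong\a(-1)$ are fine, but the central step of your plan is not: it is simply not true that $\a\supsetneq B\a_1$ forces $\e_2(I)\le \ell_A(I^2/QI)$. Feed $C\cong\a(-1)$ into Proposition \ref{function}: the term carrying the difference between $\a$ and any linear subideal is $\ell_A([B/\a]_{n-1})$, a polynomial in $n$ of degree $\dim(B/\a)-1$ for $n\gg 0$. After the normalization made in the proof of Theorem \ref{rank1}, either $\a$ is principal generated by a linear form (and you are done) or $\height_B\a\ge 2$, in which case this term has degree at most $d-3$ and therefore never touches $\e_2(I)$; one always gets $\e_2(I)=\ell_A(I^2/QI)+1$, i.e.\ exactly the Itoh bound of Lemma \ref{e2}, with no contradiction available. (Even without the normalization the sign goes the wrong way: a height-one component makes $\e_2$ \emph{larger}, not smaller.) The decisive test case is $\a=(Y_1,\dots,Y_{d-1},Y_d^{2})$, where $B/\a$ has finite length: here $\a\ne B\a_1=(Y_1,\dots,Y_{d-1})$, yet all Hilbert coefficients $\e_0,\dots,\e_d$ coincide with those of the legitimate case $\a=B_+$ ($c=d$ in Theorem \ref{main1}). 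So no inequality among Hilbert coefficients --- not Lemma \ref{e2}, nor any bound on higher $\e_i$ --- can rule this case out; your ``growth of degree $\ge d-2$'' estimate for $[B/B\a_1]_n-[B/\a]_n$ is also incorrect (in this case the difference is the constant $1$).

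This is precisely why the paper's proof of Theorem \ref{m4} cannot stay numerical. It inducts on $d$ via a superficial element $a_1$ chosen so that $I/(a_1)$ stays integrally closed; if $\depth \rmG(I')>0$ the Sally machine finishes, and otherwise Proposition \ref{function+} (2) applied to $A'$ gives $\ell_A(I^3/QI^2)\ge d-1$, so that $\a=(Y_1,\dots,Y_{d-1},Y_d^{\ell})$ for some $\ell\ge 1$. The case $\ell\ge 3$ dies by Nakayama, and the genuinely hard case $\ell=2$ is eliminated by an explicit element computation with representatives $z_i\in I^3$, $z_d\in I^4$, using Lemmas \ref{cap} and \ref{cap2} and, crucially, the integral closedness of $I$ once more through $Q^2\cap I^3\subseteq Q^2\cap\overline{Q^3}=Q^2I$. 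That structural, element-level use of integral closedness is the missing ingredient in your proposal, and without it the statement cannot be reached from the Hilbert-coefficient data alone.
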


\begin{proof}
We proceed by induction on $d$.
Suppose that $d=1$. Then the result follows by  \cite[Proposition 4.6]{HM} since $\e_1(I) = \sum_{i \ge 0} \ell_A(I^{i+1}/QI^i). $
 
Assume that $d \geq 2$ and that our assertion holds true for $d-1$.
Since the residue class field $A/\m$ of $A$ is infinite, without loss of generality, we may assume that $a_1$ is superficial element of $I$ and $I/(a_1)$ is integrally closed $($c.f. \cite[page. 648]{I1}, \cite[Proposition 1.1]{RV3}$)$.
We set $A'=A/(a_1), \ I'=I/(a_1), \ Q'=Q/(a_1)$.
We then have $\e_1(I')=\e_0(I')-\ell_{A'}(A'/I')+\ell_{A'}(I'^2/Q'I')+1$
because $\e_i(I')=\e_i(I)$ for $0 \leq i \leq d-1$, $\ell_{A'}(A'/I')=\ell_A(A/I)$, and since $(a_1) \cap I^2=a_1I$, $\ell_{A'}(I'^2/Q'I')=\ell_A(I^2/QI)$.
Then the inductive assumption on $d$ says that $I'^4=Q'I'^3$ holds true.
If $\depth \rmG(I')>0$ then, thanks to Sally's technique (c.f. \cite{S1}, \cite[Lemma 2.2]{HM}), $a_1t$ is  a non-zero divisor on $G$. 
Then we have $I^4=QI^3$.

Assume that $\depth \rmG(I')=0$.
Then because $\e_1(I')=\e_0(I')-\ell_{A'}(A'/I')+\ell_{A'}(I'^2/Q'I')+1$ and $I'^4=Q'I'^3$, we have $\ell_{A'}({I'}^3/{Q'}{I'}^2)=d-1$ by Proposition \ref{function+} (2).
Since ${I'}^3/{Q'}{I'}^2$ is a homomorphic image of $I^3/QI^2$, we have $\ell_A(C_2)=\ell_A(I^3/QI^2) \geq d-1$.
Let us now take an isomorphism
$$ \varphi:C \to \a(-1) $$
of graded $B$-modules, where $\a$ is a graded ideal of $B$ $($c.f. Theorem \ref{rank1}$)$.
Then since $\ell_A(\a_1)=\ell_A(C_2) \geq d-1$, we have $Y_1,Y_2,\cdots,Y_{d-1} \in \a$ where $Y_1,Y_2,\cdots,Y_{d-1}$ denote linearly independent linear forms of $B$, which we enlarge to a basis $Y_1,Y_2,\cdots,Y_{d-1}, Y_d$ of $B_1$.
If $\a=(Y_1,Y_2,\cdots,Y_{d-1})B$ then, because $C=TC_2$, we have $I^4=QI^3$ by Lemma \ref{fact1} (5).
Assume that $\a \neq (Y_1,Y_2,\cdots,Y_{d-1})B$.
Then, since $B=k[Y_1,Y_2,\cdots,Y_{d}]$, the ideal $\a/(Y_1,Y_2,\cdots,Y_{d-1})$ is principal so that $\a=(Y_1,Y_2,\cdots,Y_{d-1}, Y_d^{\ell})B$ for some $\ell \geq 1$.

We need the following.

\begin{claim}\label{claim1}
We have $\ell=1$ or $\ell= 2$.
\end{claim}

\begin{proof}[Proof of Claim \ref{claim1}]
Assume that $\ell \geq 3$.
Then $I^4/[QI^3+\m I^4] \cong [C/MC]_3=(0)$, where $M=\m T+T_+$ denotes the graded maximal ideal of $T$.
Therefore $I^4=QI^3$ by Nakayama's lemma so that $C=B C_2$, which is impossible.
Thus $\ell=1$ or $\ell= 2$.
\end{proof}

We have to show that $\ell=1$.
Assume that $\ell=2$.
Let us write, for each $1 \leq i \leq d$, $Y_i=\overline{b_it}$ with $b_i \in Q$, where $\overline{b_it}$ denotes the image of $b_it \in T$ in $B$.
We notice here that $Q=(b_1,b_2,\cdots,b_d)$, because $Y_1,Y_2,\cdots,Y_{d}$ forms a $k$-basis of $B_1$.

Let us choose elements $f_i \in C_2$ for $1 \leq i \leq d-1$ and $f_d \in C_3$ so that $\varphi(f_i)=Y_i$ for $1 \leq i \leq d-1$ and $\varphi(f_d)=Y_d^2$.
Let $z_i \in I^3$ for $1 \leq i \leq d-1$ and $z_d \in I^4$ so that $\{f_i\}_{1 \leq i \leq d-1}$ and $f_d$ are, respectively, the images of $\{z_it^2\}_{1 \leq i \leq d-1}$ and $z_dt^3$ in $C$.
Let us now consider the relations
$Y_d^2f_i=Y_if_d$
in $C$ for $1 \leq i \leq d-1$, that is
$$b_d^2z_i-b_iz_d \in Q^3I^2$$
for $1 \leq i \leq d-1$.
Notice that $$ (b_1, b_d) \cap Q^3 I^2=(b_1,b_d)Q^2 I^2$$ by Lemma \ref{cap}
and write
$$ b_d^2z_1-b_1z_d=b_1\tau_1+b_d\tau_d  \ \ \ \ \ (1)$$
with $\tau_1,\tau_d \in Q^2I^2$.
Then we have $$b_d(b_dz_1-\tau_d)=b_1(z_d+\tau_1)$$
so that $ b_dz_1-\tau_d \in (b_1)$ because $b_1,b_d$ forms a regular sequence on $A$.
Since $\tau_d \in (b_1,b_d) \cap Q^2I^2=(b_1,b_d)QI^2$ by Lemma \ref{cap}, there exist elements $\tau_1', \tau_d' \in Q I^2$ such that $\tau_d=b_1 \tau_1'+b_d \tau_d'$.
Then   by the equality $(1)$ we have 
$$ b_d^2(z_1-\tau'_d)=b_1(z_d+\tau_1+b_d\tau'_1) \ \ \ \ \ (2)$$
so that we have $z_1-\tau_d' \in (b_1)$.
Hence $z_1 \in QI^2+(b_1)$.
The same argument works for each $1 \leq i \leq d-1$ to see $z_i \in QI^2+(b_i)$.
Therefore, because $I^3 = QI^2+(z_1,z_2,\cdots,z_{d-1})$, we have $I^3 \subseteq b_dI^2+(b_1,b_2,\cdots,b_{d-1})$ and hence
$$ I^4 \subseteq b_d^2I^2+ (b_1,b_2,\cdots,b_{d-1}).$$
Then, because $z_d+\tau_1+b_d\tau'_1 \in I^4$, there exist elements $h \in I^2$ and $\eta \in (b_1,b_2,\cdots,b_{d-1})$ such that $z_d+\tau_1+b_d\tau'_1=b_d^2h+\eta$.
Then we have $$b_d^2(z_1-\tau'_d -b_1h)=b_1 \eta \ \ \ \ \ (3) $$ by the equality $(2)$.
Since $b_1,b_2,\cdots,b_d$ is a regular sequence on $A$, $\eta \in (b_d^2) \cap (b_1,b_2,\cdots, b_{d-1})=b_d^2(b_1,b_2,\cdots, b_{d-1})$.
Write $\eta =b_d^2\eta'$ with $\eta' \in (b_1,b_2,\cdots, b_{d-1})$, then we have
$$ b_d^2(z_1-\tau'_d -b_1h)=b_1b_d^2\eta'$$
by the equality (3), so that $z_1-\tau'_d -b_1h=b_1\eta'$.
Then we have $b_1\eta'=z_1-\tau'_d -b_1h \in Q^2 \cap I^3=Q^2I$, since $Q^2 \cap I^3 \subseteq Q^2 \cap \overline{Q^3}=Q^2 \overline{Q}=Q^2I$ (c.f. \cite{H,I2}), where $\overline{J}$ denotes the integral closure of an ideal $J$.
Hence $z_1 \in QI^2$, because $\tau'_1, b_1h, b_1\eta' \in QI^2$.
Therefore $f_1=0$ in $C$, which is impossible.
Thus $\ell=1$ so that we have $\a=(X_1,X_2,\cdots,X_d)B$.
Therefore $C=TC_2$, that is $I^4=QI^3$.
This completes the proof of Theorem \ref{m4} and that of Theorem \ref{main1} as well.
\end{proof}

\section{Consequences}

The purpose of this section is to present  some consequences of Theorem \ref{main1}.
Let us begin with the following which is exactly the case where $c=1$ in Theorem \ref{main1}.

\begin{thm}\label{B(-2)}
Assume that $I$ is integrally closed. Then the following conditions are equivalent.
\begin{itemize}
\item[(1)] $C \cong B(-2)$ as graded $T$-modules.
\item[(2)] $\e_1(I)=\e_0(I)-\ell_A(A/I)+\ell_A(I^2/IQ)+1$, and if $d \geq 2$ then $\e_2(I) \neq \e_1(I)-\e_0(I)+\ell_A(A/I)$.
\item[(3)] $\ell_A(I^3/QI^2)=1$ and $I^4=QI^3$.
\end{itemize}
When this is the case, the following assertions follow.
\begin{itemize}
\item[(i)] $\depth G \geq d-1$.
\item[(ii)] $\e_2(I)=\e_1(I)-\e_0(I)+\ell_A(A/I)+1$ if $d \geq 2$.
\item[(iii)] $\e_3(I)=1$ if $d \geq 3$, and $\e_i(I)=0$ for $4 \leq i \leq d$.
\item[(iv)] The Hilbert series $HS_I(z) $  is given by
$$ HS_I(z) =\frac{\ell_A(A/I)+\{\e_0(I)-\ell_A(A/I)-\ell_A(I^2/QI)\}z+\{\ell_A(I^2/QI)-1\}z^2+z^3}{(1-z)^d}. $$
\end{itemize}
\end{thm}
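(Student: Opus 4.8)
The plan is to deduce Theorem \ref{B(-2)} as the specialization of Theorem \ref{main1} to the case $c=1$, so the work is essentially bookkeeping with the equivalences and formulas already established. First I would prove $(1)\Leftrightarrow(3)$: if $C\cong B(-2)$, then $C=TC_2$ gives $I^4=QI^3$ by Lemma \ref{fact1}(5), and $\ell_A(I^3/QI^2)=\ell_A(C_2)=\ell_A(B_0)=1$; conversely, if $\ell_A(I^3/QI^2)=1$ and $I^4=QI^3$, then $C\neq(0)$ and $C$ is generated in degree $2$ by a single element, so the surjection $B(-2)\twoheadrightarrow C$ must be an isomorphism once we know $C$ is torsion-free over $B$ — and this follows because $\Ass_TC\subseteq\{\m T\}$ by Proposition \ref{Ass} (note $I$ integrally closed forces $Q\cap I^2=QI$), combined with $\rank_BC=1$, which in turn holds by Proposition \ref{multi} once we check $(3)\Rightarrow(1)$ of Theorem \ref{rank1}, i.e. that $\e_1(I)=\e_0(I)-\ell_A(A/I)+\ell_A(I^2/QI)+1$.

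To close that loop I would run the numerics: assuming $(3)$, Proposition \ref{function} together with the Hilbert series of $C\cong B(-2)$ (which is $\frac{z^2}{(1-z)^d}$) gives $\e_1(I)=\e_0(I)-\ell_A(A/I)+\ell_A(I^2/QI)+1$ directly, hence $(1)$ and $(3)$ are equivalent and each implies condition $(1)$ of Theorem \ref{main1} with $c=\ell_A(I^3/QI^2)=1$. Then all of the assertions $(i)$–$(iv)$ are immediate: $(i)$ is $\depth G\geq d-c=d-1$ from Theorem \ref{main1}(i); $(ii)$ and $(iii)$ are the $c=1<d$ case of Theorem \ref{main1}(iii) when $d\geq 2$ (with the $d\geq 3$ refinement for $\e_3$), while for $d=2$ only $(ii)$ is asserted and it follows from the displayed Hilbert series or from $\e_2(I)=\ell_A(I^2/QI)-r$ with $r=-1$ as in the proof of Theorem \ref{rank1}; and $(iv)$ is Theorem \ref{main1}(vi) with $c=1$, since $(1-z)^{c+1}z=(1-z)^2z=z-2z^2+z^3$ collapses the numerator to $\ell_A(A/I)+\{\e_0(I)-\ell_A(A/I)-\ell_A(I^2/QI)\}z+\{\ell_A(I^2/QI)-1\}z^2+z^3$.

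The remaining implication is $(2)\Leftrightarrow(1)$. The forward direction $(1)\Rightarrow(2)$: the first equality in $(2)$ is what we just established, and when $d\geq 2$ we get $\e_2(I)=\e_1(I)-\e_0(I)+\ell_A(A/I)+1\neq\e_1(I)-\e_0(I)+\ell_A(A/I)$ from $(ii)$. For $(2)\Rightarrow(1)$, assume the first equality in $(2)$; by Theorem \ref{rank1} we have $C\cong\a(-1)$ for a nonzero graded ideal $\a\subseteq B$, and by Theorem \ref{main1} $I^4=QI^3$ and $\a=(X_1,\dots,X_c)B$ with $c=\ell_A(I^3/QI^2)$. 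It suffices to show $c=1$. If $d=1$ then $B$ is a polynomial ring in one variable and the only possibility is $c=1$ (any nonzero proper graded ideal generated by linear forms is $(X_1)$, and $c\leq d=1$). If $d\geq 2$, then $c\geq 2$ would force, by Theorem \ref{main1}(iv)–(v), $\e_2(I)=\e_1(I)-\e_0(I)+\ell_A(A/I)$, contradicting the second hypothesis in $(2)$; hence $c=1$, which by the equivalence $(1)\Leftrightarrow(3)$ already proven gives $C\cong B(-2)$.

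I expect no serious obstacle here, since every ingredient is in place; the one point requiring a little care is handling $d=1$ separately in the $(2)\Rightarrow(1)$ step, because Theorem \ref{main1}(iii)–(v) and Lemma \ref{e2} presuppose $d\geq 2$, so for $d=1$ I instead argue directly that $\a\subseteq B=k[X_1]$ nonzero and generated by linear forms with $\height_B\a\le 1$ forces $\a=(X_1)$, i.e. $c=1$.
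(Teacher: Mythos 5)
Most of your proposal follows the paper's route: apart from $(3)\Rightarrow(1)$, everything is read off from Theorem \ref{main1} with $c=1$, and your treatment of $(1)\Leftrightarrow(2)$, of the case $d=1$, and of assertions (i)--(iv) is correct and is exactly what the paper intends. The problem is the implication $(3)\Rightarrow(1)$, which as written is circular. You need $\rank_BC=1$ to conclude that the surjection $B(-2)\twoheadrightarrow C$ is an isomorphism, and you propose to get it from Theorem \ref{rank1} together with Proposition \ref{multi} after ``running the numerics''; but the numerics are obtained ``assuming $(3)$, Proposition \ref{function} together with the Hilbert series of $C\cong B(-2)$'' --- that is, you use the isomorphism $C\cong B(-2)$, which is precisely what $(3)\Rightarrow(1)$ is supposed to establish. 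Under hypothesis $(3)$ alone the Hilbert function of $C$ is not yet known (that is the content of the statement), so at that stage there is no independent derivation of $\e_1(I)=\e_0(I)-\ell_A(A/I)+\ell_A(I^2/QI)+1$. A second, smaller omission: to speak of a $B$-module surjection $B(-2)\to C$, or of $B$-torsion-freeness of $C$, you must first know $\m C=(0)$; this is true (by Lemma \ref{fact1}(4), or simply because $C=TC_2$ and $\ell_A(C_2)=\ell_A(I^3/QI^2)=1$ gives $\m C=T\,\m C_2=(0)$), but you never check it, and $\Ass_TC\subseteq\{\m T\}$ alone does not imply it.

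The gap is easy to close, and the paper's own two-line argument shows how: under $(3)$ one has $\m C=(0)$ and $C=TC_2$ with $\ell_A(C_2)=1$, so $C$ is a cyclic graded $B$-module generated in degree $2$, i.e.\ $C\cong (B/\b)(-2)$ for some graded ideal $\b$ of $B$; since $C\neq(0)$, Proposition \ref{Ass} gives $\Ass_TC=\{\m T\}$, hence $\dim_TC=d$ (equivalently, $C$ is torsion-free over $B$), and this forces $\b=(0)$, because a nonzero cyclic module with $\b\neq(0)$ is annihilated by $\b$, hence is torsion and has dimension $<d$. In other words, your own torsion-freeness observation already suffices once cyclicity is used --- no rank computation is needed --- so the detour through $\rank_BC=1$, Proposition \ref{multi} and Theorem \ref{rank1} is both unnecessary and, in the order you run it, circular. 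With that replacement (and the explicit remark that $\m C=(0)$), the remainder of your proof stands and coincides with the paper's.
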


\begin{proof}
For $(1) \Leftrightarrow (2)$, $(1) \Rightarrow (3)$ and the last assertions see Theorem \ref{main1} with $c=1$.

$(3) \Rightarrow (1)$:
Since $\m I^{n+1}=Q^{n-1}I^2$ for all $n \geq 2$, we have $\m C=(0)$ by Lemma \ref{fact1} (4).
Then we have an epimorphism $B(-2) \to C \to 0$ of graded $T$-modules, which must be an isomorphism because $\dim_TC=d$ $($Proposition \ref{Ass}$)$.
\end{proof}

Let $\widetilde{J}=\bigcup_{n \geq 1}[J^{n+1}:_AJ^n]=\bigcup_{n \geq 1} J^{n+1} :_A (a_1^n,a_2^n,\cdots,a_d^n)$ denote the Ratliff-Rush closure of an $\m$-primary ideal $J$ in $A$, which is the largest $\m$-primary ideal in $A$ such that $J \subseteq \widetilde{J}$ and $\e_i(\widetilde{J})=\e_i(J)$ for all $0 \leq i \leq d$ $($c.f. \cite{RR}$)$.

Let us note the following remark.

\begin{rem}\label{tilde}
Assume that $I$ is integrally closed.
Then, by \cite{H, I1}, $Q^n \cap \overline{I^{n+1}}=Q^n \cap \overline{Q^{n+1}}=Q^n\overline{Q}=Q^nI$ holds true for $n \geq 1$, where $\overline{J}$ denotes the integral closure of an ideal $J$.
Thus, we have $Q^n \cap \widetilde{I^{n+1}}=Q^nI$ for all $n \geq 1$.
\end{rem}

The following result correspond to the case where $c=d$ in Theorem \ref{main1}.
In section 5 we give an example of the maximal ideal which satisfy assertion $(1)$ in Theorem \ref{B_+}.

\begin{thm}\label{B_+}
Suppose that $d \geq 2$ and assume that $I$ is integrally closed.
Then the following  conditions are equivalent: 
\begin{itemize}
\item[(1)] $C \cong B_+(-1)$ as graded $T$-modules.
\item[(2)] $\e_1(I)=\e_0(I)-\ell_A(A/I)+\ell_A(I^2/QI)+1$, $\e_2(I)=\e_1(I)-\e_0(I)+\ell_A(A/I)$, and $\e_i(I)=0$ for all $3 \leq i \leq d$.
\item[(3)] $\ell_A(\widetilde{I^2}/I^2)=1$ and $\widetilde{I^{n+1}}=Q\widetilde{I^n}$ for all $n \geq 2$.
\end{itemize}
When this is the case, the associated graded ring $G$ of $I$ is a Buchsbaum ring with $\depth ~G=0$ and the Buchsbaum invariant $\Bbb{I}(G)=d$.
\end{thm}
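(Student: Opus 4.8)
The plan is to reduce everything to the previously established Theorem \ref{main1} (equivalently Proposition \ref{function+}) applied with $c = d$. Indeed, by Theorem \ref{main1}, condition (2) of Theorem \ref{B_+} is precisely the specialization of the Hilbert-coefficient formulas in parts (v) and (i) of Theorem \ref{main1} to the case $c = d$: the first equality $\e_1(I)=\e_0(I)-\ell_A(A/I)+\ell_A(I^2/QI)+1$ is condition (1) there, and then $c = \ell_A(I^3/QI^2)$ together with the stated values $\e_2(I)=\e_1(I)-\e_0(I)+\ell_A(A/I)$ and $\e_i(I)=0$ for $3\le i\le d$ is, by the case analysis (iv)--(v), equivalent to forcing $c = d$ rather than $c<d$. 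So the equivalence $(1)\Leftrightarrow(2)$ follows once we observe that, when $\e_1(I)$ takes the almost-extremal value, $C\cong \a(-1)$ with $\a = (X_1,\dots,X_c)B$ and $c=d$ exactly means $\a = B_+$, i.e. $C\cong B_+(-1)$. I would spell this out: $(1)\Rightarrow$ the $\e_1$-equality and $c=d$ by Proposition \ref{function+}(1), hence $(2)$ by \ref{function+}(6); conversely $(2)\Rightarrow$ the $\e_1$-equality gives $C\cong\a(-1)$ by Theorem \ref{rank1}, and Theorem \ref{m4} gives $I^4=QI^3$, so $\a=(X_1,\dots,X_c)B$ with $c = \ell_A(I^3/QI^2)$; the vanishing $\e_i(I)=0$ for $3\le i\le d$ then rules out $c\le d-1$ by \ref{function+}(4)--(5) (those cases produce a nonzero $\e_{c+1}$ or $\e_{c+2}$), forcing $c=d$, i.e. $C\cong B_+(-1)$.

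For the equivalence with (3), the idea is to translate the structure $C\cong B_+(-1)$ into statements about the Ratliff--Rush filtration. Using Proposition \ref{function} with $C\cong B_+(-1)$ (so $\ell_A(C_n)=\ell_A(B_{n-1})-1=\binom{n+d-2}{d-1}-1$ for $n\ge 2$, and $C_1=0$), I would compute $\ell_A(A/I^{n+1})$ explicitly and compare with $\ell_A(A/\widetilde{I^{n+1}})$. Since $\e_i(\widetilde{I^{n}})=\e_i(I)$ and the associated graded ring of the Ratliff--Rush filtration has positive depth, the Hilbert function of $\widetilde{I^\bullet}$ agrees with the Hilbert polynomial from degree $1$ on; one reads off $\ell_A(\widetilde{I^2}/I^2)$ as the single correction term, and the equality $\widetilde{I^{n+1}}=Q\widetilde{I^n}$ for $n\ge 2$ is exactly the assertion that this correction does not propagate, i.e. that $C$ is generated in degree $2$ with a one-dimensional "top". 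Concretely: $I^4=QI^3$ (from Theorem \ref{m4}) combined with Remark \ref{tilde} ($Q^n\cap\widetilde{I^{n+1}}=Q^nI$) lets one identify $\widetilde{I^{n+1}}/Q^{n-1}\widetilde{I^2}$ with $C_n$-type data, and $C\cong B_+(-1)$ says each such piece has colength $1$ in the corresponding free piece — this is the content of $\ell_A(\widetilde{I^2}/I^2)=1$ and $\widetilde{I^{n+1}}=Q\widetilde{I^n}$. I would verify $(1)\Rightarrow(3)$ by this length bookkeeping and $(3)\Rightarrow(1)$ by noting that (3) forces $\m C = 0$ (via Lemma \ref{fact1}(4), since $\widetilde{I^{n+1}}=Q\widetilde{I^n}$ controls $\m I^{n+1}\subseteq Q^{n-1}I^2$) and $\rank_B C=1$ with $C$ generated by $C_2$ of colength one, hence $C\cong B_+(-1)$ by Theorem \ref{rank1} and a surjection count.

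For the final assertion (Buchsbaumness), the plan is to use the exact sequence $(\dagger)$ together with $L\cong D(-1)$ (Lemma \ref{L1}) and $C\cong B_+(-1)$. Applying $\H_M^\bullet(-)$: $L$ is a $d$-dimensional Cohen--Macaulay $T$-module, so $\H_M^i(L)=0$ for $i<d$; and $\H_M^i(C)=\H_M^i(B_+)(-1)$, where $B_+$ sits in $0\to B_+\to B\to A/\m\to 0$, giving $\H_M^0(B_+)=0$, $\H_M^1(B_+)\cong A/\m$ in one degree, and $\H_M^i(B_+)\cong \H_M^i(B)$ for $i\ge 2$ (which is finite length only for $i=0$... rather, $B$ being a polynomial ring, $\H_M^i(B)=0$ for $i<d$ and $\H_M^d(B)\ne 0$ is not finite length — so the finite-length cohomology of $C$ lives only in degrees $0$ and $1$, namely $\H_M^1(C)\cong(A/\m)$). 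Then the long exact sequence from $(\dagger)$ shows $\H_M^i(S)$ has finite length (indeed is concentrated appropriately) for $i\le d-1$, and via \cite[Proposition 2.2]{GNO} (relating $\H_M^i(S)$ and $\H_{M'}^i(G)$) one gets that $G$ has finite-length local cohomology in all degrees $i<d$, i.e. $G$ is a generalized Cohen--Macaulay ring with $\depth G = 0$ (the latter from $c=d$ via \ref{function+}(2)), and the Buchsbaum property plus $\Bbb I(G)=d$ follows because each $\H_M^i$ is annihilated by $M$ (it is a $k$-vector space, being a subquotient of the $B$-module cohomology above) and summing the lengths gives $d$. The main obstacle I anticipate is this last step: showing the local cohomology modules of $G$ are not merely finite length but actually $k$-vector spaces (killed by $\m G + G_+$), which is what upgrades "generalized Cohen--Macaulay" to "Buchsbaum", and pinning down the invariant $\Bbb I(G)=\sum_{i=0}^{d-1}\binom{d-1}{i}\ell(\H^i_{M'}(G))=d$; I would handle it by carefully tracking the module structure through $(\dagger)$ and the sequence $0\to B_+\to B\to k\to 0$, where every relevant cohomology module is visibly annihilated by the irrelevant maximal ideal.
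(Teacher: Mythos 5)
Your reduction of $(1)\Leftrightarrow(2)$ to Theorem \ref{main1}/Proposition \ref{function+} with $c=d$ is exactly what the paper does and is fine. The gaps are in the two places where the real content of the theorem lies. For $(1)\Rightarrow(3)$ you rely on the claim that the associated graded ring of the Ratliff--Rush filtration has positive depth and that therefore its Hilbert function agrees with the Hilbert polynomial from degree $1$ on, so that $\ell_A(\widetilde{I^2}/I^2)$ can be ``read off''. Neither step is justified: positive depth of $\rmG(\{\widetilde{I^n}\})$ is not available in general for $d\ge 2$, and even granting it, Hilbert function $=$ Hilbert polynomial from degree $1$ does not follow; in any case the length $\ell_A(\widetilde{I^2}/I^2)$ is not determined by the $I$-adic Hilbert data alone. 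The paper obtains $\ell_A(\widetilde{I^2}/I^2)=1$ structurally: applying $\H_M^i(-)$ to $0\to I^2R(-1)\to IR(-1)\to G_+\to 0$ and $0\to I^2T(-1)\to I^2R(-1)\to C\to 0$, and using $\depth_T I^2T\ge 2$ (Lemma \ref{cap}) and $\depth_T IR>0$, it gets $\H_M^0(G_+)\hookrightarrow \H_M^1(I^2R)(-1)\hookrightarrow \H_M^1(C)\cong (B/B_+)(-1)$; since $\depth G=0$ (from $c=d$) and $[\H_M^0(G)]_0=(0)$ by integral closedness, this forces $\H_M^0(G)=[\H_M^0(G)]_1\cong \widetilde{I^2}/I^2$ of length one, and only then does \cite[Theorem 2.2]{GR} applied to the filtration $\calF=\{\widetilde{I^n}\}$ give $\widetilde{I^{n+1}}=Q\widetilde{I^n}$ for $n\ge 2$. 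Symmetrically, your $(3)\Rightarrow(1)$ never explains how the rank-one hypothesis (equivalently the $\e_1$-equality) needed for Theorem \ref{rank1} is produced from $(3)$; the paper instead proves $(3)\Rightarrow(2)$ by applying \cite[Theorem 2.2]{GR} and \cite[Proposition 4.6]{HM} to $\calF$ (whose associated graded ring is Cohen--Macaulay under $(3)$) and correcting by $\ell_A(\widetilde{I^2}/I^2)=1$.

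The final assertion also has a genuine gap as you present it: annihilation of all finite-length local cohomology modules by the maximal ideal yields only quasi-Buchsbaumness, which does not imply Buchsbaumness in general, and for $d\ge 2$ the ring $G$ here has two non-vanishing intermediate cohomology modules ($\H^0$ and $\H^1$), so no ``single nonvanishing module'' shortcut applies directly to $G$; your plan to track $(\dagger)$ and $0\to B_+\to B\to A/\m\to 0$ computes $\H_M^i(S)$ but does not bridge this. The paper avoids the issue: from the equivalent conditions it deduces $\widetilde{I^n}=I^n$ for $n\ge 3$, so $W=\rmR'(\calF)/R'$ has length one; since $\rmG(\calF)$ is Cohen--Macaulay so is $\rmR'(\calF)$, and the sequence $0\to R'\to \rmR'(\calF)\to W\to 0$ shows $R'$ has exactly one intermediate local cohomology module, $\H_{M'}^1(R')\cong W$, killed by the maximal ideal; for that configuration Buchsbaumness does follow, with $\Bbb{I}(R')=\binom{d}{1}\cdot 1=d$, and then $G=R'/t^{-1}R'$ inherits the Buchsbaum property and the invariant. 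If you want to keep your route through $S$, you would still need this kind of argument (or a surjectivity criterion) to upgrade quasi-Buchsbaum to Buchsbaum and to pin down $\Bbb{I}(G)=d$.
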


\begin{proof}
We set $c=\ell_A(I^3/QI^2)$ and $\calF=\{\widetilde{I^n}\}_{n \geq 0}$.
Let $\rmR'(\calF)=\sum_{n \in \Z}\widetilde{I^n}t \subseteq A[t,t^{-1}]$ and $\rmG(\calF)=\rmR'(\calF)/t^{-1}\rmR'(\calF)$.
Let $\e_i(\calF)$ denote the $i$th Hilbert coefficients of the filtration $\calF$ for $0 \leq i \leq d$.

$(1) \Rightarrow (2)$
  follows from Theorem \ref{main1}, because $c=\ell_A(C_2)=d$.

$(2) \Rightarrow (1)$
Because $\e_2(I)=\e_1(I)-\e_0(I)+\ell_A(A/I)$ and $\e_i(I)=0$ for all $3 \leq i \leq d$, we have $c=d$ by Theorem \ref{main1}.
Therefore $C \cong B_+(-1)$ as graded $T$-modules.

$(1) \Rightarrow (3)$
Since $c=d$, we have $\depth G=0$ by Theorem \ref{main1} $(ii)$.
We apply local cohomology functors $\H_{M}^i(*)$ of $T$ with respect to the graded maximal ideal $M=\m T+T_+$ of $T$ to the exact sequences
$$ 0 \to I^2 R(-1) \to I R(-1) \to G_+ \to 0 \ \ \mbox{and} \ \ 0 \to I^2 T(-1) \to I^2R(-1) \to C \to 0 $$
of graded $T$-modules and get derived monomorphisms
$$  \H_{M}^0(G_+) \hookrightarrow \H_{M}^1(I^2 R)(-1) \ \ \mbox{and} \ \ \H_M^1(I^2 R)(-1)  \hookrightarrow \H_M^1(C) $$
because $\depth_T IR >0$ and $\depth_T I^2 T \geq 2$ (recall that $T$ is a Cohen-Macaulay ring with $\dim T=d+1$ and $\depth_T T/I^2T \geq 1$ by Lemma \ref{cap}).
We furthermore have $\H_M^1(C) \cong (B/B_+)(-1)$ since $C \cong B_+(-1)$.
Since $I$ is integrally closed, we have $[\H_M^0(G)]_0=(0)$ so that $\H_M^0(G) \cong \H_M^0(G_+) \neq (0)$.
Then because $\ell_A(B/B_+)=1$, we have isomorphisms
$$ \H_M^0(G) \cong \H_M^1(I^2 R)(-1) \cong \H_M^1(C) \cong B/B_+(-1) $$
of graded $B$-modules and hence $\H_M^0(G)=[\H_M^0(G)]_1 \cong B/B_+$.
Then since $[\H_M^0(G)]_1 \cong \widetilde{I^2}/I^2$ we have $\ell_A(\widetilde{I^2}/I^2)=1$.
Hence we have
\begin{eqnarray*}
\e_1(\mathcal{F})=\e_1(I) &=& \e_0(I)-\ell_A(A/I)+\ell_A(I^2/QI)+1\\
&=& \e_0(\mathcal{F})-\ell_A(A/I)+\ell_A(\widetilde{I^2}/QI)+1-\ell_A(\widetilde{I^2}/I^2)\\
&=& \e_0(\mathcal{F})-\ell_A(A/I)+\ell_A(\widetilde{I^2}/Q \cap \widetilde{I^2})
\end{eqnarray*}
because $\widetilde{I}=I$, $Q \cap \widetilde{I^2}=QI$ by Remark \ref{tilde}, and $\e_i(\mathcal{F})=\e_i(I)$ for $i=0,1$.
Therefore, $\widetilde{I^{n+1}}=Q\widetilde{I^n}$ for all $n \geq 2$ by \cite[Theorem 2.2]{GR}.

$(3) \Rightarrow (2)$
Because $Q \cap \widetilde{I^2}=QI$ by Remark \ref{tilde} and $\widetilde{I^{n+1}}=Q\widetilde{I^n}$ for all $n \geq 2$, we have $\e_1(\mathcal{F})= \e_0(\mathcal{F})-\ell_A(A/I)+\ell_A(\widetilde{I^2}/Q I)$ and $\rmG(\calF)$ is a Cohen-Macaulay ring by \cite[Theorem 2.2]{GR}.
Then, since $\ell_A(\widetilde{I^2}/I^2)=1$, we have $\e_1(I)=\e_0(I)-\ell_A(A/I)+\ell_A(I^2/QI)+1$.
We furthermore have $\e_2(I)=\e_2(\calF)=\ell_A(\widetilde{I^2}/QI)=\ell_A(I^2/QI)+1=\e_1(I)-\e_0(I)+\ell_A(A/I)$, and $\e_i(I)=\e_i(\mathcal{F})=0$ for $3 \leq i \leq d$, because $\rmG(\calF)$ is a Cohen-Macaulay ring $($c.f. \cite[Proposition 4.6]{HM}$)$.

Assume one of the equivalent conditions. 
We have $\H_{M}^0(G)=[\H_{M}^0(G)]_1$ by the proof of the implication $(1) \Rightarrow (3)$.
Let $n \geq 3$ be an integer.
We then have $$ \widetilde{I^n}/I^n =\widetilde{I^n} \cap I^{n-1}/I^n \cong [\H_{M}^0(G)]_{n-1}=(0) $$
because $\widetilde{I^n}=Q^{n-2}\widetilde{I^{2}} \subseteq I^{n-1}$.
Therefore, we have $\widetilde{I^n}=I^n$ for all $n \geq 3$.

We set $W=\rmR'(\mathcal{F})/R'$ and look at the exact sequence
$$ 0 \to R' \to \rmR'(\mathcal{F}) \to W \to 0 \ \ \ (*') $$
of graded $R'$-modules.
Since $\widetilde{I^n}=I^n$ for all $n \neq 2$, we have $W=W_2=\widetilde{I^2}/I^2$ so that $\ell_A(W)=1$.
Then, because $\rmG(\mathcal{F})=\rmR'(\calF)/t^{-1}\rmR'(\calF)$ is a Cohen-Macaulay ring, so is $\rmR'(\mathcal{F})$.
Let $M'=(\m, R_+, t^{-1})R'$ be the unique graded maximal ideal in $R'$.
Then applying local cohomology functors $\H_{M'}^i(*)$ to the exact sequence $(*')$ yields $\H_{M'}^i(R')=(0)$ for all $i \neq 1, d+1$ and $\H_{M'}^1(R')=W$.
Since $\m W=(0)$, we have $\m \H_{M'}^1(R')=(0)$.
Thus, $R'$ is a Buchsbaum ring with the Buchsbaum invariant $$ \Bbb{I}(R')=\sum_{i=0}^{d}\binom{d}{i}\ell_A(\H_{M'}^i(R'))=d $$
and hence so is the graded ring $G=R'/t^{-1}R'$.
This completes the proof of Theorem \ref{B_+}.
\end{proof}

In the rest of this section, we explore the relationship between the inequality of Northcott \cite{N} and the structure of the graded module $C$ of an integrally closed ideal.

It is well known that  the inequality $\e_1(I) \geq \e_0(I)-\ell_A(A/I)$ holds true $($\cite{N}$)$ and the equality holds  if and only if $I^2=QI$ $($\cite[Theorem 2.1]{H}$)$.
When this is the case, the associated graded ring $G$ of $I$ is Cohen-Macaulay.

Suppose that $I$ is integrally closed and $\e_1(I)=\e_0(I)-\ell_A(A/I)+1$ then, thanks to \cite[Corollary 14]{I1}, we have $I^3=QI^2$ and the associated graded ring $G$ of $I$ is Cohen-Macaulay.
Thus the integrally closed ideal $I$ with $\e_1(I) \leq \e_0(I)-\ell_A(A/I)+1$ seems satisfactory understood.
In this section, we briefly study the integrally closed ideals $I$ with $\e_1(I)=\e_0(I)-\ell_A(A/I)+2$, and $\e_1(I)=\e_0(I)-\ell_A(A/I)+3$.

Let us begin with the following.

\begin{thm}\label{e_0+1}
Assume that $I$ is integrally closed.
Suppose that $\e_1(I)=\e_0(I)-\ell_A(A/I)+2$ and $I^3 \neq QI^2$.
Then the following assertions hold true.
\begin{itemize}
\item[(1)] $\ell_A(I^2/QI)=\ell_A(I^3/QI^2)=1$, and $I^4=QI^3$.
\item[(2)] $C \cong B(-2)$ as graded $T$-modules.
\item[(3)] $\depth ~G=d-1$.
\item[(4)] $\e_2(I)=3$ if $d \geq 2$, $\e_3(I)=1$ if $d \geq 3$, and $\e_i(I)=0$ for $4 \leq i \leq d$.
\item[(5)] The Hilbert series $HS_I(z) $   is given by
$$ HS_I(z)=\frac{\ell_A(A/I)+\{\e_0(I)-\ell_A(A/I)-1\}z+z^3}{(1-z)^d}. $$
\end{itemize}
\end{thm}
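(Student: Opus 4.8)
The plan is to show that the two hypotheses place us exactly in the case $c=\ell_A(I^3/QI^2)=1$ of Theorem \ref{main1} (equivalently, of Theorem \ref{B(-2)}). Once that is done, $(2)$--$(5)$ follow at once from those theorems, so the whole content is assertion $(1)$, i.e.\ the equalities $\ell_A(I^2/QI)=\ell_A(I^3/QI^2)=1$ (together with $I^4=QI^3$, which comes for free once $\ell_A(I^2/QI)=1$).

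\emph{Step 1: $\ell_A(I^2/QI)=1$.} Since $I$ is integrally closed we have $Q\cap I^2=QI$, so Propositions \ref{Ass} and \ref{multi} apply: with $\p=\m T$ we have $\Ass_T C=\{\p\}$ and $\e_1(I)=\e_0(I)-\ell_A(A/I)+\ell_A(I^2/QI)+\ell_{T_{\p}}(C_{\p})$. From $I^3\neq QI^2$ we get $C\neq(0)$ (Lemma \ref{fact1}(3)), hence $\ell_{T_{\p}}(C_{\p})\geq1$ because $\Ass_T C=\{\p\}$; and $I^3\neq QI^2$ also forces $I^2\neq QI$ (else $I^3=I\cdot I^2=QI^2$), so $\ell_A(I^2/QI)\geq1$. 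Feeding $\e_1(I)=\e_0(I)-\ell_A(A/I)+2$ into the displayed identity leaves no slack: $\ell_A(I^2/QI)=\ell_{T_{\p}}(C_{\p})=1$. In particular $\e_1(I)=\e_0(I)-\ell_A(A/I)+\ell_A(I^2/QI)+1$, so Theorem \ref{m4} gives $I^4=QI^3$; then $C=TC_2$ by Lemma \ref{fact1}(5), and Theorem \ref{rank1} yields $C\cong(X_1,\dots,X_c)B(-1)$ for linearly independent linear forms $X_1,\dots,X_c$ of $B$, with $c=\ell_A(I^3/QI^2)$ and $1\leq c\leq d$.

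\emph{Step 2: $\ell_A(I^3/QI^2)=1$, the crux.} If $d=1$ there is nothing to prove, as $c\leq d=1$. Assume $d\geq2$ and suppose, for a contradiction, that $c\geq2$. Fix an isomorphism $\varphi\colon C\xrightarrow{\ \sim\ }(X_1,\dots,X_c)B(-1)$, write $X_i=\overline{b_it}$ so that $Q=(b_1,\dots,b_d)$, and pick $z_1,z_2\in I^3$ whose classes $f_1,f_2\in C_2\cong I^3/QI^2$ satisfy $\varphi(f_1)=X_1$, $\varphi(f_2)=X_2$. Pulling back $X_2\cdot X_1=X_1\cdot X_2$ by $\varphi$, the Koszul relation $X_2f_1-X_1f_2=0$ holds in $C_3\cong I^4/Q^2I^2$, i.e.
$$ b_2z_1-b_1z_2\in Q^2I^2. $$
I would then chase this relation exactly as in the exclusion argument at the end of the proof of Theorem \ref{m4} — using $(b_1,b_2)\cap Q^2I^2=(b_1,b_2)QI^2$ and $(b_1,b_2)\cap QI^2=(b_1,b_2)I^2$ from Lemma \ref{cap}, the fact that $b_1,b_2$ form a regular sequence on $A$, and the integral-closure identity $Q^2\cap I^3\subseteq Q^2\cap\overline{Q^3}=Q^2\overline{Q}=Q^2I$ of \cite{H,I2} — and I expect to be driven to $z_1\in Q^2I\subseteq QI^2$, i.e.\ $f_1=0$ in $C_2$, a contradiction. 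Hence $c=1$ and $\ell_A(I^3/QI^2)=1$, which is $(1)$. This element-chasing with integral closures of parameter ideals is the only genuinely hard part of the whole statement; $\ell_A(I^2/QI)=1$ is cheap, and $(3)$--$(5)$ below are formal, so I expect Step 2 to be where almost all the work lies, and care is needed (as in the inductive setup of Theorem \ref{m4}) to arrange $Q$, a superficial element, and $z$ compatibly.

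\emph{Step 3: the remaining assertions.} With $c=1$ and $\ell_A(I^2/QI)=1$ in hand, Theorem \ref{B(-2)} (implication $(3)\Rightarrow(1)$) gives $C\cong B(-2)$, which is $(2)$. Assertion $(4)$ — note $\e_1(I)-\e_0(I)+\ell_A(A/I)=2$, so $\e_2(I)=3$ — and the Hilbert series $(5)$ are the specializations of Theorem \ref{main1}$(iii)$ and $(vi)$ at $c=1$, $\ell_A(I^2/QI)=1$; and $(3)$ follows from Theorem \ref{main1}$(i)$ (which gives $\depth G\geq d-1$) together with the observation that the $h$-vector $(\ell_A(A/I),\ \e_0(I)-\ell_A(A/I)-1,\ 0,\ 1)$ read off from $(5)$ has a zero entry followed by a nonzero one, hence cannot be the Hilbert function of an Artinian standard graded algebra; so $G$ is not Cohen-Macaulay and $\depth G=d-1$.
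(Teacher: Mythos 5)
Your Steps 1 and 3 are fine: Step 1 ($\ell_A(I^2/QI)=\ell_{T_{\p}}(C_{\p})=1$ via Propositions \ref{Ass} and \ref{multi}) is essentially the paper's argument (which quotes Corollary \ref{EV}), and your Step 3 is valid, with a mildly different ending for $\depth G=d-1$: you rule out Cohen--Macaulayness by the internal zero in the $h$-vector, whereas the paper notes $I^3\subseteq QI$ and that $G$ Cohen--Macaulay would force $I^3=Q\cap I^3=QI^2$. But Step 2, which you yourself identify as the crux, has a genuine gap. The element chase you sketch never uses the hypothesis $\ell_A(I^2/QI)=1$, and without that hypothesis the conclusion $c=1$ is simply false: Theorem \ref{B_+} and the examples of Section 5 produce integrally closed (maximal) ideals with $\e_1=\e_0-\ell_A(A/I)+\ell_A(I^2/QI)+1$ and $C\cong B_+(-1)$, so $c=d\geq 2$, and Theorem \ref{e_0+2} has cases with $c=2$. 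In all of these the Koszul relations $X_2f_1=X_1f_2$ hold in $C$ with $f_1,f_2\neq 0$, so no chase starting from $b_2z_1-b_1z_2\in Q^2I^2$ can force $z_1\in QI^2$. Concretely, the chase stalls immediately: writing $b_2z_1-b_1z_2=b_1\tau_1+b_2\tau_2$ with $\tau_i\in QI^2$ (Lemma \ref{cap}) and using that $b_1,b_2$ is a regular sequence only gives $z_1\in QI^2+(b_1)$, and there is no analogue of the extra leverage present in the $\ell=2$ exclusion of Theorem \ref{m4}, where the relation $Y_d^2f_i=Y_if_d$ mixes degrees $2$ and $3$ and where $I^3=QI^2+(z_1,\dots,z_{d-1})$ is used to control $I^4$ before the integral-closure identity $Q^2\cap I^3=Q^2I$ can be applied.

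The missing idea is that, once $\ell_A(I^2/QI)=1$, the equality $c\leq 1$ is elementary and needs no Sally-module machinery at all: since $I^2/QI$ has length one, $\m I^2\subseteq QI$ and $I^2=QI+(xy)$ for some $x,y\in I\setminus Q$; then for any $u\in I$ one has $uy=q+\lambda xy$ with $q\in QI$, hence $uxy=qx+\lambda x^2y\in QI^2+(x^2y)$, so $I^3=QI^2+I\!\cdot\!xy=QI^2+(x^2y)$ and therefore $\ell_A(I^3/QI^2)=1$ because $I^3\neq QI^2$ and $\m I^3\subseteq QI^2$. This is exactly how the paper proves assertion (1); after that, Theorem \ref{main1} (or Theorem \ref{B(-2)}) delivers (2), (4), (5) and $\depth G\geq d-1$, as in your Step 3. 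So the overall architecture of your proposal is right, but the one step you flagged as carrying "almost all the work" is the one that, as proposed, would fail; it should be replaced by the short multiplication argument above, which is where the hypothesis $\ell_A(I^2/QI)=1$ actually enters.
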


\begin{proof}
Because $I^3 \neq QI^2$, it follows from Corollary \ref{EV} that
$$ 0 < \ell_A(I^2/QI)< \e_1(I)-\e_0(I)+\ell_A(A/I)=2.$$
Therefore, $\ell_A(I^2/QI)=1$ and hence $\e_1(I)=\e_0(I)-\ell_A(A/I)+\ell_A(I^2/QI)+1$.
Let $I^2=QI+(xy)$ with $x$, $y \in I \backslash Q$.
Then $I^3=QI^2+(x^2y)$, so that $\ell_A(I^3/QI^2)=1$ since $I^3 \neq QI^2$ and $\m I^2 \subseteq QI$.
Thanks to Theorem \ref{main1}, $C \cong B(-2)$ as graded $T$-modules, so that assertions $(1)$, $(2)$, $(4)$, and $(5)$ follow, and $\depth ~G \geq d-1$ by Theorem \ref{B(-2)}.
Since $I^3 \subseteq Q I$, $G$ is not a Cohen-Macaulay ring, for otherwise $I^3=Q \cap I^3 =QI^2$, so that $\depth G=d-1$.
This completes the  proof of Theorem \ref{e_0+1}.
\end{proof}

Notice that the following result  also follows by \cite[Theorem 4.6]{RV3}.

\begin{cor}\label{e_0+1_}
Assume that $I$ is integrally closed and suppose that $\e_1(I)=\e_0(I)-\ell_A(A/I)+2$.
Then $\depth~ G \geq d-1$ and $I^4=QI^3$, and the graded ring $G$ is Cohen-Macaulay if and only if $I^3=Q I^2$.
\end{cor}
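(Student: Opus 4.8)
The plan is to prove the Corollary by splitting into two cases according to whether or not $I^3 = QI^2$, feeding each case into the structural results already established.

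First I would make the preliminary reduction. Since $\e_1(I) = \e_0(I) - \ell_A(A/I) + 2 > \e_0(I) - \ell_A(A/I)$, the case $I^2 = QI$ is excluded, because by \cite[Theorem 2.1]{H} that equality would force $\e_1(I) = \e_0(I) - \ell_A(A/I)$; hence $\ell_A(I^2/QI) \geq 1$. Writing $\p = \m T$ and applying Proposition \ref{multi}, $\ell_A(I^2/QI) + \ell_{T_{\p}}(C_{\p}) = 2$, so exactly one of two things happens: either $\ell_A(I^2/QI) = 2$ and $\ell_{T_{\p}}(C_{\p}) = 0$ (equivalently $C = (0)$, i.e. $I^3 = QI^2$ by Lemma \ref{fact1}(3)), or $\ell_A(I^2/QI) = 1$ and $\ell_{T_{\p}}(C_{\p}) = 1$ (equivalently $I^3 \neq QI^2$, and then $\e_1(I) = \e_0(I) - \ell_A(A/I) + \ell_A(I^2/QI) + 1$). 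These two cases are exhaustive and mutually exclusive.

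Next I would dispose of each case. If $I^3 = QI^2$, then Corollary \ref{EV} gives that $G$ is Cohen-Macaulay, so $\depth G = d \geq d - 1$, and $I^4 = I \cdot I^3 = I \cdot QI^2 = QI^3$. If $I^3 \neq QI^2$, then the hypotheses of Theorem \ref{e_0+1} are met verbatim, so $\depth G = d - 1$ and $I^4 = QI^3$; in particular $G$ is not Cohen-Macaulay, since $\depth G = d - 1 < d = \dim G$. Putting the two cases together, $\depth G \geq d - 1$ and $I^4 = QI^3$ hold unconditionally; and for the equivalence, $I^3 = QI^2$ implies $G$ Cohen-Macaulay by Corollary \ref{EV}, while $I^3 \neq QI^2$ implies $G$ is not Cohen-Macaulay by the second case.

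There is no genuinely hard step here: the argument is essentially bookkeeping once Corollary \ref{EV} and Theorem \ref{e_0+1} are available. The only points that require a moment's care are excluding the degenerate possibility $I^2 = QI$ — handled above through \cite[Theorem 2.1]{H} — and checking that the dichotomy $I^3 = QI^2$ versus $I^3 \neq QI^2$ exactly matches the two admissible values of the pair $(\ell_A(I^2/QI),\ \ell_{T_{\p}}(C_{\p}))$, which is immediate from Proposition \ref{multi}. (Alternatively, the statement can be extracted from \cite[Theorem 4.6]{RV3}, but the route through Corollary \ref{EV} and Theorem \ref{e_0+1} keeps the argument self-contained.)
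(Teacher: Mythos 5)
Your proof is correct and follows essentially the route the paper intends: the corollary is obtained by splitting on whether $I^3=QI^2$, handling that case by Corollary \ref{EV} and the case $I^3\neq QI^2$ by Theorem \ref{e_0+1}. The detour through Proposition \ref{multi} and the exclusion of $I^2=QI$ is harmless but not needed, since the dichotomy $I^3=QI^2$ versus $I^3\neq QI^2$ already feeds directly into those two results.
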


Before closing this section, we briefly study the integrally closed ideal $I$ with $\e_1(I)=\e_0(I)-\ell_A(A/I)+3$.
Suppose that $\e_1(I)=\e_0(I)-\ell_A(A/I)+3$ then we have
$$0< \ell_A(I^2/QI) \leq \e_1(I)-\e_0(I)+\ell_A(A/I) =3$$
by Corollary \ref{EV}.
If $\ell_A(I^2/QI)=1$ then we have $\depth G \geq d-1$ by \cite{RV1, W}.
If $\ell_A(I^2/QI)=3$ then the equality $\e_1(I)=\e_0(I)-\ell_A(A/I)+\ell_A(I^2/QI)$ holds true, so that $I^3=QI^2$ and the associated graded ring $G$ of $I$ is Cohen-Macaulay by Corollary \ref{EV}.
Thus we need to consider the following.

\begin{thm}\label{e_0+2}
Suppose that $d \geq 2$. Assume that $I$ is integrally closed and $\e_1(I)=\e_0(I)-\ell_A(A/I)+3$ and $\ell_A(I^2/QI)=2$.
Let $c=\ell_A(I^3/QI^2)$.
Then the following assertions hold true.
\begin{itemize}
\item[(1)] Either $C \cong B(-2)$ as graded $T$-modules or there exists an exact sequence $$0 \to B(-3) \to B(-2) \oplus B(-2) \to C \to 0$$ of graded $T$-modules.
\item[(2)] $ 1 \leq c \leq 2$ and $I^4=QI^3$.
\item[(3)] Suppose $c=1$ then $\depth G \geq d-1$ and $\e_2(I)=4$, $\e_3(I)=1$ if $d \geq 3$, and $\e_i(I)=0$ for $4 \leq i \leq d$.
\item[(4)] Suppose $c=2$ then $\depth G=d-2$ and $\e_2(I)=3$, $\e_3(I)=-1$ if $d \geq 3$, $\e_4(I)=-1$ if $d \geq 4$, and $\e_i(I)=0$ for $5 \leq i \leq d$.
\item[(5)] The Hilbert series $HS_I(z) $  is given by 
\[ HS_I(z) =  \left\{
\begin{array}{rl}
\vspace{3mm}
\displaystyle\frac{\ell_A(A/I)+\{\e_0(I)-\ell_A(A/I)-2\}z+z^2+z^3}{(1-z)^d}, & \quad \mbox{if $c=1$,} \\
\displaystyle\frac{\ell_A(A/I)+\{\e_0(I)-\ell_A(A/I)-2\}z+3z^3-z^4}{(1-z)^d} & \quad \mbox{if $c=2$}.
\end{array}
\right.\]
\end{itemize}
\end{thm}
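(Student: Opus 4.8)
The plan is to reduce the whole statement, via Theorem \ref{main1}, to the single inequality $\ell_A(I^3/QI^2)\le2$. Since $\ell_A(I^2/QI)=2$, the hypothesis $\e_1(I)=\e_0(I)-\ell_A(A/I)+3$ is exactly $\e_1(I)=\e_0(I)-\ell_A(A/I)+\ell_A(I^2/QI)+1$; hence by Theorem \ref{main1} we have $I^4=QI^3$ and $C\cong(X_1,\dots,X_c)B(-1)$ as graded $T$-modules with $c=\ell_A(I^3/QI^2)$ and $1\le c\le d$, where in fact $c\ge1$ because $c=0$ would give $I^3=QI^2$ and then $\e_1(I)=\e_0(I)-\ell_A(A/I)+2$ by Corollary \ref{EV}. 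So everything will follow once $c\in\{1,2\}$ is known.

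Granting $c\in\{1,2\}$, the remaining assertions are bookkeeping. Assertion (2) is $1\le c\le2$ together with $I^4=QI^3$, already in hand. For (1): if $c=1$ then $C\cong X_1B(-1)\cong B(-2)$; if $c=2$, the Koszul complex of the regular sequence $X_1,X_2$ in the polynomial ring $B$, namely $0\to B(-2)\xrightarrow{(X_2,\,-X_1)}B(-1)^{\oplus 2}\to(X_1,X_2)B\to0$, gives upon twisting by $(-1)$ the exact sequence $0\to B(-3)\to B(-2)\oplus B(-2)\to C\to0$. Assertions (3), (4) and (5) follow by substituting $c=1$, respectively $c=2$, into Proposition \ref{function+}: parts (2)--(3) give $\depth G\ge d-1$ for $c=1$ and $\depth G=d-2$ for $c=2$; parts (4)--(6), together with $\ell_A(I^2/QI)=2$, give the listed values of $\e_i(I)$ (so $\e_2(I)=\e_1(I)-\e_0(I)+\ell_A(A/I)+1=4$ and $\e_3(I)=1$ for $c=1$, and $\e_2(I)=3$, $\e_{c+1}(I)=\e_{c+2}(I)=(-1)^{c+1}$ for $c=2$); and part (7), after expanding $(1-z)^{c+1}z$ in the numerator for $c=1,2$, yields the two Hilbert series in (5).

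The crux is therefore the bound $\ell_A(I^3/QI^2)\le2$, and I would prove it by induction on $d$, imitating the proof of Theorem \ref{m4}. For $d=1$ one has $\e_1(I)=\sum_{i\ge0}\ell_A(I^{i+1}/QI^i)$, and since the $i=0$ term is $\ell_A(I/Q)=\e_0(I)-\ell_A(A/I)$ the hypothesis forces $\sum_{i\ge2}\ell_A(I^{i+1}/QI^i)=1$, hence $\ell_A(I^3/QI^2)\le1$; and for $d=2$ the bound $\ell_A(I^3/QI^2)\le d=2$ holds already by Theorem \ref{main1}. For $d\ge3$, take a superficial element $a_1$ for $I$ with $I/(a_1)$ integrally closed and pass to $A'=A/(a_1)$, $I'=I/(a_1)$, $Q'=Q/(a_1)$; the hypotheses descend, so by induction $\ell_{A'}({I'}^3/Q'{I'}^2)\le2$. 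If $\depth\rmG(I')>0$, then $a_1t$ is a nonzero divisor on $G$ and $\rmC_{Q'}(I')\cong C/a_1tC$, whence $\ell_A(I^3/QI^2)=\ell_{A'}({I'}^3/Q'{I'}^2)\le2$. If $\depth\rmG(I')=0$, then Proposition \ref{function+}(2) forces $\ell_{A'}({I'}^3/Q'{I'}^2)=d-1$, so $d=3$; in this last case one must rule out $c=3$, i.e.\ $C\cong B_+(-1)$, and this I expect to be the main obstacle. I would attack it directly, exactly as in the case $\ell=2$ in the proof of Theorem \ref{m4}: starting from the isomorphism $C\cong\a(-1)$, write $I^2=QI+(u,v)$ so that $I^3=QI^2+Iu+Iv$ and $u^2,uv,v^2\in I^4=QI^3\subseteq QI^2$, and then combine $\m I^3\subseteq QI^2$ (Lemma \ref{fact1}(4), as $\m C=(0)$), the intersection formulas of Lemmas \ref{cap} and \ref{cap2}, and the integral-closure identities $Q^n\cap I^{n+1}=Q^n\cap\overline{I^{n+1}}=Q^nI$ to eliminate products and contradict $\dim_k(I^3/QI^2)=3$.
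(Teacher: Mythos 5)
Your reduction of the theorem to the single inequality $c=\ell_A(I^3/QI^2)\le 2$ is exactly how the paper proceeds, and your bookkeeping from that point on (the Koszul resolution $0\to B(-3)\to B(-2)^{\oplus 2}\to (X_1,X_2)B(-1)\to 0$ for assertion (1), and the specializations of Proposition \ref{function+} (2)--(7) at $c=1,2$ for (2)--(5)) is correct and matches the paper's "all assertions follow by Theorem \ref{main1}". The divergence is in how the bound $c\le 2$ is obtained, and there your argument has a genuine gap. The paper gets $1\le \ell_A(I^3/QI^2)\le 2$ by invoking the proofs of Propositions 2.1 and 2.2 of [RV2]; you instead propose an induction on $d$ modelled on Theorem \ref{m4}. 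That induction does work up to a point: the base case $d=2$ follows from $c\le d$, the superficial-element reduction is legitimate, the case $\depth \rmG(I')>0$ preserves $\ell_A(I^3/QI^2)$, and the case $\depth\rmG(I')=0$ forces $d=3$, $\ell_{A'}({I'}^3/Q'{I'}^2)=2$, so that only $c=3$, i.e. $C\cong B_+(-1)$, remains to be excluded. But at precisely this point you stop proving and start hoping: you write that you "expect" this to be the main obstacle and only list the tools (Lemma \ref{fact1}(4), Lemmas \ref{cap} and \ref{cap2}, the identity $Q^n\cap I^{n+1}=Q^nI$) you would try to combine. No contradiction is actually derived, so the crucial inequality is not established.

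Moreover, the analogy with the case $\ell=2$ in the proof of Theorem \ref{m4} is not a reliable guide here. There, the ideal to be excluded was $(Y_1,\dots,Y_{d-1},Y_d^2)B$, and the contradiction came from the explicit relations $Y_d^2f_i=Y_if_d$ inside $C$, which produce element-level identities in $Q^3I^2$ that can be unwound with Lemmas \ref{cap} and \ref{cap2}; the hypothesis $\ell_A(I^2/QI)=2$ played no role in that computation. Here the module to be excluded is $B_+(-1)$ itself, which carries no comparable "excess" relation among its generators, and the only extra leverage is the condition $\ell_A(I^2/QI)=2$ — a condition on $L$ (equivalently on $I^2$), not on $C$. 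Indeed, $C\cong B_+(-1)$ with $d=3$ is numerically consistent with $\e_1(I)=\e_0(I)-\ell_A(A/I)+3$ via Theorem \ref{B_+} (it corresponds to $\ell_A(\widetilde{I^2}/I^2)=1$ and $\widetilde{I^{n+1}}=Q\widetilde{I^n}$ for $n\ge 2$), so ruling it out genuinely requires the structural bound $\ell_A(I^3/QI^2)\le\ell_A(I^2/QI)=2$ coming from writing $I^2=QI+(u,v)$ and controlling $Iu+Iv$ modulo $QI^2$ — which is exactly the content of the cited arguments of [RV2, Propositions 2.1 and 2.2]. Either carry out that computation in detail or cite it as the paper does; as it stands, the decisive step of your proof is missing.
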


\begin{proof}
Since $\ell_A(I^2/QI)=2$ and $\e_1(I)=\e_0(I)-\ell_A(A/I)+3$, we have $\e_1(I)=\e_0(I)-\ell_A(A/I)+\ell_A(I^2/QI)+1$.
We also have $1 \leq \ell_A(I^3/QI^2) \leq 2$ (see the proof of \cite[Proposition 2.1 and 2.2]{RV2}).
Then, thanks to Theorem \ref{main1}, $C \cong X_1B(-1)$ or $C \cong (X_1,X_2)B(-1)$ as graded $T$-modules, where $X_1$, $X_2$ denote the linearly independent linear forms of $B$.
Thus all assertions follow by Theorem \ref{main1}.
\end{proof}

We remark that $\ell_A(I^2/QI)$ measures how far is the multiplicity of $I$ from the minimal value, see \cite[Corollary 2.1]{RV3}. If $\ell_A(I^2/QI) \le 1,  $ then  $\depth ~G \geq d-1,  $ but  it is still open the problem whether $\depth G \geq d-2,  $ assuming $\ell_A(I^2/QI)=2$.
Theorem \ref{e_0+2} confirms the conjectured bound.

\begin{cor}\label{e_0+2_}
Assume that $I$ is integrally closed. 
Suppose that $\e_1(I)=\e_0(I)-\ell_A(A/I)+3$. 
Then $\depth~ G \geq d-2$.
\end{cor}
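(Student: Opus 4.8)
The plan is to run a short case analysis on the possible values of $\ell_A(I^2/QI)$. If $d=1$ then the asserted inequality $\depth G \geq d-2=-1$ is vacuous, so I would assume $d \geq 2$ throughout. First I would apply Corollary \ref{EV}: its inequality $\e_1(I) \geq \e_0(I)-\ell_A(A/I)+\ell_A(I^2/QI)$ together with the hypothesis $\e_1(I)=\e_0(I)-\ell_A(A/I)+3$ forces $\ell_A(I^2/QI) \leq 3$, and $\ell_A(I^2/QI)=0$ is impossible since it would give $I^2=QI$ and hence $\e_1(I)=\e_0(I)-\ell_A(A/I)$, contrary to hypothesis. Thus $\ell_A(I^2/QI) \in \{1,2,3\}$.

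Next I would handle the two extreme values. If $\ell_A(I^2/QI)=3$, equality holds in Corollary \ref{EV}, so $I^3=QI^2$ and $G$ is Cohen-Macaulay; in particular $\depth G=d \geq d-2$. If $\ell_A(I^2/QI)=1$, the bound $\depth G \geq d-1 \geq d-2$ is precisely the one recalled in the discussion preceding the statement, coming from \cite{RV1, W}.

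Finally, the value $\ell_A(I^2/QI)=2$ is exactly the hypothesis of Theorem \ref{e_0+2} (with $d \geq 2$ already arranged), and that theorem delivers $\depth G \geq d-2$ at once: one has $\depth G=d-1$ when $c=\ell_A(I^3/QI^2)=1$ and $\depth G=d-2$ when $c=2$. Assembling the three cases proves the corollary. The only real input is Theorem \ref{e_0+2}; the present statement simply merges the $\ell_A(I^2/QI)=2$ case with the already-known cases, so I do not expect any genuine obstacle beyond correctly delimiting the range of $\ell_A(I^2/QI)$ through Corollary \ref{EV}.
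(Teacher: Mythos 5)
Your proposal is correct and follows essentially the same route as the paper: bound $\ell_A(I^2/QI)$ between $1$ and $3$ via Corollary \ref{EV}, dispose of the cases $\ell_A(I^2/QI)=1$ (by \cite{RV1, W}) and $\ell_A(I^2/QI)=3$ (Cohen-Macaulayness from the equality case), and invoke Theorem \ref{e_0+2} (3), (4) for $\ell_A(I^2/QI)=2$. Your explicit handling of $d=1$ and of the impossibility of $\ell_A(I^2/QI)=0$ are small tidying touches the paper leaves implicit.
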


\begin{proof}
We have
$0< \ell_A(I^2/QI) \leq \e_1(I)-\e_0(I)+\ell_A(A/I) =3$
by Corollary \ref{EV}.
If $\ell_A(I^2/QI)=1$ or $\ell_A(I^2/QI)=3$ then we have $\depth G \geq d-1$ as above.
Suppose that $\ell_A(I^2/QI)=2$ then we have $\depth G \geq d-2$ by Theorem \ref{e_0+2} (3), (4).
This completes a proof of Corollary \ref{e_0+2_}.
\end{proof}


\section{An Example}

The goal of this section is to   construct an example of Cohen-Macaulay local ring with the maximal ideal $\m$  satisfying  the equality in Theorem \ref{main1} (1).
The class of examples we exhibit includes an interesting example given by H.-J. Wang, see \cite[Example 3.2]{RV3}.

\begin{thm}\label{ex1}
Let $d \geq c \geq 1$ be integers.
Then there exists a Cohen-Macaulay local ring $(A, \m)$ such that
$$ d=\dim A, \ \ \e_1(\m)=\e_0(\m)+\ell_A(\m^2/Q\m), \ \ \mbox{and} \ \ c=\ell_A(\m^3/Q\m^2) $$
for some minimal reduction $Q=(a_1,a_2,\cdots,a_d)$ of $\m$.
\end{thm}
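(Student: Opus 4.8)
The plan is as follows. Since the maximal ideal $\m$ is automatically integrally closed and $\ell_A(A/\m)=1$, the requested equality $\e_1(\m)=\e_0(\m)+\ell_A(\m^2/Q\m)$ is exactly condition $(1)$ of Theorem~\ref{main1} applied to $I=\m$; hence, once we have produced a Cohen--Macaulay local ring for which this equality holds and $\ell_A(\m^3/Q\m^2)=c$, Theorem~\ref{main1} records all the remaining structure. First I would reduce to the case $c=d$. If $B$ is a Cohen--Macaulay local ring of dimension $c$ with infinite residue field and minimal reduction $Q_B$ of $\m_B$ realizing $\e_1(\m_B)=\e_0(\m_B)+\ell_B(\m_B^2/Q_B\m_B)$ and $\ell_B(\m_B^3/Q_B\m_B^2)=c$, then $A=B[[Y_{c+1},\dots,Y_d]]$ is Cohen--Macaulay of dimension $d$, $Q=Q_BA+(Y_{c+1},\dots,Y_d)$ is a minimal reduction of $\m=\m_A$, and a direct computation gives $HS_\m(z)=(1-z)^{-(d-c)}HS_{\m_B}(z)$, whence $\e_0(\m)=\e_0(\m_B)$, $\e_1(\m)=\e_1(\m_B)$, and $\ell_A(\m^i/Q\m^{i-1})=\ell_B(\m_B^i/Q_B\m_B^{i-1})$ for $i=2,3$. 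Thus it suffices to construct, for each $c\ge 1$, one $c$-dimensional example with $\ell(\m^3/Q\m^2)=c$.

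By Theorem~\ref{B_+} (for $c\ge 2$) and Theorem~\ref{B(-2)} together with Theorem~\ref{main1} (for $c=1$), such a $c$-dimensional example is precisely one whose Ratliff--Rush filtration differs from the $\m$-adic one only by one length in degree $2$: $\ell_A(\widetilde{\m^2}/\m^2)=1$ and $\widetilde{\m^{n+1}}=Q\widetilde{\m^n}$ for all $n\ge 2$; equivalently $G$ is a Buchsbaum ring with $\depth G=0$ and Buchsbaum invariant $c$. I would write these rings down explicitly. For $c=1$ one may take the numerical semigroup ring $A=k[[t^4,t^5,t^{11}]]$ with $Q=(t^4)$: here $\m^2=(t^8,t^9,t^{10})$, $Q\m=(t^8,t^9,t^{15})$, $\m^3=Q\m^2+(t^{15})$ and $\m^4=Q\m^3$, so $\ell(\m^2/Q\m)=\ell(\m^3/Q\m^2)=1$, $\e_0(\m)=\ell_A(A/(t^4))=4$, and (since $\dim A=1$) $\e_1(\m)=\sum_{i\ge 0}\ell(\m^{i+1}/Q\m^i)=3+1+1=5=\e_0(\m)+\ell(\m^2/Q\m)$. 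For general $c$ I would exhibit a corresponding family of $c$-dimensional Cohen--Macaulay local rings of small multiplicity --- built, for instance, as an explicit quotient $k[[X_1,\dots,X_c,Z]]/\mathfrak a_c$ or as an idealization of a one-dimensional prototype --- arranged so that a single extra ``socle direction'' of $G$ is created in degree one while $\widetilde{\m^{n+1}}=Q\widetilde{\m^n}$ for $n\ge 2$; H.-J.\ Wang's ring in \cite[Example~3.2]{RV3} should be the member of this family for a small value of $c$.

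The last step is the routine verification: for the rings just constructed one checks that $Q$ is a minimal reduction of $\m$, that $\widetilde{\m^2}=\m^2+(w)$ for some $w\in\m\setminus\m^2$, and that $\widetilde{\m^{n+1}}=Q\widetilde{\m^n}$ for $n\ge 2$; Theorem~\ref{B_+} then yields the asserted equality together with $\ell(\m^3/Q\m^2)=c$ for the $c$-dimensional member, and the reduction of the first paragraph produces the general pair $(d,c)$. The main obstacle is the construction in the second paragraph: one must arrange that the first two Hilbert coefficients gain exactly one unit over the ``$I^3=QI^2$'' situation while $\depth G$ collapses to $0$ and the Buchsbaum invariant of $G$ equals $c$. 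The delicate point is that the most natural candidates --- idealizations of regular rings, quotients by ``fat point'' ideals, Veronese-type subrings --- all degenerate either to $\m^2=Q\m$ or to $\m^3=Q\m^2$, and so annihilate the Sally module; one has to concentrate the defining data of the ideal (or module) in degrees $2$ and $3$ so as to perturb $\ell_A(A/\m^{n+1})$ by a single unit without forcing $G$ to be Cohen--Macaulay.
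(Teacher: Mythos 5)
Your overall strategy is the right one and its first and last steps match the paper: the reduction to the case $c=d$ by passing to $A_0[[X_1,\dots,X_{d-c}]]$ is exactly the paper's reduction (super regular sequence, Hilbert series multiplied by $(1-z)^{-(d-c)}$, $\m^i/Q\m^{i-1}$ unchanged for $i=2,3$), and your translation of the target into the language of Theorem \ref{B_+} (for $c=d\geq 2$: $\ell_A(\widetilde{\m^2}/\m^2)=1$ and $\widetilde{\m^{n+1}}=Q\widetilde{\m^n}$ for $n\geq 2$, equivalently $\rmC_Q(\m)\cong B_+(-1)$) is correct, as is your one-dimensional example $k[[t^4,t^5,t^{11}]]$ with $Q=(t^4)$, where indeed $\e_0=4$, $\e_1=3+1+1=5$ and $\ell(\m^2/Q\m)=\ell(\m^3/Q\m^2)=1$.

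The genuine gap is that for $c\geq 2$ you never produce the ring: the second paragraph only lists desiderata and says you ``would exhibit'' a quotient $k[[X_1,\dots,X_c,Z]]/\mathfrak a_c$ or an idealization with the required properties, and you yourself flag this as the main obstacle. But this construction is the entire content of the theorem --- everything else (the reduction to $c=d$ and the characterization via Theorems \ref{main1}, \ref{B(-2)}, \ref{B_+}) is soft. The paper devotes all of Section 5 to it: it takes $D=k[[\{X_j\}_{1\le j\le m},Y,\{V_i\}_{1\le i\le d},\{Z_i\}_{1\le i\le d}]]$ and the explicit ideal $\a=[(X_j)+(Y)]\cdot[(X_j)+(Y)+(V_i)]+(V_iV_j \mid i\neq j)+(V_i^3-Z_iY)$, sets $A=D/\a$, $Q=(a_1,\dots,a_d)$, and then must verify nontrivial facts: that $A$ is Cohen--Macaulay (via the length computation of Proposition \ref{ex3} after inverting the $Z_i$, the associativity formula giving $\e_0(Q)=m+2d+2$, and the comparison with $\ell_A(A/Q)$), and that the modified filtration $K_n=\m^n+y\m^{n-2}$ satisfies $Q\cap K_2=QK_1$, $K_{n+1}=QK_n$ for $n\geq 2$ and $\ell_A(K_2/\m^2)=1$ (Lemma \ref{ex4}), which is what feeds Theorem \ref{B_+} and yields $\ell_A(\m^3/Q\m^2)=d$. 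None of this is routine, and no candidate ideal $\mathfrak a_c$ in your proposal is specified, let alone checked to be Cohen--Macaulay with the prescribed Ratliff--Rush behavior; pointing at Wang's example (which is the case $m=0$, $d=2$ of the paper's family) does not supply the general $c$. As it stands the proposal proves the theorem only for $c=1$ (with arbitrary $d$), and reduces the remaining cases to an existence statement it does not establish.
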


To construct necessary examples we may assume that $c = d$. 
In fact, suppose that $0 < c < d$ and assume that we have already chosen a certain Cohen-Macaulay local ring $(A_0, \fkm_0)$ such that $c = \operatorname{dim}A_0, \ \ \e_1(\m_0) = \e_0(\m_0) + \ell_{A_0}(\m_0^2/Q_0 \m_0)$, and $c = \ell_{A_0}(\m_0^3/Q_0\m_0^2)$ with $Q_0 = (a_1, a_2, \cdots, a_c)A_0$ a minimal reduction of $\m_0$.
Let $n = d - c$ and let $A = A_0[[X_1, X_2, \cdots, X_{n}]]$ be the formal power series ring over the ring $A_0$. 
We set $\m = \m_0A + (X_1, X_2, \cdots, X_{n})A$ and $Q = Q_0A + (X_1, X_2, \cdots, X_{n})A$. 
Then $A$ is a Cohen-Macaulay local ring with $\operatorname{dim} A = d$ and the maximal ideal $\fkm = \fkm_0A + (X_1, X_2, \cdots, X_{n})A$. The ideal $Q$ is a reduction of $\m$ and because $X_1, X_2, \cdots, X_n$ forms a super regular sequence in $A$ with respect to $\m$ (recall that ${\rmG} (\m) = {\rmG} (\m_0)[Y_1, Y_2, \cdots, Y_n]$ is the polynomial ring, where $Y_i$'s are the initial forms of $X_i$'s), we have $\e_i(\m) = \e_i(\m_0)$ for $i = 0, 1$, $\m^2/Q\m \cong \m_0^2/Q_0\m_0$, and $\m^3/Q\m^2 \cong \m_0^3/Q_0\m_0^2$.
Thus we have $\e_1(\m) = \e_0(\m) + \ell_A(\m^2/Q\m)$ and $\ell_A(\m^3/Q\m^2 ) = c$. 
This observation allows us to concentrate our attention on the case where $c = d$. 

Let $m \geq 0$ and $d \geq 1$ be integers.
Let $$D=k[[\{X_j\}_{1 \leq j \leq m}, Y, \{V_i\}_{1 \leq i \leq d}, \{Z_i\}_{1 \leq i \leq d}]]$$
be the formal power series ring with $m+2d+1$ indeterminates over an infinite field $k$, and let
$$ \a=[(X_j \ | \ 1 \leq j \leq m)+(Y)]\cdot[(X_j \ | \ 1 \leq j \leq m)+(Y)+(V_i \ | \ 1 \leq i \leq d)] $$
$$+(V_iV_j \ | \ 1 \leq i,j \leq d, \  i \neq j)+(V_i^3-Z_iY \ | \ 1 \leq i \leq d).$$
We set $A=D/\a$ and denote the images of $X_j$, $Y$, $V_i$, and $Z_i$ in $A$ by $x_j$, $y$, $v_i$, and $a_i$, respectively.
Then, since $\sqrt{\a}=(X_j \ | \ 1 \leq j \leq m)+(Y)+(V_i \ | \ 1 \leq i \leq d)$, we have $\dim A=d$.
Let $\m=(x_j \ | \ 1 \leq j \leq m)+(y)+(v_i \ | \ 1 \leq i \leq d)+(a_i \ | \ 1 \leq i \leq d)$ be the maximal ideal in $A$ and we set $Q=(a_i \ | \ 1 \leq i \leq d)$.
Then, $\m^2=Q\m+(v_i^2 \ | \ 1 \leq i \leq d)$, $\m^3=Q\m^2+Qy$, and $\m^4=Q\m^3$.
Therefore $Q$ is a minimal reduction of $\m$, and $a_1,a_2,\cdots,a_d$ is a system of parameters for $A$.

We are now interested in the Hilbert coefficients $\e_i(\m)'$ of the maximal ideal $\m$ as well as the structure of the associated graded ring $\rmG(\m)$ and the module $\rmC_Q(\m)$ of $\m$.

\begin{thm}\label{ex2}
The following assertions hold true.
\begin{itemize}
\item[(1)] $A$ is a Cohen-Macaulay local ring with $\dim A=d$.
\item[(2)] $\rmC_Q(\m) \cong B_+(-1)$ as graded $T$-modules. Therefore, $\ell_A(\m^3/Q\m^2)=d$.
\item[(3)] $\e_0(\m)=m+2d+2$, $\e_1(\m)=m+3d+2$.
\item[(4)] $\e_2(\m)=d+1$ if $d \geq 2$, and $\e_i(\m)=0$ for all $3 \leq i \leq d$.
\item[(5)] $\rmG(\m)$ is a Buchsbaum ring with $\depth \rmG(\m)=0$ and ${\Bbb I}(\rmG(\m))=d$.
\item[(6)] The Hilbert series $HS_{\m}(z) $ of $A$  is given by
$$ HS_{\m}(z)  =\frac{1+\{m+d+1\}z+\sum_{j=3}^{d+2}(-1)^{j-1}\binom{d+1}{j-1}z^{j}}{(1-z)^d}. $$
\end{itemize}
\end{thm}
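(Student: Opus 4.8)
The plan is to reduce everything to one structural fact, namely that $A$ is a free module of rank $m+2d+2$ over the regular subring $R_{0}=k[[a_{1},a_{2},\cdots,a_{d}]]$. Granting this, (1) is immediate: $a_{1},\cdots,a_{d}$ is then an $A$-regular sequence, so $A$ is Cohen-Macaulay of dimension $d$. The remaining assertions will follow by combining an explicit Hilbert-function computation with Theorems \ref{main1} and \ref{B_+}, whose running hypotheses apply because $\m$ is integrally closed, $A/\m=k$ is infinite, and consequently $Q\cap\m^{2}=Q\m$.

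To establish the structural fact, first note that, reading off the relations defining $\a$, every monomial of positive degree in the images $x_{j},y,v_{i}$ of $X_{j},Y,V_{i}$ is either $0$, or one of $x_{j},y,v_{i},v_{i}^{2}$, or $v_{i}^{3}=a_{i}y$ (with $v_{i}^{e}=0$ for $e\geq 4$ since $yv_{i}=0$); hence $A$ is generated over $R_{0}$ by the $m+2d+2$ elements $1,x_{1},\cdots,x_{m},y,v_{1},\cdots,v_{d},v_{1}^{2},\cdots,v_{d}^{2}$, and $A$ is module-finite over $R_{0}$ because $a_{1},\cdots,a_{d}$ is a system of parameters of the complete local ring $A$. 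To see these generators form a \emph{free} $R_{0}$-basis, I would introduce the commutative $R_{0}$-algebra $A'=R_{0}\oplus N$, where $N$ is the free $R_{0}$-module on symbols $x_{j},y,v_{i},w_{i}$, with product $(r,\nu)(s,\mu)=(rs,\,r\mu+s\nu+\nu\mu)$ and with the $R_{0}$-bilinear map $N\times N\to N$ given by: $x_{j}$ and $y$ annihilate $N$; $v_{i}v_{i}=w_{i}$; $v_{i}v_{j}=0$ and $v_{i}w_{j}=0$ for $i\neq j$; $v_{i}w_{i}=a_{i}y$; and $w_{i}w_{j}=0$. After verifying that $A'$ is associative (a finite check: up to symmetry the only triple product of basis vectors that can be nonzero is $v_{i}v_{i}v_{i}=a_{i}y$, which is associative), the assignment $Z_{i}\mapsto(a_{i},0)$, $X_{j}\mapsto(0,x_{j})$, $Y\mapsto(0,y)$, $V_{i}\mapsto(0,v_{i})$ kills every generator of $\a$, hence induces a surjection $\bar\varphi\colon A\twoheadrightarrow A'$. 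Composing the canonical surjection $R_{0}^{\,m+2d+2}\twoheadrightarrow A$ onto the $m+2d+2$ generators above with $\bar\varphi$ yields a surjective $R_{0}$-endomorphism of the finitely generated module $R_{0}^{\,m+2d+2}\cong A'$, hence an isomorphism; therefore $A\cong R_{0}^{\,m+2d+2}$ as an $R_{0}$-module, which proves (1).

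Next I would exploit this decomposition. Directly from the relations, $\m^{2}=Q\m+(v_{1}^{2},\cdots,v_{d}^{2})$, $\m^{3}=Q\m^{2}+Qy$ and $\m^{4}=Q\m^{3}$, and combining these with $A\cong R_{0}^{\,m+2d+2}$ one gets, with $\fkn_{0}=(a_{1},\cdots,a_{d})R_{0}$,
\[ \m^{n+1}=\fkn_{0}^{\,n+1}\oplus\bigoplus_{j=1}^{m}\fkn_{0}^{\,n}x_{j}\oplus\fkn_{0}^{\,n-1}y\oplus\bigoplus_{i=1}^{d}\fkn_{0}^{\,n}v_{i}\oplus\bigoplus_{i=1}^{d}\fkn_{0}^{\,n-1}v_{i}^{2}\qquad(n\geq 2), \]
while $Q^{n-1}\m^{2}=\fkn_{0}^{\,n-1}\m^{2}$ has the same decomposition with $\fkn_{0}^{\,n}y$ replacing $\fkn_{0}^{\,n-1}y$. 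Hence $\rmC_{Q}(\m)_{n}=\m^{n+1}/Q^{n-1}\m^{2}\cong\fkn_{0}^{\,n-1}/\fkn_{0}^{\,n}$ for $n\geq 2$ and $=0$ otherwise; in particular $\ell_{A}(\m^{2}/Q\m)=\ell_{A}(\m^{3}/Q\m^{2})=d$. Since $\m\cdot\m^{n+1}=\m^{n+2}=Q^{n}\m^{2}+Q^{n}y\subseteq Q^{n-1}\m^{2}$ for $n\geq 1$, Lemma \ref{fact1} gives $\m\,\rmC_{Q}(\m)=0$, and $\m^{4}=Q\m^{3}$ gives $\rmC_{Q}(\m)=T\rmC_{Q}(\m)_{2}$; the $T$-linear map $B_{+}(-1)\to\rmC_{Q}(\m)$ sending the degree-$2$ generators $X_{1},\cdots,X_{d}$ of $B_{+}(-1)$ to a $k$-basis of $\rmC_{Q}(\m)_{2}$ is then surjective, and it is an isomorphism by a dimension count in each degree (both sides have dimension $\binom{n+d-2}{d-1}$ in degree $n$), which is (2). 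Finally $\ell_{A}(A/Q)=m+2d+2$, so $\e_{0}(\m)=\e(Q;A)=m+2d+2$ by Cohen-Macaulayness, and
\[ \ell_{A}(A/\m^{n+1})=\binom{n+d}{d}+(m+d)\binom{n+d-1}{d}+(d+1)\binom{n+d-2}{d}\qquad(n\geq 2); \]
rewriting this in terms of $\binom{n+d}{d},\binom{n+d-1}{d-1},\binom{n+d-2}{d-2}$ yields $\e_{1}(\m)=m+3d+2$, $\e_{2}(\m)=d+1$ for $d\geq 2$, and $\e_{i}(\m)=0$ for $3\leq i\leq d$, giving (3) and (4), and collecting the same data as a rational function gives $HS_{\m}(z)$ as in (6). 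Equivalently, since (2) puts us in the case $c=\ell_{A}(\m^{3}/Q\m^{2})=d$ of Theorem \ref{main1}, assertions (4) and (6) are exactly its parts (v) and (vi); and (5) is the last statement of Theorem \ref{B_+} (the case $d=1$ being an easy direct check).

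The one genuinely nonroutine point is the identification $A\cong R_{0}^{\,m+2d+2}$, i.e. that $\a$ is precisely the kernel of $D\to A'$. I expect most of the work there to lie in the elementary but lengthy checks that $A'$ is an associative ring and that all the generators $X_{j}X_{k},X_{j}Y,Y^{2},X_{j}V_{i},YV_{i},V_{i}V_{j}\,(i\neq j),V_{i}^{3}-Z_{i}Y$ of $\a$ vanish in $A'$; once that is done, the surjective-endomorphism argument finishes the structural fact cleanly, and all the remaining assertions are forced by the $R_{0}$-module structure of $A$ together with Theorems \ref{main1} and \ref{B_+}.
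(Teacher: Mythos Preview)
Your argument is correct and takes a genuinely different route from the paper. The paper establishes Cohen--Macaulayness of $A$ by computing the multiplicity via the associative formula: it localizes at $\fkp=(X_j,Y,V_i)$, performs a change of variables to evaluate $\ell_{D_\fkp}(A_\fkp)=m+2d+2$ (Proposition~\ref{ex3}), and compares with $\ell_A(A/Q)$. For the Hilbert coefficients it introduces the auxiliary filtration $\calK=\{K_n\}$ with $K_n=\m^n+y\m^{n-2}$, shows $Q\cap K_2=QK_1$ and $K_{n+1}=QK_n$ for $n\ge 2$ (Lemma~\ref{ex4}), and then quotes \cite{GR} and \cite{HM} to read off $\e_i(\calK)=\e_i(\m)$; only at the very end does it invoke Theorem~\ref{B_+} to obtain assertions (2) and (5). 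By contrast, you prove the single structural fact that $A$ is free of rank $m+2d+2$ over $R_0=k[[Z_1,\dots,Z_d]]$ by exhibiting the model algebra $A'$ and the surjective-endomorphism trick; everything else---the Cohen--Macaulayness, the explicit $R_0$-decomposition of each $\m^{n+1}$ and $Q^{n-1}\m^2$, the Hilbert polynomial, and the identification $\rmC_Q(\m)\cong B_+(-1)$---then drops out by direct inspection. Your approach is more elementary and entirely self-contained (no associative formula, no appeal to \cite{GR}), at the cost of the bookkeeping needed to verify associativity of $A'$; the paper's approach better showcases the filtration techniques developed earlier.

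One small point: when you write ``the $T$-linear map $B_+(-1)\to\rmC_Q(\m)$ sending $X_1,\dots,X_d$ to a $k$-basis of $\rmC_Q(\m)_2$'', well-definedness requires the Koszul relations $X_ic_j=X_jc_i$ to hold for the chosen basis $c_1,\dots,c_d$. This is automatic once you observe that your computation actually identifies $\rmC_Q(\m)$ with $\bigoplus_{n\ge 2}\fkn_0^{\,n-1}y/\fkn_0^{\,n}y$ \emph{together with} the $B$-action given by multiplication by $a_i$, so that $\rmC_Q(\m)\cong\mathrm{gr}_{\fkn_0}(R_0)_+(-1)=B_+(-1)$ canonically; you may want to phrase it that way rather than as a map defined on generators.
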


\vskip 2mm
Notice that Wang's example before quoted corresponds to the particular case $m=0 $ and $d=2.$  
\vskip 2mm
Let us divide the proof of Theorem \ref{ex2} into two steps.
Let us begin with the following.

\begin{prop}\label{ex3}
Let $\p=(X_j \ | \ 1 \leq j \leq m)+(Y)+(V_i \ | \ 1 \leq i \leq d)$ in $D$.
Then $\ell_{D_{\p}}(A_{\p})=m+2d+2$.
\end{prop}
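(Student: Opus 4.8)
The plan is to compute the length $\ell_{D_\fkp}(A_\fkp)=\ell_{D_\fkp}((D/\a)_\fkp)$ by localizing at the prime $\fkp=(X_j\mid 1\le j\le m)+(Y)+(V_i\mid 1\le i\le d)$ and exploiting the fact that the $Z_i$'s become units in $D_\fkp$. Note first that $\height\fkp = m+d$ while $\a$ defines a $d$-dimensional quotient, so $\fkp$ is a minimal prime of $\a$ and $\dim D_\fkp/\a D_\fkp=0$; hence the length in question is finite, and $A_\fkp = (D/\a)_\fkp$ is an Artinian local ring. Since $Z_i\notin\fkp$ for every $i$, each $Z_i$ is invertible in $D_\fkp$, so the relations $V_i^3-Z_iY\in\a$ become $Y = Z_i^{-1}V_i^3$ in $A_\fkp$ for every $i$; in particular $Y$ is redundant as a generator of the maximal ideal and $Y\in(V_i^3)$ for each $i$.

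The key step is to produce an explicit $k$-basis, equivalently a $D_\fkp$-module filtration of $A_\fkp$ with one-dimensional quotients. Working modulo $\a D_\fkp$: the generators $V_iV_j$ ($i\ne j$) kill all mixed monomials in the $V$'s, the relation $Y=Z_i^{-1}V_i^3$ eliminates $Y$ in favor of the $V_i^3$'s which are all forced equal (up to units), and the first big factor $[(X_j)+(Y)]\cdot[(X_j)+(Y)+(V_i)]$ kills $X_jX_{j'}$, $X_jY$, $X_jV_i$, $Y^2$, $YV_i$. So the residue ring $A_\fkp$ is spanned over (the localization of) $k$ by the images of $1$; the $X_j$ for $1\le j\le m$; the $V_i$ and $V_i^2$ for $1\le i\le d$; and one more class coming from $Y$ (equivalently $V_i^3$), which is nonzero because $V_i^4 = Z_iV_iY\in(YV_i)=0$ shows $V_i^3$ generates a socle element that is not already hit. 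Counting: $1 + m + d + d + 1 = m+2d+2$, which is the claimed value.

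To make this rigorous rather than a hand-wave, I would organize it as a short chain of surjections and a matching lower bound. For the upper bound $\ell_{D_\fkp}(A_\fkp)\le m+2d+2$: check directly that the $m+2d+2$ monomials listed above generate $A_\fkp$ as a $k$-vector space by verifying that the product of any two of them lies in their span modulo $\a$ (a finite, routine monomial check using the displayed relations $\m^2=Q\m+(v_i^2)$, $\m^3=Q\m^2+Qy$, $\m^4=Q\m^3$ that were already recorded before the statement). For the lower bound, I would exhibit a surjective $k$-algebra map from $A_\fkp$ onto the monomial algebra $k[t]/(t^4)\times\cdots$ — more precisely, specialize the $Z_i$ to $1$ and the extra variables appropriately to land on the Artinian ring $k[T,X_1,\dots,X_m,V_1,\dots,V_d]/\b$ with $\b$ the image of $\a$, whose length is visibly $m+2d+2$, and argue the map is an isomorphism because source and target have the same finite generating set of that size.

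The main obstacle I anticipate is bookkeeping: confirming that the class of $Y$ (equivalently the common value of the $V_i^3$) is genuinely nonzero in $A_\fkp$ and is not already accounted for among the $V_i,V_i^2$ — this is exactly where the relation $V_i^4\in(YV_i)\subseteq\a$ is needed, forcing $V_i^3$ into the socle, and where one must be careful that no further relation collapses it to zero. Everything else is a finite monomial computation that the relations $\a$ have been rigged to make clean.
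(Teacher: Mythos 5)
Your reduction and the spanning half of the argument are fine and are in the same spirit as the paper's proof (invert the $Z_i$, eliminate $Y$ via $Y=Z_i^{-1}V_i^3$, observe that $1,\,x_j,\,v_i,\,v_i^2,\,y$ generate $A_{\p}$ over $D_\p$). But the actual content of the proposition is the lower bound, i.e.\ the linear independence over the residue field $\kappa(\p)$ of these $m+2d+2$ classes --- above all, that the socle class $y$ (equivalently $v_i^3$) is nonzero in $A_\p$ --- and this you never prove: showing $v_i^4\in\a$ only says that $v_i^3$ lies in the socle \emph{if} it is nonzero. Moreover, the mechanism you propose for the lower bound cannot be repaired as stated. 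Since $D$ is a formal power series ring, $Z_i-1$ has unit constant term, so there is no $k$-algebra map ``specializing $Z_i\mapsto 1$''; worse, $\p\cap k[[Z_1,\dots,Z_d]]=(0)$, so every nonzero power series in the $Z_i$ becomes a unit in $D_\p$, hence $A_\p$ contains a copy of the field $\operatorname{Frac}(k[[Z_1,\dots,Z_d]])$ and therefore admits no homomorphism onto any nonzero finite-dimensional $k$-algebra at all. (Also note that the length in question is length over $D_\p$, i.e.\ dimension over $\kappa(\p)$, not over $k$, and that $\height \p=m+d+1$, not $m+d$; the Artinian property of $A_\p$ is simply $\sqrt{\a}=\p$.)

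For comparison, the paper closes exactly this gap by a change of variables: after inverting the $Z_i$ it sets $X_j'=X_j/Z_1$, $V_i'=V_i/Z_1$, $Y'=Y/Z_1$, uses the $i=1$ relation to eliminate $Y'=Z_1{V_1'}^3$, and identifies $A_\p$ with the localization at $P=(X_j')+(V_i')$ of $W/\b$, where $W$ is a polynomial ring over the Laurent ring $\widetilde{k}=k[Z_i,Z_i^{-1}]$ and $\b$ is explicit. The length is then computed additively as
$\ell\bigl(W_P/[\b+({V_1'}^3)]W_P\bigr)+\ell\bigl([\b+({V_1'}^3)]W_P/\b W_P\bigr)=(m+2d+1)+1$:
the first quotient is by a monomial ideal, so $1,\,X_j',\,V_i',\,{V_i'}^2$ really form a basis (this is the independence you skipped), and the second, length-one, step amounts to checking ${V_1'}^3\notin\b W_P$, which follows from homogeneity of $\b$ by comparing coefficients of ${V_i'}^3$, $i\geq 2$, in the generators $\tfrac{Z_1^2}{Z_i}{V_i'}^3-Z_1{V_1'}^3$. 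To complete your proof you must supply an argument of this kind (or verify directly, degree by degree, that $\a D_\p$ contains no nonzero $\kappa(\p)$-combination of your $m+2d+2$ monomials); as written, the lower bound is missing and the proposed route to it fails.
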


\begin{proof}
Let $\widetilde{k}=k[\{Z_i\}_{1 \leq i \leq d}, \{\frac{1}{Z_i}\}_{1 \leq i \leq d}]$ and $\widetilde{D}=D[\{\frac{1}{Z_i}\}_{1 \leq i \leq d}]$.
We set $X'_j=\frac{X_j}{Z_1}$ for $1 \leq j \leq m$, $V'_i=\frac{V_i}{Z_1}$ for $1 \leq i \leq d$, and $Y'=\frac{Y}{Z_1}$.
Then we have 
$$\widetilde{D}=\widetilde{k}[\{X'_j \ | \ 1 \leq j \leq m \}, Y', \{V'_i \ | \ 1 \leq i \leq d\}],$$
$$ \a \widetilde{D}=[(X'_j \ | \ 1 \leq j \leq m)+(Y')]\cdot[(X'_j \ | \ 1 \leq j \leq m)+(Y')+(V'_i \ | \ 1 \leq i \leq d)]$$
$$+(V'_iV'_j \ | \ 1 \leq i,j \leq d, \ i \neq j)+(\frac{Z^2_1}{Z_i}{V'_i}^3-Y' \ | \ 1 \leq i \leq d),$$
and $\{X'_j\}_{1 \leq j \leq m}$, $Y'$, and $\{V'_i\}_{1 \leq i \leq d}$ are algebraically independent over $\widetilde{k}$.
Let $$W=\widetilde{k}[\{X'_j \ | \ 1 \leq j \leq m\}, \{V'_i \ | \ 1 \leq i \leq d\}]$$ in $\widetilde{D}$ and 
$$ \b=[(X'_j \ | \ 1 \leq j \leq m)+({V'_1}^3)]\cdot[(X'_j \ | \ 1 \leq j \leq m)+(V'_i \ | \ 1 \leq i \leq d)] $$
$$ +(V'_i V'_j \ | \ 1 \leq i, j \leq d, \ i \neq j)+ (\frac{Z^2_1}{Z_i}{V'_i}^3-Z_1{V'_1}^3 \ | \ 2 \leq i \leq d)$$
in $W$.
Then substituting $Y'$ with $Z_1{V'_1}^3$ in $\widetilde{D}$, we get the isomorphism $$\widetilde{D}/\a \widetilde{U} \cong W/\b$$
of $\widetilde{k}$ algebras.
Then the prime ideal $\p \widetilde{D}/\a \widetilde{D}$ corresponds to the prime ideal $P/\b$ of $W/\b$, where $P=W_+=(X'_j \ | \ 1 \leq j \leq m)+(V'_i \ | \ 1 \leq i \leq d)$.
Then because
$$ \b+({V'_1}^3)=(X'_j \ | \ 1 \leq j \leq m)\cdot[(X'_j \ | \ 1 \leq j \leq m)+(V'_i \ | \ 1 \leq i \leq d)] $$
$$ + (V'_iV'_j \ | \ 1 \leq i,j \leq d, \ i \neq j)+({V'_i}^3 \ | \ 1 \leq i \leq d) $$
and $\ell_{W_P}([\b+({V'_1}^3)]W_P/\b W_P)=1$, we get
\begin{eqnarray*}
\ell_{W_P/\b W_P}(W_P/\b W_P)&=&\ell_{W_P}(W_P/[\b+({V'_1}^3)]W_P)+\ell_{W_P}([\b+({V'_1}^3)]W_P/\b W_P)\\
&=& (m+2d+1)+1=m+2d+2.
\end{eqnarray*}
Thus $\ell_{A_{\p}}(A_{\p})=\ell_{W_P/\b W_P}(W_P/\b W_P)=m+2d+2$.
\end{proof}

Thanks to the associative formula of multiplicity, we have
$$ \e_0(Q)=\ell_{A_{\p}}(A_{\p})\cdot\e_0^{A_{\p}/\p A_{\p}}([Q+\p A]/\p A)=m+2d+2, $$
because $\p=\sqrt{\a}$ and $A/\p A=D/\p \cong k[[Z_i \ | \ 1 \leq i \leq d]]$.
On the other hand, we have
$$A/Q \cong k[[\{X_j\}_{1 \leq j \leq m}, Y, \{V_i\}_{1 \leq i \leq d}]]/\mathfrak{c}$$
where 
$$\mathfrak{c}=([(X_j \ | \ 1 \leq j \leq m)+(Y)]\cdot[(X_j \ | \ 1 \leq j \leq m)+(Y)+(V_i \ | \ 1 \leq i \leq d)] $$
$$ + (V_iV_j \ | \ 1 \leq i,j \leq d, \ i \neq j)+({V_i}^3 \ | \ 1 \leq i \leq d).$$
Therefore $\ell_A(A/Q)=m+2d+2$.
Thus $\e_0(Q)=\ell_A(A/Q)$ so that $A$ is a Cohen-Macaulay local ring with $\e_0(Q)=m+2d+2$.

Let $K_0=A$, $K_1=\m$, and $K_n=\m^n+y\m^{n-2}$ for $n \geq 2$, and we set $\mathcal{K}=\{K_n\}_{n \geq 0}$.
Let $\e_i(\calK)$ denote the $i$th Hilbert coefficients of the filtration $\calK$ for $0 \leq i \leq d$.

\vskip 2mm

\vskip 2mm
\begin{lem}\label{ex4}
The following assertions hold true.
\begin{itemize}
\item[(1)] $\ell_A(K_2/\m^2)=1$ and $K_n=\m^n$ for all $n \geq 3$.
\item[(2)] $Q \cap K_2=QK_1$ and $K_{n+1}=QK_n$ for all $n \geq 2$. Therefore $\e_1(\mathcal K)=\e_0(\mathcal K)-\ell_A(A/K_1)+\ell_A(K_2/Q K_1)$, $\e_2(\calK)=\ell_A(K_2/QK_1)$ if $d \geq 2$, and $\e_i(\calK)=0$ for $3 \leq i \leq d$.
\end{itemize}
\end{lem}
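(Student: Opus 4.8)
The plan is to exploit the completely explicit presentation $A = D/\a$: first I would extract from the generators of $\a$ the multiplication relations that survive in $A$, then combine them with the stated identities $\m^2 = Q\m + (v_i^2 \mid 1 \le i \le d)$, $\m^3 = Q\m^2 + Qy$, $\m^4 = Q\m^3$ to describe $\mathcal K$ explicitly, and finally feed the outcome into the filtered analogue of Corollary \ref{EV}. Write $\n = (X_j, Y, V_i, Z_i)D$ for the maximal ideal of $D$. Since every generator of $\a$ lies in $\n^2$, we have $\a \subseteq \n^2$, hence $\m/\m^2 \cong \n/\n^2$; in particular the image of $Y$ is part of a $k$-basis, so $y \notin \m^2$, and likewise $y \notin Q + \m^2$ (pass to $A/(Q + \m^2) \cong k[[X_j, Y, V_i]]/(X_j, Y, V_i)^2$). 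Next, the generator $[(X_j)+(Y)]\cdot[(X_j)+(Y)+(V_i)]$ of $\a$ yields $Y^2, X_jY, YV_i \in \a$, i.e. $y^2 = x_j y = y v_i = 0$ in $A$; therefore $y\m = y\cdot(x_j, y, v_i, a_i)A = yQ$, and since $yQ \subseteq Q\m^2 + Qy = \m^3$, multiplying by $\m^{n-3}$ gives $y\,\m^{n-2} \subseteq \m^n$ for all $n \ge 3$.

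For (1): the containment just obtained gives $K_n = \m^n + y\,\m^{n-2} = \m^n$ for every $n \ge 3$. For $n = 2$, since $y\m = yQ \subseteq \m^3 \subseteq \m^2$ we get $yA = ky + y\m \subseteq ky + \m^2$, so $K_2 = \m^2 + yA = \m^2 + ky$; thus $K_2/\m^2$ is cyclic over $k$ and is nonzero because $y \notin \m^2$, whence $\ell_A(K_2/\m^2) = 1$.

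For (2): for $n \ge 3$, multiplying $\m^4 = Q\m^3$ by $\m^{n-3}$ gives $\m^{n+1} = Q\m^n$, i.e. $K_{n+1} = QK_n$; and $QK_2 = Q(\m^2 + yA) = Q\m^2 + Qy = \m^3 = K_3$. For the intersection, write an element of $Q \cap K_2$ as $w + \lambda y$ with $w \in \m^2$ and $\lambda \in k$; if $\lambda \ne 0$ then $y \in Q + \m^2$, contradicting the above, so $\lambda = 0$ and the element lies in $Q \cap \m^2$. Since $\m$ is integrally closed, $Q \cap \m^2 = Q\m$ by \cite{H, I2}, so $Q \cap K_2 = Q\m = QK_1$.

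With $Q \cap K_2 = QK_1$ and $K_{n+1} = QK_n$ for $n \ge 2$ established, I would apply \cite[Theorem 2.2]{GR} to obtain simultaneously $\e_1(\mathcal K) = \e_0(\mathcal K) - \ell_A(A/K_1) + \ell_A(K_2/QK_1)$ and the Cohen-Macaulayness of $\rmG(\mathcal K)$; the latter, together with the fact that the reduction number of $\mathcal K$ with respect to $Q$ is $2$, forces the $h$-polynomial of $\mathcal K$ to have degree $\le 2$, giving $\e_2(\mathcal K) = \ell_A(K_2/QK_1)$ for $d \ge 2$ and $\e_i(\mathcal K) = 0$ for $3 \le i \le d$, c.f. \cite[Proposition 4.6]{HM}. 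The one place that takes genuine care is the opening step — reading the monomial relations off $\a$ and checking that $yQ$ really lies in $\m^3$; after that, (1) and (2) are routine and the coefficient statement is the quoted structure theorem.
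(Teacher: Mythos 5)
Your argument is correct, and for most of the lemma it runs parallel to the paper's (terse) proof: both get $K_n=\m^n$ for $n\ge 3$ from $y\m\subseteq\m^3$, both reduce the coefficient statements to \cite[Theorem 2.2]{GR} and \cite[Proposition 4.6]{HM} after establishing $Q\cap K_2=QK_1$ and $K_{n+1}=QK_n$ for $n\ge 2$; you also supply details the paper leaves implicit, namely $y\notin\m^2$ (via $\a\subseteq\n^2$) for $\ell_A(K_2/\m^2)=1$, and the ``routine'' verification $K_3=QK_2$, $\m^{n+1}=Q\m^n$ for $n\ge 3$. The one place where you genuinely diverge is the intersection $Q\cap K_2=QK_1$: the paper observes $K_2\m\subseteq\m^3$, hence $K_2\subseteq\widetilde{\m^2}$, and then invokes Remark \ref{tilde} to get $Q\cap\widetilde{\m^2}=Q\m$, whereas you decompose an element of $K_2$ explicitly as $w+\lambda y$ with $w\in\m^2$, $\lambda\in k$, rule out $\lambda\neq 0$ by $y\notin Q+\m^2$, and finish with the Huneke--Itoh equality $Q\cap\m^2=Q\m$ for the integrally closed ideal $\m$ \cite{H,I2}. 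Both routes rest on the same integral-closure input; yours is more hands-on and leans on the explicit presentation of $A$ (in particular on $y\m=yQ\subseteq\m^3$, so that $K_2=\m^2+ky$), while the paper's Ratliff--Rush detour is shorter and works for any filtration squeezed between $\m^2$ and $\widetilde{\m^2}$, which is the mechanism it reuses in Theorem \ref{B_+}. Either way the hypotheses of \cite[Theorem 2.2]{GR} are met and the remaining coefficient computations follow exactly as you say.
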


\begin{proof}
(1) Since $K_2=\m^2+(y)$, we have $\ell_A(K_2/\m^2)=1$.
We have $K_n=\m^n+y\m^{n-2}=\m^n$ for all $n \geq 3$, because $y\m=(yv_i \ | \ 1 \leq i \leq d)= ({v_i}^3 \ | \ 1 \leq i \leq d) \subseteq \m^3$.

$(2)$ Since $K_n=\m^n$ for all $n \geq 3$ by assertion $(1)$, we have $K_2 \subseteq \widetilde{\m^2}$. 
Therefore $Q \cap K_2 \subseteq Q \cap \widetilde{\m^2}=Q\m=QK_1$ by Remark \ref{tilde}.
It is routine to check that $K_{n+1}=QK_n$ for all $n \geq 2$.
Thus $\e_1(\mathcal K)=\e_0(\mathcal K)-\ell_A(A/K_1)+\ell_A(K_2/Q K_1)$ by \cite[Theorem 2.2]{GR}.
We also have $\e_2(\calK)=\ell_A(K_2/QK_1)$ if $d \geq 2$, and $\e_i(\calK)=0$ for $3 \leq i \leq d$ by \cite[Proposition 4.6]{HM}.
\end{proof}

We prove now  Theorem \ref{ex2}.

\begin{proof}[Proof of Theorem \ref{ex2}]
Since $K_n=\m^n$ for all $n \geq 3$ by Lemma \ref{ex4} (1), we have $\e_i(\mathcal K)=\e_i(\m)$ for $0 \leq i \leq d$.
Therefore, $\e_1(\m)=\e_0(\m)+\ell_A(\m^2/Q\m)$, $\e_2(\m)=\ell_A(\m^2/Q\m)+1$ if $d \geq 2$, and $\e_i(\m)=0$ for all $3 \leq i \leq d$, because $\ell_A(K_2/\m^2)=1$, $\e_1(\mathcal K) = \e_0(\mathcal K)+\ell_A(K_2/QK_1)-1$, $\e_2(\calK)=\ell_A(K_2/QK_1)$ if $d \geq 2$, and $\e_i(\calK)=0$ for $3 \leq i \leq d$ by Lemma \ref{ex4}.
Then we have $\e_1(\m)=m+3d+2$ and $\e_2(\m)=d+1$ because $\e_0(\m)=m+2d+2$ and $\ell_A(\m^2/Q\m)=d$.
The ring $\rmG(\m)$ is Buchsbaum ring with $\depth \rmG(\m)=0$ and $\Bbb{I}(\rmG(\m))=d$ by Theorem \ref{B_+}.
This completes the proof of Theorem \ref{ex2}.
\end{proof}



\end{document}